\documentclass[12pt]{article}
\input{my_header.sty}
\usepackage{geometry}
\theoremstyle{plain}
\newtheorem{theorem}{Theorem}[section]
\newtheorem{lemma}[theorem]{Lemma}
\newtheorem{claim}[theorem]{Claim}
\newtheorem{corollary}[theorem]{Corollary}
\newtheorem{proposition}[theorem]{Proposition}

\newtheorem{example}[theorem]{Example}

\theoremstyle{definition}
\newtheorem{definition}[theorem]{Definition}
\newtheorem{remark}[theorem]{Remark}

\newtheorem{observation}[theorem]{Observation}

\numberwithin{equation}{section}

\usepackage{authblk}
\renewcommand*{\Affilfont}{\normalsize\normalfont}

\title{Planted clique detection and recovery from the hypergraph adjacency matrix\footnote{Supported by the Vilho, Yrj\"{o} and Kalle V\"{a}is\"{a}l\"{a} Foundation of the Finnish Academy of Science and Letters.}}
\author[1]{Kalle Alaluusua}
\author[2]{B. R. Vinay Kumar}
\affil[1]{ Aalto University, Espoo, Finland}
\affil[ ]{\Affilfont \href{mailto:kalle.alaluusua@aalto.fi}{kalle.alaluusua}@aalto.fi}
\affil[3]{Indian Institute of Technology Bombay (IITB), Mumbai, India}
\affil[ ]{\Affilfont \href{mailto:vinaykumar.br@iitb.ac.in}{vinaykumar.br@iitb.ac.in}}
\date{14 April 2026}
\renewcommand\Affilfont{\itshape\small}

\begin{document}
\maketitle
\begin{abstract}
Hypergraph data are often projected onto a weighted graph by constructing an adjacency matrix whose $(i,j)$ entry counts the number of hyperedges containing both nodes $i$ and $j$. This reduction is computationally convenient, but it can lose information: distinct hypergraphs may induce the same matrix, and the matrix entries are generally dependent because each hyperedge contributes to multiple pairs. We study the planted clique problem under this matrix-only observation model. For detection, we show that a spectral norm test is asymptotically powerful at the $\sqrt{n}$ scale, with explicit dependence on the background hyperedge probability $p$. For recovery, we analyze a polynomial-time spectral method based on the leading eigenvector and prove exact recovery at the canonical $\sqrt{n}$ scale, again with explicit dependence on $p$. We also extend both results to sparse regimes in which the hyperedge probability may depend on \(n\). Our analysis adapts a leave--one--out eigenvector framework to this setting. These results provide rigorous detection and recovery guarantees when only the adjacency matrix is observed.
\end{abstract}
\tableofcontents

\section{Introduction}
Hypergraphs provide a natural representation of higher-order relations among individual elements. Such higher-order structure appears in many networked datasets, including protein--protein interaction networks, brain networks, and citation networks~\cite{Battiston2021Physics,bick2023higher}. Formally, a hypergraph $H=(V,E)$ consists of a vertex set $V$ and a hyperedge set $E$, where each hyperedge is a subset of $V$.  A \emph{$d$-uniform hypergraph} on the vertex set $V=[n] := \{1,\dots,n\}$ can be represented by the collection
\[
H=(H_e)_{e\in \binom{[n]}{d}}, 
\qquad 
H_e \in \{0,1\},
\]
indexed by unordered sets of $d$ elements from $V$, where $H_e=1$ if and only if $e\in E$. Equivalently, one may view $(H_e)$ as a symmetric $d$-way $\{0,1\}$ array on $[n]^d$ by setting $H_{i_1,\ldots,i_d}=H_{\{i_1,\ldots,i_d\}}$ when the indices are distinct, and $H_{i_1,\ldots,i_d}=0$ otherwise. We refer to this representation as the \emph{adjacency tensor} of $H$.

Working directly with a hypergraph is often computationally expensive and memory-intensive. Common data representations of a $d$-uniform hypergraph on $n$ vertices include an $n\times \binom{n}{d}$ incidence matrix or a list of hyperedges, which can be wasteful in the sparse and dense regimes, respectively. A common way to obtain a simpler lower-order summary is to replace the hypergraph by a weighted graph via a clique expansion, or more generally a graph projection~\cite{boccaletti2023structure}. We refer to the resulting $n\times n$ matrix as the \emph{adjacency matrix} of the hypergraph $H$. It is also commonly called the similarity matrix or node co-occurrence matrix, since its $(i,j)$th entry records the number of hyperedges in which nodes $i$ and $j$ co-occur. In particular, we define the adjacency matrix $A$ of $H$ by
\begin{equation} 
A_{ij}\coloneqq \sum_{e:\,\{i,j\}\subset e} H_e, \quad \text{for $i\neq j$, and}\quad
A_{ii}=0.
\label{eq:def_adjacency_matrix}
\end{equation}
This reduction yields a pairwise matrix summary that supports spectral methods operating directly on $A$. Such methods can remain consistent even when the underlying hyperedges are strongly dependent, as in geometry-driven higher-order constructions~\cite{alaluusua2025consistent}.

That said, passing from the hypergraph to its adjacency matrix may discard information about the underlying higher-order structure. In particular, several hypergraphs can have the same adjacency matrix \cite{larock2025exploring} as in Example~\ref{ex:same_adj_mat} below.
\begin{example}
     On the vertex set $V=\{1,2,\cdots,8\}$, consider two $4$-uniform hypergraphs with edge sets $$E_1=\{
    \{1,2,3,5\},
    \{1,2,4,6\},
    \{3,6,7,8\},
    \{4,5,7,8\}\}
    $$
    and
    $$
    E_2=\{
    \{1,2,3,6\},
    \{1,2,4,5\},
    \{3,5,7,8\},
    \{4,6,7,8\}\},$$ respectively. Both $H_1=(V,E_1)$ and $H_2=(V,E_2)$ have the same adjacency matrix 
    $$
A =
\begin{bmatrix}
0 & 2 & 1 & 1 & 1 & 1 & 0 & 0 \\
2 & 0 & 1 & 1 & 1 & 1 & 0 & 0 \\
1 & 1 & 0 & 0 & 1 & 1 & 1 & 1 \\
1 & 1 & 0 & 0 & 1 & 1 & 1 & 1 \\
1 & 1 & 1 & 1 & 0 & 0 & 1 & 1 \\
1 & 1 & 1 & 1 & 0 & 0 & 1 & 1 \\
0 & 0 & 1 & 1 & 1 & 1 & 0 & 2 \\
0 & 0 & 1 & 1 & 1 & 1 & 2 & 0
\end{bmatrix}.
$$
\label{ex:same_adj_mat}
\end{example}
This makes it non-trivial to infer properties of the hypergraph from the adjacency matrix alone. Nevertheless, 
\cite{bresler2024thresholds,bresler2025partial} study recovery of a hypergraph from its graph projection in the $d$-uniform setting; in particular, \cite{bresler2025partial} establishes exact and partial recovery when hyperedges are observed with probability $p=O(n^{-(d-1-\delta)})$, where $\delta>0$ depends only on $d$. In a more general setting, \cite{morgan2026achievability} studies a heterogeneous random hypergraph model with hyperedges of multiple degrees and proves an achievability result for recovering degree-$d_j$ hyperedges from the projected graph by selecting maximal cliques of size $d_j$, with success measured by requiring an expected symmetric-difference error to be negligible relative to the number of true degree-$d_j$ hyperedges. Their density condition extends the $d$-uniform threshold, and they conjecture that it is optimal for recovering hyperedges of the largest degree. More broadly, recent work develops quantitative criteria for when higher-order models can be reduced to lower-order representations while preserving task-relevant information, highlighting the trade-off between descriptive power and model complexity \cite{dumitriu2026optimal,dumitriu2025partial,Lucas2026Reducibility,valimaa2025consistent}.

In this work, we study how much statistical information is lost when a hypergraph is observed only through its adjacency matrix. As a canonical example, we consider the planted clique problem in a hypergraph under this projected observation model. We begin by defining the statistical model and the inference tasks of interest.

\begin{definition}[Hypergraph planted clique model]\label{def:HPC}
Fix $d\ge 3$, $0 \le k \le n$, and $p\in[0,1]$. 
Sample $S\subset[n]$ uniformly among all subsets of $[n]$ of size $k$, and independent random variables \(\{H_e\}_{e\in\binom{[n]}{d}}\) such that
\[
\P(H_e = 1) \weq 
\begin{cases}
1, & e\subset S,\\
p, & e\not\subset S.
\end{cases}
\]
Construct the adjacency matrix \(A \in \R^{n\times n}\) as
\[
A_{ij}\coloneqq \sum_{e:\,\{i,j\}\subset e} H_e, \quad \text{for $i\neq j$, and}\quad
A_{ii}=0.
\]
We then write \((A,S)\sim \HPC(n,d,k,p)\). For each \(k\), let \(\P_k\) denote the marginal law of \(A\) under \(\HPC(n,d,k,p)\), that is, after averaging over the uniform choice of \(S\).
\end{definition}

\begin{remark}
When $k=0$, the model reduces to the Erd\H{o}s--R\'enyi $d$-uniform hypergraph in which each \emph{$d$-set} $e\in\binom{[n]}{d}$ is present independently with probability $p$, and we denote this null model by
$\ER(n,d,p) \coloneqq \HPC(n,d,0,p)$, with corresponding law $\P_0$.
Moreover, $\P_k=\P_0$ for every $0\le k<d$ as there are no $d$-sets contained in $S$ given $|S| < d$. 
\end{remark}
In this work, we consider the planted clique detection and recovery problem given only the adjacency matrix projection of the hypergraph.

\begin{itemize}
    \item \textbf{Detection:} The detection problem is formulated as a composite hypothesis testing problem:
    \begin{equation}
    \mathcal{H}_0: k=0 \quad \text{vs.} \quad \mathcal{H}_1: k \ge k_0,
    \label{eq:hyp_test_prob}
    \end{equation}
    where $k_0$ represents the critical threshold for the distinguishability of the two hypotheses based on the observed adjacency matrix $A$ of the hypergraph. Formally, let \(\mathcal A\) denote the collection of all \(n\times n\) symmetric matrices with zero diagonal that are obtained as adjacency matrices of \(d\)-uniform hypergraphs on \([n]\). A sequence of test functions $\{\varphi_n\}_{n \ge 1}$, where $\varphi_n \colon \mathcal{A} \to \{0, 1\}$, is said to be \emph{asymptotically powerful} if the total risk $R(\varphi_n)$ vanishes as $n \to \infty$:
    \begin{equation}
    R(\varphi_n) \coloneqq \mathbb{P}_0(\varphi_n(A)=1) + \sup_{k\ge k_0} \mathbb{P}_k(\varphi_n(A)=0) \xrightarrow[n \to \infty]{} 0.
    \label{eq:risk}
    \end{equation}
    The objective is to characterize the regime of $k_0 = k_0(n)$ for which such a sequence of tests exists.

    \item \textbf{Recovery:} The recovery problem is the corresponding estimation task, typically considered in the regime where the presence of a planted structure is guaranteed (i.e., under $\mathcal{H}_1$). Given an observation of the adjacency matrix $A$ sampled from the distribution $\text{HPC}(n, d, k, p)$, where the clique size $k$ is a known parameter, the goal is to identify the latent vertex set $S \subset [n]$. 
    Formally, we seek a sequence of estimators $\{\hat{S}_n\}_{n \ge 1}$, where $\hat{S}_n \colon \mathcal{A} \to \binom{[n]}{k}$ is a function of the adjacency matrix. We say that an estimator achieves \emph{exact recovery} if the probability of misclassification vanishes asymptotically:
    \begin{equation}
    \label{eq:exact_recovery}
    \lim_{n \to \infty} \mathbb{P}_k(\hat{S}_n = S) = 1.
    \end{equation}
    The recovery problem aims to determine the information-theoretic and computational boundaries on $k$ that allow for the consistent identification of the planted support $S$.
\end{itemize}
The paper is organized as follows: Section~\ref{sec:related-work} reviews prior work on planted structures in hypergraphs and related higher-order models. Section~\ref{sec:main} states our main results on detection, exact recovery, and the sparse regime. Section~\ref{sec:discussion} discusses the proof strategy and compares our guarantees with tensor-based benchmarks. Section~\ref{sec:proofs} contains the proofs of the main theorems: Section~\ref{sec:proof_detection} proves the detection result, and Section~\ref{sec:proof_recovery} proves the recovery result. Appendix~\ref{sec:concentration_inequalities} collects the perturbation and concentration tools used throughout the paper. We end this section with a summary of the notation used in the rest of the article.

\subsection{Our contributions}

In this work, we study the planted clique problem under an observation model in which \emph{only} the hypergraph adjacency matrix is available. In contrast, existing recovery results for planted hypergraph clique or dense-subgraph problems typically assume access to the full hypergraph, or equivalently its order-\(d\) adjacency tensor. These works are discussed in Section~\ref{sec:related-work}. As discussed in the start of this section, the adjacency matrix projection loses information relative to the full order-\(d\) adjacency tensor, which makes the present setting more difficult. Our main contributions are:
\begin{itemize}
    \item Detection: We propose a test based on the spectral norm of the adjacency matrix \(A\) and show that it is asymptotically powerful when
    \[
    k_0 \;\gtrsim\; \left(\frac{p}{(1-p)^2}\right)^{\frac1{2(d-1)}}\sqrt n.
    \]
    \item Recovery: We propose a polynomial-time spectral method based on the leading eigenvector of \(A\) and prove exact recovery when
    \[
    k \;\gg\; \left(\frac{p}{1-p}\right)^{\frac1{2(d-1)}}\sqrt n.
    \]
    \item Sparse regime: Both results continue to hold when \(p=p_n\), under explicit sparsity assumptions; in particular, \(p_n\gtrsim n^{-(d-1)}\log n\) for detection and \(p_n\gtrsim n^{-(d-1)}\log^c n\) for recovery.
\end{itemize}

\paragraph{Notation.}
Write $[n]\coloneqq\{1,\dots,n\}$. For a set $e$, $|e|$ denotes its cardinality. A $d$-set is a set of size $d$, and $\binom{S}{d}$ denotes the family of all $d$-subsets of $S\subseteq[n]$.

All asymptotics are as $n\to\infty$. For positive sequences $(a_n)$ and $(b_n)$, we write $a_n=o(b_n)$ if $a_n/b_n\to0$, $a_n=O(b_n)$ if $\limsup_{n\to\infty}a_n/b_n<\infty$, $a_n=\Omega(b_n)$ if $b_n=O(a_n)$, and $a_n\asymp b_n$ if both $a_n=O(b_n)$ and $a_n=\Omega(b_n)$ hold. We also write $a_n\gg b_n$ for $b_n=o(a_n)$, and $a_n \gtrsim b_n$ for $b_n=O(a_n)$.
For $u,v\in\R^n$, write $\langle u, v\rangle = u^\top v$, $\|v\|_2\coloneqq(\sum_{i=1}^n v_i^2)^{1/2}$ and $\|v\|_\infty\coloneqq \max_{i\in[n]}|v_i|$. For $M\in\R^{n\times n}$, let $M_{i:}\in\R^{1\times n}$ denote its $i$th row, and write 
\[
\|M_{i:}\|_2\coloneqq\left(\sum_{j=1}^n M_{ij}^2\right)^{1/2}.
\] 
We write $\|M\|\coloneqq \sup_{\|x\|_2=1}\|Mx\|_2$ for the spectral norm and
\[
\|M\|_{2\to\infty}\coloneqq \sup_{\|x\|_2=1}\|Mx\|_\infty=\max_{i\in[n]}\|M_{i:}\|_2
\]
for the $2\to\infty$ norm. The trace of a matrix $M$ is denoted by $\tr(M)$.

Let $\1_S\in\{0,1\}^n$ be the indicator vector of $S\subset[n]$, and let $J$ and $I$ denote the all-ones and identity matrices of the appropriate dimension. Finally, for symmetric $A\in\R^{n\times n}$, the notation $A\succeq0$ means that $A$ is positive semidefinite, i.e., $x^\top A x\ge0$ for all $x\in\R^n$.

\section{Related work }
\label{sec:related-work}

The hypergraph planted clique problem is a natural extension of the planted clique (PC) and planted dense subgraph (PDS) problems on graphs. PC and PDS have been investigated in \cite{Alon1998Finding,chen2022metropolis,gamarnik2024landscape,hajek2017information,hirahara2024pcEquivalence,mardia2024lowdegree} and the references therein. More recently, PC has also been studied beyond the classical Erd\H{o}s--R\'enyi graph setting, for example in weighted graphs and random geometric graphs \cite{Chatterjee2025Detecting,avrachenkov2025planted}. In this section, we restrict the literature review to works that address planted structures in hypergraphs and closely related higher-order models. In the hypergraphic planted dense subgraph problem (HPDS), one selects a planted vertex set and then samples hyperedges with higher probability within the set than outside; planted cliques correspond to the special case in which all hyperedges within the set are present.

The problem of characterizing the size of the largest clique in a random hypergraph establishes an information-theoretic baseline for detection. In the foundational work \cite{bollobasCliquesRandomGraphs1976}, the authors analyze the clique number \(\omega(H)\) of an Erd\H{o}s--R\'enyi \(d\)-uniform hypergraph \(H\sim \ER(n,d,p)\), and show that \(\omega(H)\) concentrates around
\[
(1+o(1))\left(\frac{d!\log n }{\log (1/p)}\right)^{1/(d-1)}.
\]
More recently, \cite{yuan_shang_2022_detection} establish that if the planted clique size is below this scale, detection is impossible even information-theoretically. Moreover, they give sharp information-theoretic thresholds in the HPDS setting when the edge probabilities are unknown. In a similar direction, \cite{corinzia2022statistical} studies a HPDS problem in a weighted/Gaussian tensor model and determines an information-theoretic threshold for exact recovery as a function of a planted subset size-normalized ratio. Taking the algorithmic perspective, \cite{guruswamiBypassingXORTrick2022} studies polynomial-time spectral/SDP methods for certifying upper bounds on the clique number in Erd\H{o}s--R\'enyi \(d\)-uniform hypergraphs. For \(H\sim \ER(n,d,p)\) with fixed \(d\) and constant \(p\in(0,1)\), they give a certificate \(\omega_{\mathrm{alg}}(H)\) satisfying \(\omega(H)\le \omega_{\mathrm{alg}}(H)\) for every input and
\[
\omega_{\mathrm{alg}}(H)\;\lesssim\; \sqrt{n} \log^{c_d} n
\]
with high probability. Such certification results provide a natural algorithmic baseline for detection under the full-hypergraph observation model.

A related work \cite{zhangTensorSVDStatistical2020} considers a hypothesis testing problem in which the planted clique is assumed to lie in one of two prescribed halves of the vertex set. They observed that detection using a tensor SVD-based approach succeeds whenever $k = \Omega(\sqrt{n})$. Further evidence for the \(\sqrt n\) scale comes from \cite{luo2022tensor}, which shows failure of the low-degree polynomial method for planted hyperclique detection when \(k=o(\sqrt n)\). A matching lower bound for the Sum-of-Squares/Lasserre hierarchy, the hypergraph analogue of the basic SDP relaxation in the graph case, remains open \cite{kothari2026improved,LuoZhang2020OpenProblem}.

In the recovery problem, the task is to identify the planted vertex set rather than merely determine its presence. The work \cite{luo2022tensor} provides a broad overview of the statistical and computational limits of planted high-order models, covering both detection and recovery. It also relates the two tasks: \cite[Lemma~1]{luo2022tensor} shows that a recovery lower bound at size \(k\) implies a detection lower bound at an asymptotically equal scale. Concretely, if no randomized polynomial-time algorithm can recover the planted clique under \(\HPC(n,d,1/2)\) with probability at least \(1-1/n\), then no randomized polynomial-time algorithm can distinguish the null from the planted model with planted size reduced by a constant factor with success probability at least \(1-1/(4n^d)\).

In the hypergraphic PDS framework, \cite[Algorithm~6]{luo2022tensor} proposes a polynomial-time mode-wise aggregation scheme for exact recovery, and \cite[Proposition~1]{luo2022tensor} provides sufficient conditions under which that procedure succeeds. As the closest point of comparison to our recovery result under full tensor observations, we briefly outline their adjacency-tensor-based approach. The method first partitions the vertices into disjoint blocks and then uses these blocks to construct a rescaled subtensor, such that each tensor mode aligns with a distinct block. It then contracts along the remaining \((d-2)\) modes to obtain a collection of two-mode matrices, applies an SVD to each, and uses the leading singular vectors to extract the vertices most aligned with the planted structure within each block. Finally, the vertex sets recovered from all subtensors are combined to obtain the estimated planted set. In particular, \cite[Proposition~1]{luo2022tensor} gives sufficient conditions for exact recovery in the hypergraphic planted dense subgraph model, observed through the full adjacency tensor. When specialized to the planted clique setting, these conditions imply exact recovery at the canonical \(\sqrt n\) scale, namely
\begin{equation}
k \;\gtrsim\; \left(\frac{p}{1-p}\right)^{\frac{1}{2(d-1)}}\sqrt{n}.
\label{eq:recovery_benchmark}
\end{equation}
They further conjecture that this \(\sqrt n\)-scale boundary is optimal for polynomial-time exact recovery \cite[Conjecture~2]{luo2022tensor}.

At the same time, work on hypergraphic PDS in the sparse (log-density) regime points to a separation between detection and exact recovery in higher-order settings. In particular, \cite{dhawan2025dense} studies detection of a planted subhypergraph in a sparse Bernoulli \(d\)-uniform model and uses the low-degree method to characterize regimes of polynomial-time success and failure. Their results support the view that detection can be feasible at weaker signal levels than exact recovery.

\section{Main results}
\label{sec:main}

This section states our main guarantees for the detection and exact recovery problems in the hypergraph planted clique model. Both results are formulated in terms of the centered adjacency matrix
\[
M \;\coloneqq\; A-\E_0[A],
\]
where $A$ is the adjacency matrix associated with the observed hypergraph, constructed as in \eqref{eq:def_adjacency_matrix}, and \(\E_0[A]\) is its expectation under the null model, namely the symmetric matrix with zero diagonal and constant off-diagonal entries \(\binom{n-2}{d-2}p\).

\subsection{Detection}

We first consider the testing problem with the test statistic being the operator norm of $M$. The next theorem gives a sufficient condition under which a spectral norm threshold test is asymptotically powerful.

\begin{theorem}\label{thm:detection}
Fix $d\ge 3$ and $p\in(0,1)$. There exists a constant $C_d>0$ such that the sequence of tests
$\{\varphi_n\}_{n\ge1}$ defined by
\begin{equation}\label{eq:spectral_test}
\varphi_n(A)
\;\coloneqq\;
\1\left\{\|A-\E_0[A]\| > C_d \sqrt{ n\binom{n-2}{d-2}p }\right\}
\end{equation}
is asymptotically powerful for the testing problem \eqref{eq:hyp_test_prob} whenever
\begin{equation}\label{eq:k0_def_clean}
k_0(n) \;>\; (2C_d)^{\frac1{d-1}}
\left(\frac{p}{(1-p)^2}\right)^{\frac{1}{2(d-1)}}\sqrt n,
\end{equation}
\end{theorem}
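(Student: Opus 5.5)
The proof splits into two halves: control of the null (type I error) and a lower bound under the alternative (type II error). Throughout write $N\coloneqq\binom{n-2}{d-2}$ and $\sigma^2\coloneqq nNp$.

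\emph{Null hypothesis.} Under $\P_0$ we have
\[
M=\sum_{e\in\binom{[n]}{d}}(H_e-p)\,B_e,
\]
where $B_e\coloneqq \1_e\1_e^\top-\mathrm{diag}(\1_e)$. This is a sum of independent, centered, bounded random matrices with $\|B_e\|\le d-1$. The variance proxy is
\[
\Big\|\sum_e \E(H_e-p)^2B_e^2\Big\| \asymp p(1-p)\binom{n-1}{d-1}\asymp \frac{p(1-p)}{d-1}\,nN,
\]
since each row of $\sum_e B_e^2$ sums to roughly $(d-1)\binom{n-1}{d-1}+\dots$. Matrix Bernstein then gives $\|M\|\le C\sqrt{nNp\log n}$, which carries an extra $\sqrt{\log n}$ that we cannot afford. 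To remove it, I would use a sharper tool from the appendix, either a Bandeira--van Handel type bound for matrices with dependent entries or a direct moment/trace method $\E\tr(M^{2m})$ with $m\asymp\log n$. In the dense regime with $p$ fixed and $N\to\infty$, the entries $M_{ij}$ are sums of many independent pieces, so the Gaussian-like behavior should give $\|M\|\le C_d\sigma/2$ with high probability. A more elementary alternative is an $\epsilon$-net argument: for fixed unit $x$, $x^\top Mx=\sum_e(H_e-p)\big((\1_e^\top x)^2-\|x_e\|^2\big)$ is a sum of independent terms with variance $\lesssim p\,N n\,\|x\|^4$ and increments bounded by $d^2\max|x_i|^2$. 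Bernstein plus a union bound over a $1/4$-net of size $9^n$ gives deviations $\sqrt{pNn\cdot n}$, which is too large by a factor $\sqrt n$. So the net argument fails, and I would commit to the trace method or a known universality bound, choosing $C_d$ so that $\P_0(\|M\|>C_d\sigma)\to0$. \textbf{This step is the main obstacle}: getting the constant-factor $\sqrt{nNp}$ bound without logarithmic loss, including the handling of dependence created by a shared hyperedge.

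\emph{Alternative hypothesis.} Fix $k\ge k_0$ and condition on $S$. Decompose $A=A^{(S)}+A'$, where $A^{(S)}$ counts the hyperedges inside $S$ (deterministically present) and $A'$ counts the hyperedges $e\not\subset S$, which are independent Bernoulli$(p)$. Then
\[
M=\big(A^{(S)}-\E_0A^{(S)}\big)+\big(A'-\E A'\big)+\big(\E A'-\E_0[A]+\E_0A^{(S)}\big).
\]
Regrouping, $\E_k[A\mid S]-\E_0[A]$ has $(i,j)$ entry $(1-p)\binom{k-2}{d-2}$ for $i,j\in S$ and $0$ otherwise. Hence
\[
\|M\|\ge \frac{\1_S^\top M\1_S}{k}\ge (1-p)\binom{k-2}{d-2}(k-1)-\|A'-\E A'\|.
\]
The noise term $A'-\E A'$ is dominated by the null-type bound, since it is the same sum restricted to $e\not\subset S$ with the same parameters; the argument of the first part applies verbatim and gives $\|A'-\E A'\|\le C_d\sigma/2$ whp, uniformly in $S$.

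\emph{Conclusion.} It remains to show
\[
(1-p)\binom{k-2}{d-2}(k-1)\ge \tfrac32 C_d\sigma.
\]
Using $\binom{k-2}{d-2}(k-1)\approx k^{d-1}/(d-2)!$ and $N\approx n^{d-2}/(d-2)!$, the condition becomes
\[
(1-p)\,k^{d-1}\gtrsim C_d\sqrt{p}\,n^{(d-1)/2}\sqrt{(d-2)!},
\]
that is, $k\gtrsim C_d^{1/(d-1)}\big(p/(1-p)^2\big)^{1/(2(d-1))}\sqrt n$. This matches \eqref{eq:k0_def_clean}, with the factor $2$ absorbing the lower-order binomial corrections and the constant $(d-2)!$ (enlarging $C_d$ if needed). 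Since the bound is monotone in $k$, the supremum over $k\ge k_0$ is handled by the single uniform noise bound.
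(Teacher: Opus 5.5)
Your plan is correct, and its type I half matches the paper's. The log-free bound you flag as the main obstacle is not proved in the paper either; it is imported from \cite[Theorem~4]{lee_kim_chung_2020}. That result gives $\|A-\E[A]\|\le C_{\alpha,d}\sqrt{n\binom{n-2}{d-2}p}$ with probability at least $1-4n^{-11}$ whenever $p\ge\alpha\log n/(n\binom{n-2}{d-2})$. Its proof uses only independence, boundedness, $\E X_e\le p$ and the pair-overlap count, so it also applies to the masked family $H_e\1\{e\not\subset S\}$. That masked version is exactly the bound on $\|A'-\E A'\|$ your alternative step needs (Corollary~\ref{cor:lkc_mod}). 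So your ``verbatim'' step is justified.

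The type II step is where you genuinely differ. You bound $\|M\|\ge\langle u_S,Mu_S\rangle\ge\lambda^*-\|M-M^*\|$ with $\lambda^*=(1-p)(k-1)\binom{k-2}{d-2}$. This buys three things: one operator-norm bound controls both errors; monotonicity of $\lambda^*$ in $k$ replaces the paper's coupling; and the type II error is polynomially small uniformly over $k\ge k_0$. The paper instead couples to a subset $T\subset S$ with $|T|=k_0$ and applies scalar Bernstein to the fixed-direction form $\langle u_T,(M-M^*)u_T\rangle$. That fluctuation is only $O(\sqrt{p\binom{n-2}{d-2}})=o(t_1)$, so the signal has to beat the threshold $t_1$ alone. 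In your version it must beat $t_1$ plus a full noise term of order $\sqrt{n\binom{n-2}{d-2}p}$. Both routes give the $\sqrt n$ scale with the same $p$-dependence; the paper's gives the better constant.

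One correction to your last step. $C_d$ multiplies both the test threshold and the required size of $k_0^{d-1}$, so enlarging it cannot absorb the factor $\sqrt{(d-2)!}$. That factor appears because the signal carries $1/(d-2)!$, while $\sqrt{n\binom{n-2}{d-2}p}$ carries only $1/\sqrt{(d-2)!}$. What your argument actually proves, for any $C_d\ge C_{\alpha,d}$, is asymptotic power under
$k_0^{d-1}\ge(1+\epsilon)(C_d+C_{\alpha,d})\sqrt{(d-2)!}\,\frac{\sqrt p}{1-p}\,n^{(d-1)/2}$.
With your particular split (noise $\le C_d\sigma/2$, signal $\ge\frac32 C_d\sigma$), the displayed hypothesis implies this only when $d=3$. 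The paper's own proof also carries $\sqrt{(d-2)!}$ inside its $\kappa(d,p)$. So this constant-level mismatch with the displayed statement is shared by both arguments, not a flaw specific to yours.
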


Theorem~\ref{thm:detection} provides a sufficient condition at the canonical $\sqrt n$ scale, with explicit dependence on $p$ given by \eqref{eq:k0_def_clean}.

\subsection{Exact recovery}

We now turn to the recovery problem. The estimator is spectral: it computes a leading eigenvector of the centered adjacency matrix and selects the $k$ coordinates with the largest magnitude. 

\begin{algorithm}[H]
\small
\algrenewcommand\algorithmicrequire{\textbf{Input:}}
\algrenewcommand\algorithmicensure{\textbf{Output:}}
\caption{Spectral method for planted-set recovery}
\begin{algorithmic}[1]
\Require Adjacency matrix $A$, set size $k$, edge size $d$
\Ensure Estimate $\hat S\subset[n]$ with $|\hat S|=k$
\State Let $u$ be a unit eigenvector corresponding to the largest eigenvalue of $A-\E_0[A]$.
\State Output $\hat S$ as the indices of the $k$ largest entries of $(|u_i|)_{i=1}^n$ (ties broken arbitrarily).
\end{algorithmic}
\label{alg:spectral_recovery_sim}
\end{algorithm}

The next theorem provides a sufficient condition for exact recovery in the sense of \eqref{eq:exact_recovery} under the adjacency matrix observation model.

\begin{theorem}[Spectral recovery]\label{thm:recovery_sim}
Fix $d\ge 3$ and $0<p<1$.
Let $(A,S)$ be sampled from \(\HPC(n,d,k(n),p)\). If
\begin{equation}\label{eq:recovery_thm_scaling}
k(n)\;\gg\;
\left(\frac{p}{1-p}\right)^{\frac{1}{2(d-1)}}\sqrt n.
\end{equation}
then Algorithm~\ref{alg:spectral_recovery_sim} with inputs $A$, $k(n)$ and $d$ achieves exact recovery.
\end{theorem}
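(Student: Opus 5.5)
Our approach is to decompose the centered matrix as signal plus noise, $M = A-\E_0[A] = \E_k[A\mid S]-\E_0[A] + W$ with $W \coloneqq A-\E_k[A\mid S]$, and to show that the leading eigenvector of $M$ is entrywise close to a vector that separates $S$ from $S^c$ with a margin. Conditionally on $S$, for $i\ne j$ both in $S$ the mean excess is $(1-p)\binom{k-2}{d-2}$. If exactly one of $i,j$ lies in $S$, or if neither does, the excess is zero, since no hyperedge inside $S$ contains that pair. Hence the signal is
\[
\Delta \coloneqq (1-p)\tbinom{k-2}{d-2}\bigl(\1_S\1_S^\top - \mathrm{diag}(\1_S)\bigr),
\]
which is rank one up to a diagonal correction of norm $(1-p)\binom{k-2}{d-2}$. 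Its top eigenvalue is $\lambda^\star\approx (1-p)k^{d-1}/(d-2)!$ with eigenvector $u^\star=\1_S/\sqrt k$. The noise $W=\sum_{e\not\subset S}(H_e-p)\,\1_e\1_e^\top$ (off-diagonal part) is a sum of independent centered rank-one-type matrices.

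\textbf{Step 1 (spectral norm of the noise).} We would bound $\|W\|\lesssim \sqrt{n\binom{n-2}{d-2}p(1-p)}\asymp \sqrt{p(1-p)}\,n^{(d-1)/2}$ with high probability. We would reuse the concentration machinery behind Theorem~\ref{thm:detection}, namely matrix Bernstein in Appendix~\ref{sec:concentration_inequalities}. Each summand has norm at most $d$, and the variance proxy $\|\sum_e \mathrm{Var}(H_e)(\1_e\1_e^\top)^2\|$ is of order $n^{d-1}p(1-p)$, so matrix Bernstein gives the bound up to a $\sqrt{\log n}$ factor. Under \eqref{eq:recovery_thm_scaling} we have $k^{d-1}\gg (p/(1-p))^{1/2} n^{(d-1)/2}$, so $\lambda^\star(1-p)^{-1}\gg \sqrt{p/(1-p)}\,n^{(d-1)/2}$, i.e.\ $\lambda^\star\gg\|W\|$. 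The $\sqrt{\log n}$ loss is absorbed because the hypothesis is stated with $\gg$, provided we either use a sharper bound without the log (for instance Latała/Bandeira--van Handel type bounds, since entries are bounded and the variance is spread) or track it. We would try the sharp route first, and otherwise accept a $\log$ factor. Davis--Kahan then gives $\min_{\pm}\|u\mp u^\star\|_2\lesssim \|W\|/\lambda^\star=o(1)$.

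\textbf{Step 2 (entrywise control via leave-one-out).} Exact recovery needs $\|u-u^\star\|_\infty = o(1/\sqrt k)$, since the entries of $u^\star$ are $1/\sqrt k$ on $S$ and $0$ off $S$. We would use the leave-one-out eigenvector framework alluded to in the abstract. Writing $u_i=(Mu)_i/\lambda$ gives
\[
u_i-\frac{(\Delta u)_i}{\lambda}=\frac{(Wu)_i}{\lambda},
\]
and the task is to bound $|W_{i:}u|$. For each $m$ we define $W^{(m)}$ by deleting every hyperedge containing $m$. This is the key adaptation, because an entry $A_{ij}$ depends on all hyperedges through $i,j$, so zeroing a row alone does not give independence. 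We let $u^{(m)}$ be the leading eigenvector of $\Delta+W^{(m)}$. Then $W_{m:}$, which is a function only of hyperedges containing $m$, is independent of $u^{(m)}$. We bound $|W_{m:}u^{(m)}|$ by Bernstein in the form $\sum_{e\ni m}(H_e-p)\sum_{j\in e\setminus m}u^{(m)}_j$, getting roughly $\sqrt{n^{d-2}p\log n}\,\|u^{(m)}\|_\infty\sqrt{n}$ plus a bounded-term contribution. We bound $\|u-u^{(m)}\|_2$ by Davis--Kahan applied to the perturbation $W-W^{(m)}$, whose action on $u^{(m)}$ is controlled by $\|W_{m:}\|_2$ together with the $2\to\infty$ norm of the remainder. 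A self-consistent bound on $\|u\|_\infty$ then closes the argument, giving $\max_i |(Wu)_i|/\lambda = o(1/\sqrt k)$.

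\textbf{Step 3 (conclusion).} Combining, we get $|u_i|\ge (1-o(1))/\sqrt k$ for $i\in S$ and $|u_i|=o(1/\sqrt k)$ for $i\notin S$, uniformly over $i$ with high probability. Top-$k$ selection then returns $S$. We expect the main obstacle to be Step 2. The subtlety is that $W-W^{(m)}$ is not a single row and column: deleting hyperedges through $m$ perturbs $O(n^{d-2})$ entries across many rows, each by $O(1)$. Bounding $\|(W-W^{(m)})u^{(m)}\|_2$ therefore requires a careful computation showing that this off-row part is of lower order. We would try to do this through a Bernstein bound on $\sum_{e\ni m}(H_e-p)\1_{e\setminus m}\langle \1_{e\setminus m},u^{(m)}\rangle$, using the delocalization of $u^{(m)}$.
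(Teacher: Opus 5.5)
Your architecture is essentially the paper's: Davis--Kahan for $\ell_2$ control, a leave-one-out matrix that deletes every hyperedge through $m$ so that row $m$ of the noise is independent of the leave-one-out eigenvector, scalar Bernstein for the row term, and vector Bernstein for the action of the deleted part. However, Step~1 contains a genuine gap. The claim that the $\sqrt{\log n}$ loss of matrix Bernstein is absorbed because the hypothesis is stated with $\gg$ is false. The hypothesis only says $k/\bigl((p/(1-p))^{1/(2(d-1))}\sqrt n\bigr)\to\infty$, possibly as slowly as $\log\log n$. Hence $\sqrt{p(1-p)n^{d-1}\log n}/\lambda^*$ need not vanish or even stay bounded, and with it you lose $\lambda\approx\lambda^*$, the leave-one-out eigengap, and $\|u-u^*\|_2=o(1)$. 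Matrix Bernstein alone proves recovery only under a hypothesis stronger by a factor $(\log n)^{1/(2(d-1))}$. The sharp tools you suggest do not apply as stated. Lata{\l}a's and Bandeira--van Handel's bounds are for matrices with independent entries, whereas here $W_{ij}$ and $W_{il}$ share every hyperedge containing $\{i,j,l\}$. The missing ingredient is a log-free bound $\bigl\|\sum_{e}(H_e-p)B_e\bigr\|\le C\sqrt{n\binom{n-2}{d-2}p}$ for sums over independent hyperedges. The paper imports it from Lee--Kim--Chung (Lemma~\ref{lem:lkc_mod}), stated for arbitrary deterministic index sets so that it also controls the deleted parts uniformly in $m$.

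The same loss reappears in Step~2 as you wrote it. Bounding the variance of $W_{m:}u^{(m)}$ through $\sqrt n\,\|u^{(m)}\|_\infty$ gives a radius of order $\sqrt{pn^{d-1}\log n/k}$, and dividing by $\lambda^*$ again requires $\sqrt{pn^{d-1}\log n}=o(\lambda^*)$. Count as in Lemma~\ref{lem:sum-ae2} instead: each $j\neq m$ lies in $\binom{n-2}{d-2}$ hyperedges through $m$, so the variance is at most $p(1-p)(d-1)\binom{n-2}{d-2}\|u^{(m)}\|_2^2$. The radius is then $O(\sqrt{pn^{d-2}\log n}+\log n)$, and $\sqrt k$ times this is $o(\lambda^*)$ by the second and third relations of Lemma~\ref{lem:lamstar_scale}, with no delocalization used.

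There is a second consequence of keeping the full signal $\Delta$ in your leave-one-out matrix. For $m\in S$ you have $u^{(m)}_m\neq0$, and $(W-W^{(m)})u^{(m)}$ contains the column $u^{(m)}_m W_{:m}$, of norm about $|u^{(m)}_m|\sqrt{pn^{d-1}}$. You therefore need $|u^{(m)}_m|=O(k^{-1/2})$. This follows directly from the $m$th eigen-equation, since row $m$ of $\Delta+W^{(m)}$ equals row $m$ of $\Delta$. You must also bound this column by the log-free $\|W\|$, because routing it through vector Bernstein costs another logarithmic factor.

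The paper avoids this by deleting the planted hyperedges through $m$ as well. Then $u^{(-m)}_m=0$, and $\|u^{(-m)}\|_2=1$ suffices (Claim~\ref{clm:Wm_um_uniform_pub}). The price is a deterministic term $P^{(m)}$ with $\|P^{(m)}\|\le 2d\lambda^*/\sqrt k$, so only $\|u-u^{(-m)}\|_2=O(k^{-1/2})$ holds. This is compensated by $\|M\|_{2\to\infty}=o(\lambda^*)$ and by comparing $u$ with the proxy $Mu^*/\lambda^*$, so no self-consistent $\ell_\infty$ bound is needed.
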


\subsection{Sparse regime}
\label{sec:sparse_regime}

Our results for detection and recovery also extend to the case where \(p=p_n\), allowing sparse hypergraphs.

\begin{corollary}[Sparse detection]
\label{cor:pn_extensions_detection}
Let \(p_n>0\) satisfy
\[
p_n\gtrsim n^{-(d-1)}\log n
\qquad\text{and}\qquad
\sup_n p_n<1.
\]
Then the conclusion of Theorem~\ref{thm:detection} remains valid with \(p=p_n\).
\end{corollary}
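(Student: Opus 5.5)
The plan is to rerun the proof of Theorem~\ref{thm:detection} and track every place where $p$ was treated as a constant. The argument has two halves. Under the null we need $\|M\|\le C_d\sqrt{n\binom{n-2}{d-2}p}$ with probability $1-o(1)$. Under the alternative we need a lower bound $\|M\|\ge \langle v,Mv\rangle$ with $v=\1_S/\sqrt k$, and this must exceed the threshold. Only the null bound relies on concentration. The alternative bound splits into a deterministic signal term and a centered noise term, and I treat that noise term the same way as the null term.

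For the null, write $M=\sum_{e}(H_e-p)A^{(e)}$, where $A^{(e)}$ is the zero-diagonal clique matrix on $e$ with $\|A^{(e)}\|=d-1$. This is a sum of independent, centered, bounded random matrices. The variance proxy is $\|\sum_e p(1-p)(A^{(e)})^2\|$. Its diagonal part equals $p(1-p)(d-1)\binom{n-1}{d-1}$. Its off-diagonal part has entries of order $p\binom{n-2}{d-2}$, so its norm is at most $n p\binom{n-2}{d-2}$. Altogether $\sigma^2\asymp p\,n^{d-1}$ up to constants depending on $d$. Matrix Bernstein gives $\|M\|\lesssim \sigma\sqrt{\log n}+\log n$, which costs a $\sqrt{\log n}$ factor. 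To obtain the sharp order $\sqrt{n\binom{n-2}{d-2}p}\asymp\sigma$, I would use whichever sharper tool the dense proof used, such as a Bandeira--van Handel type bound $\E\|M\|\lesssim\sigma+\sigma_*\sqrt{\log n}$ for sums of independent matrices, or a trace/moment method. Here $\sigma_*$ is of order $d-1$, the largest single-summand norm. This is where the sparsity hypothesis enters: $p_n\gtrsim n^{-(d-1)}\log n$ gives $\sigma^2\asymp p_n n^{d-1}\gtrsim\log n$, so $\sigma_*\sqrt{\log n}\lesssim\sigma$. Then $\|M\|\le C_d\sigma$ with high probability once $C_d$ is large enough, using a Talagrand/bounded-differences concentration around the mean in the same regime. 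I expect this to be the main obstacle. If the dense proof used a bound that was only valid for constant $p$, it has to be replaced by one with an explicit $\sigma_*$ term, and one must check that the constant $C_d$ stays uniform in $n$. The assumption $\sup_n p_n<1$ keeps $(1-p_n)$ bounded away from $0$, which prevents the variance from degenerating.

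For the alternative, $\langle v,Mv\rangle=\langle v,(\E_k[A\mid S]-\E_0A)v\rangle+\langle v,(A-\E_k[A\mid S])v\rangle$. The first term is deterministic and equals $(1-p_n)(k-1)\binom{k-2}{d-2}\gtrsim (1-p_n)k^{d-1}/(d-2)!$. The second term is a scalar sum over $e\not\subset S$ of centered Bernoulli variables with bounded coefficients. Its variance is at most $p_n\cdot O(k^{2}n^{d-2})/k^{2}\cdot k$, which is $O(\sigma^2)$. By Bernstein it is $O_P(\sigma)$, and the $\log$ term is again absorbed using $p_nn^{d-1}\gtrsim\log n$. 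Finally, condition \eqref{eq:k0_def_clean} with $p=p_n$ gives $(1-p_n)k^{d-1}\ge 2C_d\sqrt{p_n}\,n^{(d-1)/2}$ up to the $d$-dependent normalisation already fixed in $C_d$. So the signal exceeds twice the threshold, and after subtracting the $O_P(\sigma)$ fluctuation the test rejects. Uniformity over $k\ge k_0$ follows because the signal term is monotone in $k$ and the fluctuation bound does not depend on $k$.
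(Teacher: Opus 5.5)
Your type~II argument does not close as written. You bound the variance of the fluctuation $\langle v,(A-\E[A\mid S])v\rangle$, with $v=\1_S/\sqrt{k}$, by $O(p_nkn^{d-2})$. You then collapse this to the $k$-free bound $O(\sigma^2)$ and conclude that the fluctuation is $O_P(\sigma)$. But the signal beats the threshold $t_1=C_d\sigma$ only by a factor $2$, so the fluctuation must stay below $t_1$ with probability tending to one. Bernstein with a variance proxy of order $\sigma^2$ only gives a tail of order $\exp(-cC_d^2)$, which does not vanish, so asymptotic power does not follow.

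The missing step is the sharper count the paper uses. Write the fluctuation as $k^{-1}\sum_{e\not\subset S}c_e(S)(H_e-p_n)$, where $c_e(S)=|e\cap S|(|e\cap S|-1)\le U_d=(d-1)(d-2)$ and $\sum_e c_e(S)\le k^2\binom{n-2}{d-2}$. Then:
\begin{itemize}
\item the variance is at most $p_n(1-p_n)U_d\binom{n-2}{d-2}=O(\sigma^2/n)$, uniformly in $k$;
\item the Bernstein envelope is $U_d/k=O(1)$;
\item $t_1\gtrsim\sqrt{\log n}\to\infty$ by the sparsity assumption.
\end{itemize}
Hence the fluctuation is $o_P(t_1)$. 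Your own $k$-dependent bound would also suffice if you compared it with the $k$-dependent signal $(1-p_n)(k-1)\binom{k-2}{d-2}\asymp k^{d-1}$ rather than with the fixed margin $t_1$. The ratio of your variance bound to the squared signal is decreasing in $k$ and is $O(k_0/n)$ at $k=k_0$. With either repair, your direct use of $\1_S/\sqrt{k}$ for every $k\ge k_0$ (monotone signal, $k$-uniform tail) is a legitimate and slightly simpler substitute for the paper's coupling $H^{(S)}\ge H^{(T)}$ down to $|T|=k_0$. You should also record that $k_0\gtrsim(\log n)^{1/(2(d-1))}\to\infty$ in the sparse regime. This gives $(k_0-1)\binom{k_0-2}{d-2}=(1+o(1))k_0^{d-1}/(d-2)!$ and keeps the factor-$2$ margin, which a bare ``$\gtrsim$'' would lose.

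On the null side, the tools you name would not work here. The bound $\E\|X\|\lesssim\sigma+\sigma_*\sqrt{\log n}$ is a statement about matrices with independent entries, and the entries of $M$ are dependent. Read as an inequality for general sums of independent matrices, with $\sigma_*$ the largest summand norm, it is false: sums of many small independent diagonal matrices violate it. Talagrand or bounded-differences concentration of $\|M\|$ only controls fluctuations at the scale of the Euclidean Lipschitz constant of $(H_e)_e\mapsto\|M\|$. That constant is at least $\binom{n}{d}^{-1/2}\|\sum_e B_e\|\asymp n^{(d-2)/2}$, which is much larger than $\sigma\asymp\sqrt{p_nn^{d-1}}$ whenever $p_n\ll 1/n$, i.e.\ on most of the sparse range. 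Your fallback should point to the Lee--Kim--Chung high-probability bound used in the dense proof (Lemma~\ref{lem:lkc_mod}, Corollary~\ref{cor:lkc_mod}). It is already formulated for $p_n$, and its hypothesis \eqref{eq:alpha_cond} is exactly $n\binom{n-2}{d-2}p_n\gtrsim\log n$. So you located the role of the sparsity assumption correctly, and with that lemma the type~I bound transfers verbatim, as in the paper.
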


\begin{corollary}[Sparse recovery]
\label{cor:pn_extensions_recovery}
Let \(p_n>0\) satisfy
\[
p_n\gtrsim n^{-(d-1)}\log^{c_d} n
\qquad\text{and}\qquad
\sup_n p_n<1,
\]
for any constant  \(c_d\ge 4(d-1)/(2d-3)\). Then the conclusion of
Theorem~\ref{thm:recovery_sim} remains valid with \(p=p_n\).
\end{corollary}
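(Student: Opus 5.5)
The plan is to rerun the proof of Theorem~\ref{thm:recovery_sim} with $p=p_n$, tracking every place where a fixed $p$ was used. In that proof the centered matrix splits as $M=\E_k[A\mid S]-\E_0[A]+W$, where $W\coloneqq A-\E_k[A\mid S]$ is the noise. The signal part is, up to a diagonal correction, a rank-one spike on $\1_S$ of strength $\lambda\asymp (1-p)\binom{k-2}{d-2}k\asymp (1-p)k^{d-1}$. Off-diagonal blocks such as $S\times S^c$ carry smaller shifts of order $(1-p)k^{d-2}$, which we treat as a deterministic perturbation.

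The analysis rests on three ingredients, and each must be rechecked when $p_n\to0$:
\begin{itemize}
\item \emph{Spectral norm of the noise.} We need $\|W\|\lesssim\sqrt{n\binom{n-2}{d-2}p_n}=:\sigma_n$ with high probability. Write $W=\sum_e (H_e-\E H_e)\,(\1_e\1_e^\top-\mathrm{diag}(\1_e))$ and apply matrix Bernstein. Each summand has norm at most $d$, and the variance proxy is of order $\sigma_n^2$. This gives $\|W\|\lesssim \sigma_n\sqrt{\log n}+\log n$.
\item \emph{Row norms.} We need $\|W\|_{2\to\infty}$ and the row-wise bounds $|W_{i:}v|$ for vectors $v$ independent of the hyperedges through $i$.
\item \emph{Leave-one-out step.} For each $i$, build $M^{(i)}$ by deleting all hyperedges containing $i$. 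Then bound $\|u-u^{(i)}\|_2$ by Davis--Kahan, and bound $|W_{i:}u^{(i)}|$ by Bernstein's inequality for sums of independent bounded variables.
\end{itemize}

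In the dense case the logarithmic factors were absorbed into $p$-dependent constants. For sparse $p_n$ the Bernstein bounds have a Poisson-type term $L\log n$ (where $L$ bounds a single summand) next to the Gaussian term $\sqrt{\mathrm{Var}\cdot\log n}$. So we need $\mathrm{Var}\gtrsim\log n$ up to polylogarithmic factors, that is, $n^{d-1}p_n\gtrsim\log^{c}n$. This is exactly what the hypothesis $p_n\gtrsim n^{-(d-1)}\log^{c_d}n$ provides, and the dependence of all constants on $p$ enters only through $p_n/(1-p_n)$, which stays controlled because $\sup_np_n<1$.

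Next we check the eigengap and entrywise separation. The eigengap condition $\lambda\gg\|W\|$ reads
\[
(1-p_n)k^{d-1}\gg \sqrt{n^{d-1}p_n\log n}.
\]
The entrywise estimate requires the leave-one-out error $\|u-u^{(i)}\|_2$ and the row term $|W_{i:}u^{(i)}|/\lambda$ to be $o(1/\sqrt k)$ uniformly in $i$. This keeps $|u_i|\ge c/\sqrt k$ on $S$ and $|u_i|=o(1/\sqrt k)$ off $S$, so the top-$k$ rule returns $S$.

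Substituting $k\gg (p_n/(1-p_n))^{1/(2(d-1))}\sqrt n$ converts both requirements into the sufficient condition
\[
\Bigl(\tfrac{k}{\sqrt n}\Bigr)^{2(d-1)}\tfrac{1-p_n}{p_n}\gg \text{polylog}(n)\cdot(\text{lower-order terms}).
\]
The polylogarithmic losses must therefore be swallowed by the slack in ``$\gg$'' together with the sparsity floor. Balancing the Poisson term $\log n$ against $\sqrt{n^{d-1}p_n}$ raised to the power coming from the $k^{d-1}$ versus $k^{d-2}$ hierarchy is where an exponent like $4(d-1)/(2d-3)$ arises. I would verify that choosing $c_d\ge 4(d-1)/(2d-3)$ makes each such error term $o(1)$.

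The main obstacle is the entrywise (leave-one-out) bound in the sparse regime. There the Poisson-type Bernstein term can dominate, and the off-diagonal shift terms of order $k^{d-2}$ must be shown negligible relative to the $1/\sqrt k$ separation without the help of a constant $p$. I would first try to bound $|W_{i:}u^{(i)}|$ using $\|u^{(i)}\|_\infty\lesssim 1/\sqrt k$, obtained by a self-bounding bootstrap, rather than $\|u^{(i)}\|_2=1$. This sharpens the Poisson term by a factor of $\sqrt k$, and that factor is what makes the stated power of $\log n$ suffice.
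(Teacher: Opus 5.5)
There is a genuine gap, and it sits in the spectral-norm step.

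\textbf{The spectral-norm step.} You correctly say you need $\|W\|\lesssim\sigma_n$. Matrix Bernstein, however, only gives $\|W\|\lesssim\sigma_n\sqrt{\log n}+\log n$, and your own eigengap display then demands $(1-p_n)k^{d-1}\gg\sqrt{n^{d-1}p_n\log n}$. The hypothesis $k\gg(p_n/(1-p_n))^{1/(2(d-1))}\sqrt n$ only gives $(1-p_n)k^{d-1}/\sqrt{p_n(1-p_n)n^{d-1}}\to\infty$, possibly at an arbitrarily slow rate. So the extra $\sqrt{\log n}$ is not swallowed by the slack in ``$\gg$''.

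The sparsity floor cannot help either. A lower bound on $p_n$ leaves the ratio $\sigma_n\sqrt{\log n}/\lambda^*\asymp\sqrt{\log n}\,(p_n^{1/(2(d-1))}\sqrt n/k)^{d-1}$ unchanged. For the same reason, your remark that logs were ``absorbed into $p$-dependent constants'' in the dense case is not right: a $\sqrt{\log n}$ is not a constant, and the paper avoids it even for fixed $p$.

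\textbf{The missing ingredient.} You need a log-free bound $\|A(\mathcal I)-\E A(\mathcal I)\|\le C_{\alpha,d}\sqrt{n\binom{n-2}{d-2}p_n}$ for deterministic hyperedge index sets $\mathcal I$. This is valid once $p_n\ge\alpha\log n/(n\binom{n-2}{d-2})$ (Lemma~\ref{lem:lkc_mod}, adapted from Lee--Kim--Chung). Your sparsity hypothesis implies this condition because $c_d\ge 4(d-1)/(2d-3)>1$.

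The paper applies this bound to $\mathcal I=\Eout$, and also to each $\cE_{m,\mathrm{out}}$ with a union bound over $m$. The second application is not optional. The $m$th row of $W^{(m)}$ alone typically has norm of order $\sigma_n$, so $\max_m\|W^{(m)}\|$ also sits exactly at the threshold. Losing $\sqrt{\log n}$ there would break the leave-one-out gap $\delta_m(\lambda)\ge(1-o(1))\lambda^*$ and the condition $\eta_m=o(1)$ that your Davis--Kahan step relies on.

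\textbf{What remains once the lemma is in hand.} The corollary then reduces to Lemma~\ref{lem:lamstar_scale}. Every later estimate uses $p_n$ only through three ratios:
$\sqrt{p_n n^{d-1}}/\lambda^*$, which is exactly the threshold ratio;
$\sqrt{p_n k n^{d-2}\log n}/\lambda^*$, which has polynomial slack $n^{-1/4}$;
and $\sqrt k\log n/\lambda^*\asymp\log n/k^{d-3/2}$.

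The exponent $c_d$ enters only through the third ratio. Since $k\gg p_n^{1/(2(d-1))}\sqrt n\gtrsim\log^{c_d/(2(d-1))}n$, that ratio is $o(1)$ as soon as $c_d(2d-3)/(4(d-1))\ge1$. So the exponent comes from $k^{d-1}$ against the $\ell_\infty$ scale $\sqrt k$, not from a $k^{d-1}$ versus $k^{d-2}$ hierarchy. Concretely, it comes from two places:
the Poisson terms $K_n\|u^{(-m)}-u^*\|_\infty$, bounded with the crude estimate $\|u^{(-m)}-u^*\|_\infty\le2$;
and the envelope $d-1$ in the vector Bernstein bound for $W^{(m)}u^{(-m)}$.

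No $\|u^{(-m)}\|_\infty\lesssim1/\sqrt k$ bootstrap is needed. Such a bootstrap would only lower the requirement further, to roughly $\log n/\lambda^*=o(1)$; it is not what makes the stated power suffice.

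\textbf{The $S\times S^c$ shifts.} The shifts you plan to control do not exist. For $i\neq j$ we have $\E_S[A]_{ij}=p_n\binom{n-2}{d-2}$ whenever $\{i,j\}\not\subseteq S$, so $M^*=(w_{\mathrm{in}}-w_{\mathrm{out}})(\1_S\1_S^\top-I_S)$ exactly. Entries of order $k^{d-2}$ appear only in the leave-one-out planted part $P^{(m)}$, whose norm is at most $2d\lambda^*/\sqrt k$ by Lemma~\ref{lem:Pm_crude_pub}.
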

\section{Discussion}
\label{sec:discussion}
\subsection{Proof strategy}

The proof of Theorem~\ref{thm:detection} is based on a coupling reduction and a one-dimensional test statistic. Under \(\mathcal H_0\), the operator norm \(\|M\|\) is controlled directly by a concentration bound for the centered adjacency matrix. Under \(\mathcal H_1\), for any planted set \(S\) with \(|S|=k\ge k_0\), we choose a subset \(T\subset S\) of size \(k_0\) and couple the corresponding models so that the hyperedge indicators under \(S\) dominate those under \(T\) coordinatewise. Bounding the operator norm  \(\|M\|\) by the quadratic form \(\langle u_T, Mu_T\rangle\), where \(u_T=\1_T/\sqrt{k_0}\), reduces the type II error analysis to a lower-tail bound for a scalar statistic under the reduced model with clique size \(k_0\). This statistic admits a signal--noise decomposition, in which the signal is deterministic and of order \(k_0^{d-1}\), while the fluctuation is controlled by Bernstein's inequality. This gives the stated \(\sqrt n\)-scale detection threshold.

The proof of Theorem~\ref{thm:recovery_sim} follows the entrywise eigenvector framework of \cite{abbe2020entrywise,gaudio2023community}. Let $M^*$ denote the conditional expectation of $M$ given the planted set $S$, and let $(\lambda^*,u^*)$ denote its leading eigenpair, where
\(
u^*= \1_S/\sqrt{k}.
\)
Exact recovery through \(u\) follows once
\[
2\|u-u^*\|_\infty<k^{-1/2},
\]
since \(u_i^*=k^{-1/2}\) for \(i\in S\) and \(u_i^*=0\) for \(i\notin S\). When only the adjacency matrix is available, one observes pairwise co-occurrence counts rather than hyperedge indicators. Each hyperedge contributes to many pairs \((i,j)\), inducing dependence across the entries and making sharp entrywise perturbation bounds harder to obtain. To handle this, we compare the empirical leading eigenvector \(u\) of \(M\) to the one-step proxy
\[
\frac{Mu^*}{\lambda^*}.
\]
This is natural because \(M^*u^*=\lambda^*u^*\), so the proxy admits the decomposition
\[
\frac{Mu^*}{\lambda^*}
=
u^*+\frac{(M-M^*)u^*}{\lambda^*},
\]
which separates the planted term \(u^*\) from the fluctuation \((M-M^*)u^*/\lambda^*\). To control the entrywise difference between \(u\) and this proxy, we use a leave--one--out construction. For each vertex \(m\), we remove from \(M\) the contribution of all hyperedges incident to \(m\) and study the leading eigenvector of the resulting matrix. Then the \(m\)th row fluctuation of \(M\) depends only on hyperedges incident to \(m\), whereas the leave--one--out eigenvector depends only on the remaining hyperedges. This restores the conditional independence needed for rowwise Bernstein bounds and is the key device that makes the entrywise analysis work under the adjacency-matrix observation model.

\subsection{Comparison with tensor-based benchmarks and outlook}

Our sufficient condition for detection in Theorem~\ref{thm:detection} has the canonical $\sqrt n$ scaling, with explicit $p$-dependence through \eqref{eq:k0_def_clean}. In the clique specialization of the hypergraphic planted dense subhypergraph model, \cite[Conjecture~2]{luo2022tensor} conjectures that the computational transition occurs at the scale given by \eqref{eq:recovery_benchmark}. Using the reduction in \cite[Lemma~1]{luo2022tensor}, this also suggests a detection threshold at \eqref{eq:recovery_benchmark}. When $1-p$ is bounded away from zero, the dependence on $n$ and $p$ in \eqref{eq:k0_def_clean} matches this benchmark up to $d$-dependent constants. This comparison rests on conjectured hardness, and tight computational lower bounds for planted detection remain open even in the classical graph planted clique problem \cite{gamarnik2024landscape}.

Our sufficient condition for recovery in Theorem~\ref{thm:recovery_sim} matches the tensor benchmark \eqref{eq:recovery_benchmark} in its dependence on \(n\) and \(p\). We believe that, with additional careful constant tracking, the same proof would yield the benchmark \eqref{eq:recovery_benchmark} itself; the stronger ''\(\gg\)'' assumption is adopted here for convenience.

Both Theorem~\ref{thm:detection} and Theorem~\ref{thm:recovery_sim} are stated in the oracle setting where \(p\) is known, since the procedures are formulated in terms of the centered matrix \(M=A-\E_0[A]\). To relax this assumption, one could estimate \(p\) from the observed adjacency matrix and plug the estimate into the centering step. Alternatively, one could work directly with \(A\) itself. While \(\1_S\) appears to be an eigenvector of a \(A\) only in degenerate cases with \(p = 0\), \(k = 0\) and \(k = n\), the indicator vectors \(\1_S\) and \(\1_{S^c}\) span the leading two-dimensional eigenspace of \(A\). Thus, one could estimate the top two eigenvectors and cluster the corresponding two-dimensional embedding. We do not pursue this direction here.

Thus, under the adjacency matrix observation model, both detection and recovery admit canonical \(\sqrt n\)-scale sufficient conditions. In the sparse regime, Corollaries~\ref{cor:pn_extensions_detection} and \ref{cor:pn_extensions_recovery} state that detection continues to hold under \(p_n=\Omega(n^{-(d-1)}\log n)\), while our recovery result holds under \(p_n=\Omega(n^{-(d-1)}\log^{c_d} n)\) for a sufficiently large \(c_d\).

A natural next step is to optimize the constants and sparse-regime assumptions, to remove or relax the oracle knowledge of \(p\), and to better understand the limits of inference under the adjacency-matrix observation model.

\section{Proofs}
\label{sec:proofs}
This section collects the proofs of our main results. We first prove Theorem~\ref{thm:detection}. The remaining sections develop the tools needed for Theorem~\ref{thm:recovery_sim}.

\subsection{Proof of Theorem \ref{thm:detection}}
\label{sec:proof_detection}
Write
\begin{equation}
M = A-\E_0[A],
\qquad
t_1 \coloneqq C_{\alpha,d}\sqrt{n\binom{n-2}{d-2}p},
\label{eq:t_1}
\end{equation}
where \(C_{\alpha,d}\) is the constant from Lemma~\ref{lem:lkc_mod}, and define
\[
\kappa(d,p)
\coloneqq
\left(2C_{\alpha,d}\sqrt{(d-2)!}\,\frac{\sqrt p}{1-p}\right)^{\frac1{d-1}}.
\]
Take \(k_0 = k_0(n)\) as the smallest integer strictly larger than \(\kappa(d,p)\sqrt n\), that is
\[
k_0 \coloneqq \lfloor \kappa(d,p)\sqrt n \rfloor + 1.
\] 
Thus, the hypothesis of Theorem \ref{thm:detection} is satisfied, and in particular, \(k_0\to\infty\), and hence \(k_0\ge d\) for all sufficiently large \(n\).

Fix \(\delta\in(0,1)\). It suffices to show that for all sufficiently large \(n\),
\begin{equation}\label{eq:type1_2}
    \P_0(\|M\|>t_1)
+
\sup_{k_0\le k\le n}\P_k(\|M\|\le t_1)
<\delta.
\end{equation}

\subsubsection{Type I error}
\begin{lemma}
\label{lem:type_I_error}
    For $(A,S) \sim \HPC(n,d,k,p)$, the type I error for the spectral test $\varphi_n(\cdot)$ defined in \eqref{eq:spectral_test} satisfies
    \[
    \P_0(\varphi_n(A) =1) \le 4n^{-11}
    \]
\end{lemma}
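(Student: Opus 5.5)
Under $\mathcal H_0$ we have $k=0$, so $\E_0[A]$ is exactly the mean of $A$ and $M=A-\E_0[A]$ is a centered random matrix. The event $\{\varphi_n(A)=1\}$ is the event $\{\|M\|>t_1\}$, with $t_1=C_{\alpha,d}\sqrt{n\binom{n-2}{d-2}p}$ as in \eqref{eq:t_1}. The plan is therefore to invoke the concentration bound of Lemma~\ref{lem:lkc_mod} for the centered adjacency matrix of an $\ER(n,d,p)$ hypergraph. That lemma furnishes the constant $C_{\alpha,d}$, and we take $C_d=C_{\alpha,d}$ in \eqref{eq:spectral_test}. We read it as stating that
\[
\|A-\E A\|\le C_{\alpha,d}\sqrt{n\binom{n-2}{d-2}p}
\]
with probability at least $1-4n^{-\alpha}$ (or a similar polynomial tail). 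Choosing the parameter $\alpha=11$ then gives $\P_0(\|M\|>t_1)\le 4n^{-11}$ directly.

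If Lemma~\ref{lem:lkc_mod} is phrased for general independent-hyperedge models rather than $\ER$ specifically, the first step is to check its hypotheses for the null model. We write
\[
M=\sum_{e}(H_e-p)\,Z_e,
\]
where $Z_e$ is the adjacency matrix of the $d$-clique on $e$ with zero diagonal. This is a sum of independent, centered, symmetric random matrices with $\|Z_e\|=d-1$. Each entry $M_{ij}$ is a sum of $\binom{n-2}{d-2}$ centered Bernoullis, so it has variance $\binom{n-2}{d-2}p(1-p)$. Each row therefore has total variance of order $n\binom{n-2}{d-2}p$, which is precisely the scale $t_1^2$.

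We also need the sparsity condition that such lemmas typically require, namely that the expected degree $n\binom{n-2}{d-2}p$ dominates $\log n$. For fixed $p\in(0,1)$ this holds trivially, since this quantity grows like $n^{d-1}p$.

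The only genuine obstacle is ensuring that the tail probability produced by the lemma is at least as small as $4n^{-11}$. This is a matter of fixing $\alpha$ (and hence $C_{\alpha,d}$) appropriately. If one instead had to prove the bound from scratch, the main difficulty would be the dependence among the entries of $M$. I would handle it with matrix Bernstein applied to the sum $\sum_e (H_e-p)Z_e$. The variance proxy is $\|\sum_e p(1-p)Z_e^2\|\asymp n\binom{n-2}{d-2}p$ and the uniform bound is $d-1$, which yields a tail bound of the form $2n\exp(-ct^2/(\sigma^2+t))$. This costs an extra $\sqrt{\log n}$ factor compared with $t_1$. Removing that factor is the nontrivial content of Lemma~\ref{lem:lkc_mod}, which we take as given.
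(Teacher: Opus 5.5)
Your proposal is correct and takes the same approach as the paper: the paper's proof identifies $\{\varphi_n(A)=1\}$ with $\{\|M\|>t_1\}$ and applies Corollary~\ref{cor:lkc_mod} with $S=\emptyset$, which is Lemma~\ref{lem:lkc_mod} applied to the centered null adjacency matrix. One small correction: in Lemma~\ref{lem:lkc_mod}, $\alpha$ is the constant in the sparsity condition~\eqref{eq:alpha_cond}, not the tail exponent, and the failure probability is $4n^{-11}$ for every $\alpha$. So nothing needs to be tuned, and your check that $n\binom{n-2}{d-2}p\gg\log n$ for fixed $p$ is exactly what makes any fixed $\alpha$ admissible once $n$ is large.
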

\begin{proof}
    Using the notation from the beginning of this section and \eqref{eq:type1_2}, \(\P_0(\varphi_n(A) =1) = \P_0(\|M\|>t_1)\). Under \(\cH_0\), \(A-\E_0[A]\) is the centered adjacency matrix. Using Corollary~\ref{cor:lkc_mod} with \(S=\emptyset\) for the concentration of the adjacency matrix of a hypergraph directly proves the lemma.
\end{proof}

\subsubsection{Type II error}

Since the alternative hypothesis is a composite hypothesis, in order to bound the supremum in \eqref{eq:risk} we use a coupling technique and show that recovering the smallest admissable clique size of $k_0(n)$ is the hardest. Subsequently, in order to bound the type II error when a clique of size $k_0(n)$ is present, we prove that a corresponding quadratic form exceeds the threshold \(t_1\) from \eqref{eq:t_1} with high probability.

Fix \(k\in(k_0,n)\), fix a deterministic planted set \(S\subset[n]\) with \(|S|=k\), and write
\(\P_S(\cdot)\coloneqq \P_k(\cdot\mid S)\).
Choose any deterministic subset \(T\subset S\) with \(|T|=k_0\).

Let \(\{U_e\}_{e\in\binom{[n]}{d}}\) be i.i.d.\ \(\mathrm{Unif}[0,1]\), and define two coupled hyperedge families
\[
H^{(S)}_e \coloneqq
\begin{cases}
1, & e\subset S,\\
\1\{U_e\le p\}, & e\not\subset S,
\end{cases}
\qquad
H^{(T)}_e \coloneqq
\begin{cases}
1, & e\subset T,\\
\1\{U_e\le p\}, & e\not\subset T.
\end{cases}
\]
It follows that \(H^{(S)}\) has the law \(\HPC(n,d,k,p)\) conditional on \(S\), while \(H^{(T)}\) has the law
\(\HPC(n,d,k_0,p)\) conditional on \(T\). We use the shorthands \(\P_S\) and \(\P_T\), respectively. Since \(T\subset S\), we have
\begin{equation}
H^{(S)}_e \ge H^{(T)}_e
\qquad\text{for every }e\in\binom{[n]}{d}.
\label{eq:coupling_bound}
\end{equation}

Let
\[
u_T \coloneqq \frac{\1_T}{\sqrt{k_0}}.
\]
For any hyperedge configuration \(H=(H_e)_{e\in\binom{[n]}{d}}\), define
\[
A(H)_{ij}\coloneqq \sum_{e:\,\{i,j\}\subset e} H_e \quad (i\neq j),
\qquad
A(H)_{ii}\coloneqq 0,
\qquad
M(H)\coloneqq A(H)-\E_0[A].
\]

Let
\[
q(H;u_T)\coloneqq \langle u_T,M(H)u_T\rangle.
\]
For \(e\in\binom{[n]}{d}\), define
\[
c_e(T)\coloneqq |e\cap T|\left(|e\cap T|-1\right).
\]

\begin{proposition}[Hyperedge decomposition of the quadratic form]
\label{prop:q_decomposition}
For every hyperedge configuration \(H=(H_e)_{e\in\binom{[n]}{d}}\),
\[
q(H;u_T)
=
\frac{1}{k_0}
\sum_{e\in\binom{[n]}{d}} c_e(T)(H_e-p).
\]
\end{proposition}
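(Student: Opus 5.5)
The identity is purely algebraic, so the plan is to expand the quadratic form directly and then exchange the order of summation. With \(u_T=\1_T/\sqrt{k_0}\) and zero diagonals of both \(A(H)\) and \(\E_0[A]\), we have
\[
q(H;u_T)=\frac{1}{k_0}\sum_{i,j\in T,\, i\neq j}\bigl(A(H)_{ij}-\E_0[A]_{ij}\bigr),
\]
where the sum runs over ordered pairs of distinct vertices of \(T\). First, I will rewrite \(\E_0[A]_{ij}\) in the same hyperedge form as \(A(H)_{ij}\). For \(i\neq j\), \(\E_0[A]_{ij}=\binom{n-2}{d-2}p=\sum_{e:\{i,j\}\subset e}p\), because exactly \(\binom{n-2}{d-2}\) \(d\)-sets contain a fixed pair. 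Hence \(A(H)_{ij}-\E_0[A]_{ij}=\sum_{e\supset\{i,j\}}(H_e-p)\).

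Next, I will substitute this expression and swap the two finite sums:
\[
\sum_{i\neq j\in T}\ \sum_{e\supset\{i,j\}}(H_e-p)=\sum_{e\in\binom{[n]}{d}}(H_e-p)\cdot\#\{(i,j): i\neq j,\ i,j\in e\cap T\}.
\]
The number of ordered pairs of distinct elements of \(e\cap T\) is \(|e\cap T|(|e\cap T|-1)=c_e(T)\). Dividing by \(k_0\) then gives the claimed formula.

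The only real point of care is the ordered-versus-unordered pair bookkeeping. The quadratic form \(\sum_{i,j}\) counts each unordered pair twice, which matches \(c_e(T)\) being \(m(m-1)\) rather than \(\binom{m}{2}\). I also have to confirm that the diagonal contributes nothing, which holds since \(A_{ii}=0\) and the centering matrix has zero diagonal as well. Beyond that the argument is a direct computation and needs no probabilistic input; in particular it holds for every configuration \(H\).
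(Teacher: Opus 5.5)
Your proposal is correct and follows essentially the same route as the paper: you drop the zero diagonal, write $M(H)_{ij}=\sum_{e\supset\{i,j\}}(H_e-p)$, interchange the finite sums, and count the $|e\cap T|(|e\cap T|-1)$ ordered pairs. Your explicit justification of $\E_0[A]_{ij}=\sum_{e\supset\{i,j\}}p$ via the $\binom{n-2}{d-2}$ count spells out a step the paper leaves implicit.
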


\begin{proof}
Since \(u_T=k_0^{-1/2}\1_T\), we have
\[
q(H;u_T)
=
\frac{1}{k_0}
\sum_{i,j\in T} M(H)_{ij}.
\]
Because \(A(H)_{ii}=0\) and \((\E_0[A])_{ii}=0\), it follows that \(M(H)_{ii}=0\), and hence
\begin{equation}
q(H;u_T)
=
\frac{1}{k_0}
\sum_{\substack{i,j\in T\\ i\neq j}} M(H)_{ij}.
\label{eq:quadratic_decomp_interm}
\end{equation}
For \(i\neq j\),
\[
M(H)_{ij}
=
A(H)_{ij}-\E_0[A_{ij}]
=
\sum_{e:\,\{i,j\}\subset e}(H_e-p).
\]
Substituting this into \eqref{eq:quadratic_decomp_interm} yields
\[
q(H;u_T)
=
\frac{1}{k_0}
\sum_{\substack{i,j\in T\\ i\neq j}}
\sum_{e:\,\{i,j\}\subset e}(H_e-p).
\]
Since all sums are finite, we may interchange the order of summation:
\[
q(H;u_T)
=
\frac{1}{k_0}
\sum_{e\in\binom{[n]}{d}}
\left|\left\{(i,j)\in T^2:\ i\neq j,\ \{i,j\}\subset e\right\}\right|
(H_e-p).\]
If \(r=|e\cap T|\), then the number of ordered pairs \((i,j)\in T^2\) with \(i\neq j\) and \(\{i,j\}\subset e\) is exactly \(r(r-1)\). Hence
\[
\left|\left\{(i,j)\in T^2:\ i\neq j,\ \{i,j\}\subset e\right\}\right|
=
|e\cap T|\left(|e\cap T|-1\right)
=
c_e(T),
\]
which gives the claim.
\end{proof}

\begin{lemma}[Coupling lemma]
\begin{equation}
\P_S(\|M\|\le t_1)\wle \P_T(q(H^{(T)}; u_T)\le t_1).
\label{eq:coupling}
\end{equation}
\end{lemma}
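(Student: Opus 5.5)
The plan is to realize both laws on the single probability space carrying the uniforms \(\{U_e\}\). Under this coupling, \(H^{(S)}\) has law \(\P_S\) and \(H^{(T)}\) has law \(\P_T\). It then suffices to prove the pointwise (deterministic) inequality
\[
\|M(H^{(S)})\| \;\ge\; q(H^{(S)};u_T)\;\ge\; q(H^{(T)};u_T)
\]
for every realization of the uniforms. Given this, the event \(\{\|M(H^{(S)})\|\le t_1\}\) is contained in \(\{q(H^{(T)};u_T)\le t_1\}\). Taking probabilities under the coupling, and using that the marginal laws are \(\P_S\) and \(\P_T\), gives \eqref{eq:coupling}.

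For the first inequality, \(M(H)\) is symmetric and \(u_T\) is a unit vector. By the variational characterization,
\[
\langle u_T, M u_T\rangle \;\le\; \lambda_{\max}(M) \;\le\; \|M\|,
\]
so no randomness is involved.

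For the second inequality I will use Proposition~\ref{prop:q_decomposition}, which writes \(q(H;u_T)\) as a linear combination of the \(H_e-p\) with coefficients \(c_e(T)/k_0\). These coefficients are nonnegative, since \(c_e(T)=r(r-1)\ge 0\) for every integer \(r\ge 0\). So \(q(\cdot\,;u_T)\) is coordinatewise nondecreasing in \(H\), and the domination \eqref{eq:coupling_bound} gives
\[
q(H^{(S)};u_T)-q(H^{(T)};u_T)=\frac1{k_0}\sum_e c_e(T)\bigl(H^{(S)}_e-H^{(T)}_e\bigr)\ge 0 .
\]

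The only point needing care is bookkeeping rather than estimation. The centering \(\E_0[A]\) is the same deterministic matrix in both models, so the \(p\) terms cancel in the difference. The statement is about \(\P_S\), i.e.\ conditional on \(S\), and the coupled \(H^{(S)}\) indeed has exactly that law, so no averaging over \(S\) is needed at this stage. I expect no genuine obstacle here: this lemma is a monotonicity step, and the real work is deferred to the lower-tail bound for \(q(H^{(T)};u_T)\) under \(\P_T\).
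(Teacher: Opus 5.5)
Your proposal is correct and follows essentially the same route as the paper. You realize both models on the common uniforms, bound $\|M\|\ge\langle u_T,Mu_T\rangle=q(H^{(S)};u_T)$, and use the nonnegativity of $c_e(T)$ in Proposition~\ref{prop:q_decomposition} together with the domination \eqref{eq:coupling_bound} to get $q(H^{(S)};u_T)\ge q(H^{(T)};u_T)$ pointwise, which yields the event inclusion.
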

\begin{proof}
Since for every \(e\), we have \(c_e(T)\ge 0\) and by \eqref{eq:coupling_bound}, $H^{(S)}_e \ge H^{(T)}_e$ almost surely, it follows that the decomposition given by Proposition \ref{prop:q_decomposition} gives
\begin{equation}
q(H^{(S)}; u_T) = \frac{1}{k_0}
\sum_{e\in\binom{[n]}{d}} c_e(T)(H^{(S)}_e-p) \ge \frac{1}{k_0}
\sum_{e\in\binom{[n]}{d}} c_e(T)(H^{(T)}_e-p) = q(H^{(T)}; u_T)
\label{eq:q_comp}
\end{equation}
almost surely. By definition, $M = M(H^{(S)})$, so that
\begin{equation}
\|M\|
=
\sup_{\|x\|_2=1}|\langle x,M(H^{(S)})x\rangle|
\ge
|\langle u_T,M(H^{(S)})u_T\rangle|
\ge
\langle u_T,M(H^{(S)})u_T\rangle
=
q(H^{(S)}; u_T).
\label{eq:op_bound_seq}
\end{equation}
Hence, \eqref{eq:q_comp} and \eqref{eq:op_bound_seq} give
\[
\{\|M\|\le t_1\}\subseteq \{q(H^{(S)}; u_T)\le t_1\}\subseteq \{q(H^{(T)}; u_T)\le t_1\}.
\]
This implies the claim.
\end{proof}

Thus the type II error analysis reduces to controlling the scalar statistic \(q(H^{(T)};u_T)\) under the reduced alternative \(\P_T\). We next decompose this quantity into a deterministic signal term and a centered fluctuation term.

We proceed to bound \(\P_T(q(H^{(T)}; u_T)\le t_1)\) uniformly over all deterministic \(T\subset[n]\) with \(|T|=k_0\). Fix a deterministic \(T\subset[n]\) with \(|T|=k_0\).
Write
\[
\cE\coloneqq \binom{[n]}{d},
\qquad
\Ein(T)\coloneqq \binom{T}{d},
\qquad
\Eout(T)\coloneqq \cE\setminus \Ein(T).
\]
\begin{observation}[Signal--noise decomposition]
Recalling that \(H^{(T)}_e = 1\) for \(e \in \Ein(T)\), by Proposition~\ref{prop:q_decomposition}, we have the decomposition

\begin{equation}
q(H^{(T)}; u_T)
=
s(n,k_0)
+
\frac{1}{k_0}\sum_{e\in\Eout(T)} c_e(T)(H^{(T)}_e-p),
\label{eq:signal_noise}
\end{equation}
where
\(
s(n,k_0) \coloneqq \frac{1}{k_0}\sum_{e\in\Ein(T)} c_e(T)(1-p).
\)
For the signal term \(s(n,k_0)\), if \(e\in \Ein(T)\), then \(e\subset T\), so \(c_e(T)=d(d-1)\). Hence
\begin{equation}
s(n,k_0)
=
\frac{1-p}{k_0}\,d(d-1)\binom{k_0}{d}
=
(1-p)(k_0-1)\binom{k_0-2}{d-2},
\label{eq:snk_eval}
\end{equation}
where the last equality uses $d(d-1)\binom{k_0}{d}=k_0(k_0-1)\binom{k_0-2}{d-2}$.
\end{observation}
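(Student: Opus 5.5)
The plan is to derive the decomposition \eqref{eq:signal_noise} directly from Proposition~\ref{prop:q_decomposition} applied to the configuration \(H=H^{(T)}\), and then to evaluate the deterministic part by a counting argument. No probabilistic input is needed: both claims are identities that hold for every realization of \(H^{(T)}\).

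First, Proposition~\ref{prop:q_decomposition} gives
\[
q(H^{(T)};u_T)=\frac1{k_0}\sum_{e\in\cE}c_e(T)(H^{(T)}_e-p).
\]
I will split the index set as the disjoint union \(\cE=\Ein(T)\cup\Eout(T)\), which produces two finite sums. By construction of the coupling, \(H^{(T)}_e=1\) for every \(e\subset T\), i.e.\ for every \(e\in\Ein(T)\). The sum over \(\Ein(T)\) therefore becomes the deterministic quantity \(\frac1{k_0}\sum_{e\in\Ein(T)}c_e(T)(1-p)=s(n,k_0)\). The sum over \(\Eout(T)\) is left untouched and is exactly the fluctuation term in \eqref{eq:signal_noise}.

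Second, to evaluate \(s(n,k_0)\), I note that \(e\in\Ein(T)\) means \(e\subset T\). Hence \(|e\cap T|=|e|=d\), and so \(c_e(T)=d(d-1)\) is constant on \(\Ein(T)\). Since \(|\Ein(T)|=\binom{k_0}{d}\), this gives \(s(n,k_0)=\frac{1-p}{k_0}d(d-1)\binom{k_0}{d}\).

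The last step is the identity \(d(d-1)\binom{k_0}{d}=k_0(k_0-1)\binom{k_0-2}{d-2}\). I will verify it by writing both sides in factorials: each equals \(k_0!/\bigl((d-2)!(k_0-d)!\bigr)\). This identity requires \(k_0\ge d\), which holds for large \(n\) as noted earlier. Dividing by \(k_0\) then yields \eqref{eq:snk_eval}.

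There is no real obstacle here. The only points needing care are that the coupling forces \(H^{(T)}_e=1\) exactly on \(\Ein(T)\), and that \(c_e(T)\) counts ordered pairs, which is why the factor is \(d(d-1)\) rather than \(\binom d2\).
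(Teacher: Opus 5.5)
Your proposal is correct and follows the paper's argument exactly: you apply Proposition~\ref{prop:q_decomposition} to $H^{(T)}$, split the sum over $\Ein(T)$ and $\Eout(T)$, use $H^{(T)}_e=1$ on $\Ein(T)$, and evaluate the signal term from $c_e(T)=d(d-1)$, $|\Ein(T)|=\binom{k_0}{d}$ and the factorial identity. Your caveat $k_0\ge d$ is harmless but not actually needed, since both sides of the identity vanish when $2\le k_0<d$.
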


\begin{lemma}
\label{lem:QT_lower_prob}
With \(\P_T\)-probability at least \(1-\delta/2\),
\[
q(H^{(T)}; u_T)
\ge
s(n,k_0)
-
\frac{1}{k_0}\left[
\sqrt{2V(n,k_0)\log\left(\frac{4}{\delta}\right)}
+\frac{2}{3}U_d\log\left(\frac{4}{\delta}\right)
\right],
\]
where \(U_d\coloneqq (d-1)(d-2)\) and
\(
V(n,k_0) \coloneqq p(1-p)\sum_{e\in\Eout(T)} c_e(T)^2
\).
\end{lemma}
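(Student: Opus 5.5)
The plan is to apply Bernstein's inequality to the centered fluctuation term in the signal--noise decomposition \eqref{eq:signal_noise}. Under \(\P_T\), the variables \(\{H^{(T)}_e\}_{e\in\Eout(T)}\) are independent \(\mathrm{Bernoulli}(p)\). Therefore
\[
Z \coloneqq \sum_{e\in\Eout(T)} X_e,\qquad X_e\coloneqq c_e(T)\bigl(H^{(T)}_e-p\bigr),
\]
is a sum of independent, mean-zero random variables. Since \(s(n,k_0)\) is deterministic, the lemma is equivalent to a lower-tail bound \(Z\ge -\tau\), where \(\tau\) denotes the bracketed quantity. The event then fails with probability at most \(\delta/2\).

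First, I would verify the two inputs Bernstein's inequality needs.

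\emph{Uniform bound.} If \(e\in\Eout(T)\), then \(e\not\subset T\), so \(r=|e\cap T|\le d-1\). Hence \(0\le c_e(T)=r(r-1)\le (d-1)(d-2)=U_d\). Combined with \(|H^{(T)}_e-p|\le 1\), this gives \(|X_e|\le U_d\) almost surely.

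\emph{Variance.} By independence,
\[
\sum_{e\in\Eout(T)}\E_T[X_e^2]=\sum_{e\in\Eout(T)} c_e(T)^2\,p(1-p)=V(n,k_0).
\]

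Next, I would apply the one-sided Bernstein inequality to \(-Z\), which gives
\[
\P_T(Z\le -t)\le \exp\left(-\frac{t^2}{2(V+U_d t/3)}\right).
\]
To invert it, set \(L\coloneqq\log(4/\delta)\) and \(t\coloneqq\sqrt{2VL}+\tfrac{2}{3}U_dL\). Then \(t^2\ge 2VL+\tfrac{2}{3}U_d L\,t\), because the cross term satisfies \(2\sqrt{2VL}\cdot\tfrac23U_dL\ge0\) and the squared term \(\bigl(\tfrac{2}{3}U_dL\bigr)^2\) is at least \(\tfrac{2}{3}U_dL\cdot\tfrac{2}{3}U_dL\). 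This gives
\[
\frac{t^2}{2(V+U_dt/3)}\ge L.
\]
The only slightly delicate point is this algebraic step. Verifying \(t^2\ge 2L(V+U_dt/3)\) reduces to \(\tfrac{4}{9}U_d^2L^2+\tfrac{2}{3}U_dL\sqrt{2VL}\ge 0\), which holds trivially, so the constant \(2/3\) leaves some room.

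Hence \(\P_T(Z\le -t)\le \delta/4\le\delta/2\). Dividing by \(k_0\) and substituting into \eqref{eq:signal_noise} yields the claimed lower bound for \(q(H^{(T)};u_T)\) with \(\P_T\)-probability at least \(1-\delta/2\).

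Finally, I would note that nothing in the argument depends on the choice of \(T\) beyond \(|T|=k_0\). The bound is therefore uniform over all deterministic \(T\), as the subsequent type II analysis requires.
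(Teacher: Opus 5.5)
Your proof is correct and follows the paper's argument. It uses the same envelope \(|X_e|\le U_d\), the same variance proxy \(V(n,k_0)\), and the same choice \(t=\sqrt{2VL}+\frac23U_dL\) (as in Lemma~\ref{lem:bernstein_parameter_ineq}); the only difference is that you use the one-sided Bernstein tail, giving \(\delta/4\), where the paper uses the two-sided Lemma~\ref{lem:scalar-bernstein}, giving exactly \(\delta/2\). One small arithmetic slip: \(t^2-L\bigl(2V+\frac23U_dt\bigr)\) equals exactly \(\frac23U_dL\sqrt{2VL}\), because the \(\frac49U_d^2L^2\) terms cancel. This quantity is still nonnegative, so your conclusion stands.
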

\begin{proof}
For the second term in the right-hand side of \eqref{eq:signal_noise}, define
\[
Y_e \coloneqq c_e(T)(H^{(T)}_e-p), \qquad e\in\Eout(T).
\]
Under $\P_T$, the family \(\{Y_e\}_{e\in\Eout(T)}\) is independent and centered. Since
\(e\not\subset T\) implies \(|e\cap T|\le d-1\), we have
\begin{equation}
|Y_e|\le c_e(T)\le U_d,
\qquad
U_d = (d-1)(d-2).
\label{eq:Ye_l1}
\end{equation}
Moreover,
\[
V(n,k_0) = \sum_{e\in\Eout(T)}\Var_T(Y_e)=
p(1-p)\sum_{e\in\Eout(T)} c_e(T)^2.
\]
Since \(c_e(T)^2\le U_d\,c_e(T)\) by \eqref{eq:Ye_l1}, this becomes
\[
V(n,k_0)\le p(1-p)U_d\sum_{e\in\Eout(T)} c_e(T).
\]

Moreover, since \(c_e(T)\) is the number of ordered pairs \((i,j)\in T^2\) with \(i\neq j\) such that
\(\{i,j\}\subset e\), we may count
\begin{align}
\sum_{e\in\Eout(T)}c_e(T)
&=
\sum_{\substack{i,j\in T\\ i\neq j}}
\left|\{e\in\cE:\ \{i,j\}\subset e,\ e\nsubseteq T\}\right| \nonumber\\
&=
k_0(k_0-1)\left(\binom{n-2}{d-2}-\binom{k_0-2}{d-2}\right)
\le k_0^2\binom{n-2}{d-2}.
\label{eq:m_exact_detection_revised}
\end{align}
Therefore,
\begin{equation}
V(n,k_0)
\le
p(1-p)U_d\sum_{e\in\Eout(T)}c_e(T)
\le
p(1-p)U_d\,k_0^2\binom{n-2}{d-2}.
\label{eq:V_bd_detection_revised}
\end{equation}

Given \(V(n,k_0)\) and \(U_d\) from \eqref{eq:Ye_l1} and \eqref{eq:V_bd_detection_revised}, Lemma \ref{lem:scalar-bernstein} gives, for all $t>0$,
\begin{equation}
\P_T\left(\left|\sum_{e\in\Eout}Y_e\right|\ge t\right)
\le
2\exp\left(-\frac{t^2}{ 2V(n,k_0)+\frac{2}{3}U_d t }\right).
\label{eq:Y_bernstein_bound_init}
\end{equation}

Now fix \(\delta\in(0,1)\) and set
\[
\ell \coloneqq \log\left(\frac{4}{\delta}\right)>0,
\qquad
t \coloneqq \sqrt{2V(n,k_0)\,\ell}+\frac{2}{3}U_d\,\ell.
\]
Applying
Lemma~\ref{lem:bernstein_parameter_ineq} together with \eqref{eq:Y_bernstein_bound_init} gives
\[
\P_T\left(
\left|\sum_{e\in\Eout(T)}Y_e\right|
\ge
\sqrt{2V(n,k_0)\log\left(\frac{4}{\delta}\right)}
+\frac{2}{3}U_d\log\left(\frac{4}{\delta}\right)
\right)
\le \frac{\delta}{2}.
\]
Since, by \eqref{eq:signal_noise},
\[
q(H^{(T)}; u_T)
=
s(n,k_0)+\frac{1}{k_0}\sum_{e\in\Eout(T)}Y_e,
\]
the claim follows.
\end{proof}

Lemma~\ref{lem:QT_lower_prob} shows that \(q(H^{(T)};u_T)\) is bounded from below by a deterministic signal term minus two fluctuation terms. It remains to compare these three terms with the threshold \(t_1\). The next proposition verifies that, for all sufficiently large \(n\), the signal dominates both fluctuation terms.

\begin{proposition}
\label{prop:QT_exceeds_t1}
For all sufficiently large \(n\),
\[
s(n,k_0)
-
\frac{1}{k_0}\left[
\sqrt{2V(n,k_0)\ell}+\frac{2}{3}U_d\,\ell
\right]
\;>\; t_1.
\]
\end{proposition}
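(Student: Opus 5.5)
The plan is to compare the orders of magnitude of the signal term $s(n,k_0)$ from \eqref{eq:snk_eval}, the two fluctuation terms, and the threshold $t_1$ from \eqref{eq:t_1}, all as functions of $n$ with $k_0=\lfloor\kappa(d,p)\sqrt n\rfloor+1$. Since $d$, $p$ and $\delta$ are fixed, $\ell=\log(4/\delta)$ is a constant.

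First, bound the two fluctuation terms. By \eqref{eq:V_bd_detection_revised},
\[
\frac{1}{k_0}\sqrt{2V(n,k_0)\ell}\le \sqrt{2p(1-p)U_d\,\ell\binom{n-2}{d-2}}=O\big(n^{(d-2)/2}\big),
\]
and the other term satisfies $\frac{2}{3}U_d\ell/k_0=O(n^{-1/2})$. By contrast,
\[
t_1=C_{\alpha,d}\sqrt{n\binom{n-2}{d-2}p}\asymp n^{(d-1)/2}.
\]
So both fluctuation terms are $o(t_1)$. It therefore suffices to show $s(n,k_0)\ge(1+\epsilon)t_1$ for some fixed $\epsilon>0$ and all large $n$. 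In fact I expect the exact constant in $\kappa$ to give $s\approx 2t_1$.

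Second, compute the leading order of $s$ and compare it with $t_1$. Since $k_0\to\infty$, we have $(k_0-1)\binom{k_0-2}{d-2}=(1+o(1))\,k_0^{d-1}/(d-2)!$ and $\binom{n-2}{d-2}=(1+o(1))\,n^{d-2}/(d-2)!$. This gives
\[
s(n,k_0)=(1+o(1))\,\frac{(1-p)k_0^{d-1}}{(d-2)!},\qquad t_1=(1+o(1))\,C_{\alpha,d}\sqrt{\frac{p}{(d-2)!}}\,n^{(d-1)/2}.
\]
Because $k_0>\kappa\sqrt n$, we get $k_0^{d-1}>\kappa^{d-1}n^{(d-1)/2}=2C_{\alpha,d}\sqrt{(d-2)!}\,\frac{\sqrt p}{1-p}\,n^{(d-1)/2}$. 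Substituting,
\[
s(n,k_0)\ge(1+o(1))\,2C_{\alpha,d}\sqrt{\frac{p}{(d-2)!}}\,n^{(d-1)/2}=(2+o(1))\,t_1.
\]
Combined with the first step, the left-hand side of the proposition is at least $(2+o(1))t_1-o(t_1)>t_1$ for large $n$.

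The only delicate point is bookkeeping of the $(1+o(1))$ factors. To avoid any looseness I would use explicit one-sided bounds: the lower bound $(k_0-1)\binom{k_0-2}{d-2}\ge (k_0-d+1)^{d-1}/(d-2)!$ and the upper bound $\binom{n-2}{d-2}\le n^{d-2}/(d-2)!$. Since $(k_0-d+1)/k_0\to1$, the factor-$2$ margin absorbs these losses. I expect no real obstacle beyond this constant tracking. One caveat: the constant $C_d$ in the theorem statement should be identified with $C_{\alpha,d}$, and the factor $\sqrt{(d-2)!}$ is absorbed in the constant of \eqref{eq:k0_def_clean}.
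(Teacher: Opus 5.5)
Your proposal is correct and follows essentially the same route as the paper: the bound \eqref{eq:V_bd_detection_revised} makes both fluctuation terms $o(t_1)$, and the asymptotics $s(n,k_0)=(1+o(1))\frac{1-p}{(d-2)!}k_0^{d-1}$, combined with the choice of $\kappa(d,p)$, give $s(n,k_0)\ge(2+o(1))\,t_1$. Your explicit one-sided bounds $(k_0-1)\binom{k_0-2}{d-2}\ge (k_0-d+1)^{d-1}/(d-2)!$ and $\binom{n-2}{d-2}\le n^{d-2}/(d-2)!$ are a slightly more careful version of the paper's $(1+o(1))$ bookkeeping, not a different argument.
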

\begin{proof}
Since
\(
k_0 = \lfloor \kappa(d,p)\sqrt n \rfloor + 1,
\)
we have \(k_0=(1+o(1))\kappa(d,p)\sqrt n\). Since, by \eqref{eq:snk_eval},
\(
s(n,k_0)
=
(1+o(1))\frac{1-p}{(d-2)!}\,k_0^{d-1},
\)
we obtain
\[
s(n,k_0)
\ge
(1+o(1))\frac{1-p}{(d-2)!}\,\kappa(d,p)^{d-1}n^{\frac{d-1}{2}}.
\]
On the other hand,
\begin{equation}
\label{eq:t1_asymp}
t_1
=
C_{\alpha,d}\sqrt{n\binom{n-2}{d-2}p}
=
(1+o(1))C_{\alpha,d}\sqrt{\frac{p}{(d-2)!}}\,n^{\frac{d-1}{2}}.
\end{equation}
Recalling that
\[
\kappa(d,p)^{d-1}
=
2C_{\alpha,d}\sqrt{(d-2)!}\,\frac{\sqrt p}{1-p},
\]
we conclude that
\begin{equation}
s(n,k_0)\ge (2+o(1))\,t_1.
\label{eq:signal_2t1_detection_T}
\end{equation}

Next, using \eqref{eq:V_bd_detection_revised},
\begin{align}
\frac{1}{k_0}\sqrt{2V(n,k_0)\log\left(\frac{4}{\delta}\right)}
&\le
\sqrt{2p(1-p)U_d\binom{n-2}{d-2}\log\left(\frac{4}{\delta}\right)}
\nonumber\\
&=
\frac{\sqrt{2(1-p)U_d\log(4/\delta)}}{C_{\alpha,d}\sqrt n}\,t_1
=
o(t_1),
\label{eq:noise_small_detection_T}
\end{align}
where the second equality uses \eqref{eq:t1_asymp}. Moreover, the linear fluctuation term in Lemma~\ref{lem:QT_lower_prob} bound satisfies
\[
\frac{1}{k_0}\cdot\frac{2}{3}U_d\log\left(\frac{4}{\delta}\right)
\wle
\frac{2U_d}{3k_0}\log\left(\frac{4}{\delta}\right)
=
O\left(\frac{\log(1/\delta)}{\sqrt{n}}\left(\frac{1-p}{\sqrt p}\right)^{\frac1{d-1}}\right).
\]
To compare with $t_1$, by \eqref{eq:t1_asymp}, $t_1 \asymp \sqrt{p}\,n^{\frac{d-1}{2}}$ (for fixed $d$), so
\begin{equation}
\frac{1}{t_1}\cdot
\frac{2U_d}{3k_0}\log\left(\frac{4}{\delta}\right)
=
O\left(
\log(1/\delta)\,
\left(\frac{1-p}{\sqrt p}\right)^{\frac1{d-1}}
\frac{1}{\sqrt p\,n^{\frac d2}}
\right)
=o(1),
\label{eq:linear_small_detection_T}
\end{equation}
and therefore the linear fluctuation term is $o(t_1)$.

Combining \eqref{eq:signal_2t1_detection_T}, \eqref{eq:noise_small_detection_T}, and
\eqref{eq:linear_small_detection_T}, the claim follows.
\end{proof}

Combining this deterministic comparison with Lemma~\ref{lem:QT_lower_prob} yields the desired uniform type II bound.
\begin{lemma}
\label{lem:type_II_error}
For all sufficiently large \(n\),
\[
\sup_{k_0\le k\le n}\P_k(\|M\|\le t_1)\le \frac{\delta}{2}.
\]
\end{lemma}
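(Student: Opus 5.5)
We combine the coupling lemma with the two preceding estimates, and then average over the random planted set. Fix \(k\) with \(k_0\le k\le n\). The case \(k=k_0\) is immediate, taking \(T=S\) (or applying the argument below directly). Since \(\P_k\) is the mixture over uniformly random \(S\) of the conditional laws \(\P_S\), it suffices to bound \(\P_S(\|M\|\le t_1)\le\delta/2\) uniformly over deterministic \(S\) with \(|S|=k\). For such an \(S\), pick any deterministic \(T\subset S\) with \(|T|=k_0\).

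The coupling lemma \eqref{eq:coupling} then gives
\[
\P_S(\|M\|\le t_1)\le \P_T\bigl(q(H^{(T)};u_T)\le t_1\bigr).
\]
By Proposition~\ref{prop:QT_exceeds_t1}, for all \(n\ge n_0\) the deterministic lower bound
\[
s(n,k_0)-k_0^{-1}\Bigl[\sqrt{2V(n,k_0)\ell}+\tfrac23U_d\ell\Bigr]
\]
strictly exceeds \(t_1\). Lemma~\ref{lem:QT_lower_prob} shows that \(q(H^{(T)};u_T)\) is at least this quantity with \(\P_T\)-probability at least \(1-\delta/2\). Hence \(\P_T(q(H^{(T)};u_T)\le t_1)\le\delta/2\).

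The point to check is uniformity. The threshold \(n_0\) must not depend on \(T\), \(S\), or \(k\). This holds because every quantity in Proposition~\ref{prop:QT_exceeds_t1} depends only on \(n\), \(k_0\), \(d\), \(p\), and \(\delta\):

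\(s(n,k_0)\) is given exactly by \eqref{eq:snk_eval};

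\(V(n,k_0)\) enters only through the \(T\)-free upper bound \eqref{eq:V_bd_detection_revised};

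\(U_d\) is a constant.

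The Bernstein bound in Lemma~\ref{lem:QT_lower_prob} likewise holds for every fixed \(T\) with the same constants.

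Taking the supremum over \(S\), averaging over \(S\), and then taking the supremum over \(k\in[k_0,n]\) yields the claim. Together with Lemma~\ref{lem:type_I_error}, this gives \eqref{eq:type1_2} for large \(n\). I expect no real obstacle here; the only care needed is the uniformity bookkeeping just described.
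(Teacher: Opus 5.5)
Your proposal is correct and follows essentially the same route as the paper. The paper also conditions on a deterministic $S$ and picks $T\subset S$ with $|T|=k_0$. It then applies the coupling lemma and combines Lemma~\ref{lem:QT_lower_prob} with Proposition~\ref{prop:QT_exceeds_t1} to get $\P_T(q(H^{(T)};u_T)\le t_1)\le\delta/2$ for every such $T$, and finally averages over $S$. Your explicit check that the threshold on $n$ depends only on $(n,k_0,d,p,\delta)$, and your handling of $k=k_0$ (which the paper's coupling setup nominally excludes), make precise points the paper leaves implicit.
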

\begin{proof}   
Together, Lemma~\ref{lem:QT_lower_prob} and Proposition~\ref{prop:QT_exceeds_t1} imply, for all sufficiently large \(n\),
\[
q(H^{(T)}; u_T)
\;>\; t_1
\]
with \(\P_T\)-probability at least \(1-\delta/2\). Consequently,
\begin{equation}
\P_T(q(H^{(T)}; u_T)\le t_1)\le \frac{\delta}{2}
\label{eq:t1_comp}
\end{equation}
for every deterministic \(T\subset[n]\) with \(|T|=k_0\), provided \(n\) is large enough.

Recalling the coupling reduction \eqref{eq:coupling}, for every planted set \(S\) with \(|S|=k\ge k_0\), the bound \eqref{eq:t1_comp} gives
\[
\P_S(\|M\|\le t_1)\le \P_T(q(H^{(T)}; u_T)\le t_1)
 \le \frac{\delta}{2},
\]
and averaging over \(S\) gives the claim.
\end{proof}

\subsubsection{Conclusion of the proof}
By Lemmas \ref{lem:type_I_error} and \ref{lem:type_II_error}, for all sufficiently large \(n\),
\[
\P_0(\|M\|>t_1)\le \frac{\delta}{2}
\qquad\text{and}\qquad
\sup_{k_0\le k\le n}\P_k(\|M\|\le t_1)\le \frac{\delta}{2}.
\]
Hence
\[
\P_0(\|M\|>t_1)+\sup_{k_0\le k\le n}\P_k(\|M\|\le t_1)<\delta.
\]
Since \(\delta\in(0,1)\) was arbitrary, this proves that \(\{\varphi_n\}_{n\ge1}\) is asymptotically powerful.
\qed

\begin{proof}[Proof of Corollary~\ref{cor:pn_extensions_detection}]
Let \(p=p_n\), and define
\[
\kappa_n
\coloneqq
\left(2C_{\alpha,d}\sqrt{(d-2)!}\,\frac{\sqrt{p_n}}{1-p_n}\right)^{\frac1{d-1}},
\quad
k_0\coloneqq \lfloor \kappa_n\sqrt n \rfloor + 1,
\quad
t_1(n)\coloneqq C_{\alpha,d}\sqrt{n\binom{n-2}{d-2}p_n}.
\]

The proof of Theorem~\ref{thm:detection} applies verbatim after replacing \(p\) by \(p_n\), once the three estimates used there are re-checked.

First, since
\[
n\binom{n-2}{d-2}\asymp \frac{n^{d-1}}{(d-2)!},
\]
the assumption \(p_n=\Omega(n^{-(d-1)}\log n)\) implies \eqref{eq:alpha_cond} for all sufficiently large \(n\). Hence the type-I bound from Corollary~\ref{cor:lkc_mod} remains valid.

Second, \(k_0\to\infty\) and \(k_0=(1+o(1))\kappa_n\sqrt n\). Therefore, exactly as in the proof of Theorem~\ref{thm:detection},
\[
s(n,k_0)
=
(1-p_n)(k_0-1)\binom{k_0-2}{d-2}
\ge (2+o(1))\,t_1(n).
\]

Third, the square-root fluctuation term from Lemma~\eqref{lem:QT_lower_prob} bound satisfies, as before,
\[
\frac{1}{k_0}\sqrt{2V(n,k_0)\log\left(\frac{4}{\delta}\right)}
\le
\frac{\sqrt{2(1-p_n)U_d\log(4/\delta)}}{C_{\alpha,d}\sqrt n}\,t_1(n)
=
o\left(t_1(n)\right).
\]
For the linear term in Lemma~\eqref{lem:QT_lower_prob} bound, note that
\[
k_0=\Omega\left((\log n)^{1/(2(d-1))}\right)
\qquad\text{and}\qquad
t_1(n)=\Omega\left(\sqrt{\log n}\right),
\]
the latter because \(n\binom{n-2}{d-2}p_n=\Omega(\log n)\). Hence
\[
\frac{1}{t_1(n)}\cdot \frac{2U_d}{3k_0}\log\left(\frac{4}{\delta}\right)
=
o(1).
\]
Thus the linear term in the \(p_n\)-analogue is also \(o(t_1(n))\), and a version of Proposition~\ref{prop:QT_exceeds_t1} follows. The rest of the argument is identical to the proof of Theorem~\ref{thm:detection}, and yields asymptotic power.
\end{proof}

\subsection{Proof of Theorem \ref{thm:recovery_sim}}
\label{sec:proof_recovery}

Fix a planted set $S\subset[n]$ of size $k = k(n)$. Throughout this section, we work under the conditional law
$\P_S(\cdot)= \P(\cdot\mid S)$.  The expectation and variance with respect to $\P_S$ are denoted using $\E_S$ and $\Var_S$. Set
\[
M = A-\E_0[A],
\qquad
M^* \coloneqq \E_S[M].
\]
All generic perturbation and concentration tools used below are recorded in Appendix~\ref{sec:concentration_inequalities}. We assume throughout that the hypothesis \eqref{eq:recovery_thm_scaling} of Theorem~\ref{thm:recovery_sim} holds.

\subsubsection{Spectral analysis}
\label{sec:spectrum}

Let $\cE$ denote the collection of all $d$-subsets of $[n]$ (hyperedges). Define
\[
\Ein \coloneqq \{e\in\cE: e\subset S\},
\qquad
\Eout \coloneqq \cE\setminus\Ein,
\qquad
\Eout_i \coloneqq \{e\in\Eout: i\in e\}.
\]
Under $\P_S$, $\{H_e\}_{e\in\Eout}$ is a collection of independent random variables $H_e\sim\Ber(p)$, while for $e\in\Ein$, $H_e = 1$ almost surely.
Thus, for $i\neq j$,
\[
\E_S[A]_{ij}= \sum_{e\ni i,j}\E_S[H_e] = 
\begin{cases}
w_{\mathrm{in}},& \{i,j\}\subseteq S,\\
w_{\mathrm{out}},& \{i,j\}\not\subseteq S,
\end{cases}
\]
where
\begin{equation}
w_{\mathrm{in}}=\binom{k-2}{d-2}+p\left(\binom{n-2}{d-2}-\binom{k-2}{d-2}\right),
\qquad
w_{\mathrm{out}}=p\binom{n-2}{d-2}.
\label{eq:win_wout}
\end{equation}
Since  $\E_0[A]=w_{\mathrm{out}}(J-I)$, we obtain
\[
M^*= \E_S[M]
=(w_{\mathrm{in}}-w_{\mathrm{out}}) (\1_S\1_S^\top-I_S),
\]
where $I_S = \diag(\1_S)$. 
Let $\lambda\coloneqq \lambda_1(M)$ denote the leading eigenvalue of $M$.
The top eigenpair of $M^*$ is $(\lambda^*,u^*)$ with
\begin{equation} 
u^*=\frac{\1_S}{\sqrt{k}},
\qquad
\lambda^*=(w_{\mathrm{in}}-w_{\mathrm{out}})(k-1) = (1-p)(k-1)\binom{k-2}{d-2},
\label{eq:lambda_star}
\end{equation}
where the last equality is by \eqref{eq:win_wout}.
On the subspace spanned by the standard basis vectors indexed by $S^c$, the eigenvalue is zero with multiplicity $n-k$.
On the subspace of vectors supported on $S$ and orthogonal to $\1_S$, the eigenvalues equal $-(w_{\text{in}}-w_{\text{out}})$ with multiplicity $k-1$. The following lemma tracks the asymptotic scale of $\lambda^*$.

\begin{lemma}[$\lambda^*$-scale]\label{lem:lamstar_scale}
Fix \(d\ge 3\) and \(0<p<1\), and assume \eqref{eq:recovery_thm_scaling}. Then
\begin{equation}
\frac{\sqrt{p\,n^{d-1}}}{\lambda^*} =o(1), \quad
\frac{\sqrt{p\,k\,n^{d-2}\log n}}{\lambda^*}
=o(1), \quad\text{and}\quad
\frac{\sqrt{k} \log n}{\lambda^*}
=o(1).
\label{eq:lamstar_scale}
\end{equation}
Moreover, the same three conclusions hold with \(p\) replaced by \(p_n\), provided
\(\sup_n p_n<1\), \eqref{eq:recovery_thm_scaling} is interpreted with \(p_n\), and
\begin{equation}
\label{eq:p_n_lb}
p_n=\Omega\left(n^{-(d-1)}\log^{c_d} n\right)
\end{equation}
for some \(c_d\ge \dfrac{4(d-1)}{2d-3}\).
\end{lemma}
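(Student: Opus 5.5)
Since \(k\to\infty\) under \eqref{eq:recovery_thm_scaling}, I would start from the explicit formula \eqref{eq:lambda_star}: \(\lambda^*=(1-p)(k-1)\binom{k-2}{d-2}=(1+o(1))\frac{1-p}{(d-2)!}k^{d-1}\). For fixed \(d\), this gives \(\lambda^*\asymp(1-p)k^{d-1}\). It is also convenient to write \eqref{eq:recovery_thm_scaling} as \(k=\omega_n\,(p/(1-p))^{1/(2(d-1))}\sqrt n\) with \(\omega_n\to\infty\). Equivalently,
\[
k^{d-1}\;=\;\omega_n^{d-1}\sqrt{\frac{p}{1-p}}\;n^{\frac{d-1}{2}}.
\]
Each of the three ratios in \eqref{eq:lamstar_scale} then becomes an explicit power of \(k\), \(n\), \(p\) and \(1-p\), and I would check that each tends to zero.

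For the first ratio, \(\sqrt{p n^{d-1}}/\lambda^*\asymp \sqrt{p n^{d-1}}/((1-p)k^{d-1})\). Substituting the display above gives \(\asymp \omega_n^{-(d-1)}(1-p)^{-1/2}\), which tends to zero for fixed \(p<1\). The same holds for \(p_n\) with \(\sup_n p_n<1\).

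For the second ratio, the quantity is \(\sqrt{pkn^{d-2}\log n}/(k^{d-1}(1-p))\). Writing it as the first ratio times \(\sqrt{k\log n/n}\), it suffices that \(k\log n/n\) stays bounded, which fails only when \(k\) is close to \(n\). So I would not use this shortcut. Instead I would bound the square of the ratio directly:
\[
\frac{p n^{d-2}\log n}{(1-p)^2 k^{2d-3}}.
\]
Plugging in the lower bound \(k\ge \omega_n (p/(1-p))^{1/(2(d-1))}\sqrt n\) gives
\[
k^{2d-3}\ge \omega_n^{2d-3}\Bigl(\frac{p}{1-p}\Bigr)^{\frac{2d-3}{2(d-1)}} n^{\frac{2d-3}{2}}.
\]
The square of the ratio is therefore at most
\[
\omega_n^{-(2d-3)}\, p^{\frac{1}{2(d-1)}}(1-p)^{-O(1)}\, n^{\frac{d-2}{2}-\frac{2d-3}{2}}\log n
= O\!\left(\omega_n^{-(2d-3)}\,p^{\frac1{2(d-1)}}\,\frac{\log n}{n^{(d-1)/2}}\right).
\]
This tends to zero since \(d\ge3\). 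The \(p\)-dependence enters with a positive exponent, so it also works for any \(p_n\le \sup_n p_n<1\).

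For the third ratio, \(\sqrt k\log n/\lambda^*\asymp \log n/((1-p)k^{d-3/2})\), so it suffices that \(k^{d-3/2}\gg\log n\). For fixed \(p\) this is immediate, because \(k\gg\sqrt n\). In the sparse case this is the step I expect to be the main obstacle, because the lower bound on \(k\) degrades with \(p_n\). Using \(k\gtrsim p_n^{1/(2(d-1))}\sqrt n\) and \eqref{eq:p_n_lb}, we get \(p_n^{1/(2(d-1))}\sqrt n\gtrsim (\log n)^{c_d/(2(d-1))}\). Hence
\[
k^{d-3/2}\gtrsim (\log n)^{\frac{c_d(2d-3)}{4(d-1)}}\,\omega_n^{d-3/2}.
\]
The condition \(c_d\ge 4(d-1)/(2d-3)\) makes the exponent of \(\log n\) at least \(1\), and the factor \(\omega_n\to\infty\) then supplies the \(o(1)\). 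I would double-check that this is exactly where the constant \(4(d-1)/(2d-3)\) comes from, and that no ratio silently uses a positive lower bound on \(p\) through a negative power of \(p_n\). In the first ratio, \(p\) cancels exactly; in the other two it appears with nonnegative exponents.
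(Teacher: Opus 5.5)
Your argument is correct and is essentially the paper's proof: the paper sets $r_n = p_n^{1/(2(d-1))}\sqrt n/k = o(1)$ (your $\omega_n$ is essentially $r_n^{-1}$), uses $\lambda^*\asymp k^{d-1}$, and writes each ratio as a power of $r_n$ times explicit factors. As you anticipated, the condition $c_d\ge 4(d-1)/(2d-3)$ enters only in the third ratio, via $(p_n n^{d-1})^{(2d-3)/(4(d-1))}=\Omega(\log n)$.

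There is one arithmetic slip. In the squared second ratio the power of $n$ is $n^{(d-2)-\frac{2d-3}{2}}=n^{-1/2}$, not $n^{-(d-1)/2}$. The correct bound is therefore $O\bigl(\omega_n^{-(2d-3)}p^{\frac{1}{2(d-1)}}\log n/\sqrt n\bigr)$, which matches the square of the paper's $r_n^{d-3/2}p_n^{1/(4(d-1))}\sqrt{\log n}/n^{1/4}$. It still tends to zero, but for a reason unrelated to $d\ge 3$.
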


\begin{proof}
It is enough to prove the \(p_n\)-extension, since the fixed-\(p\) case is obtained by
taking \(p_n\equiv p\), for which the additional assumptions are automatic.

Set
\[
r_n \coloneqq\frac{p_n^{\frac{1}{2(d-1)}}\sqrt n}{k}.
\]
By \eqref{eq:recovery_thm_scaling}, we have \(r_n=o(1)\). Moreover,
\[
p_n^{\frac{1}{2(d-1)}}\sqrt n
=\Omega \left(\log^{\frac{c_d}{2(d-1)}}n\right),
\]
so \(k\to\infty\). Since \(\sup_n p_n<1\), there exists \(\eta>0\) such that
\(1-p_n\ge \eta\) for all \(n\). Hence, by \eqref{eq:lambda_star},
\[
\lambda^*
=(1-p_n)(k-1)\binom{k-2}{d-2}
\asymp k^{d-1}.
\]
Therefore
\[
\frac{\sqrt{p_n\,n^{d-1}}}{\lambda^*}
\asymp
\frac{\sqrt{p_n}\,n^{\frac{d-1}{2}}}{k^{d-1}}
=
r_n^{\,d-1}
=o(1).
\]
Likewise,
\[
\frac{\sqrt{p_n\,k\,n^{d-2}\log n}}{\lambda^*}
\asymp
\frac{\sqrt{p_n}\,n^{\frac{d-2}{2}}\sqrt{\log n}}{k^{d-\frac32}}
=
r_n^{\,d-\frac32}\,
p_n^{\frac{1}{4(d-1)}}\,
\frac{\sqrt{\log n}}{n^{1/4}}
=o(1),
\]
because \(\sup_n p_n<1\) implies
\(p_n^{1/(4(d-1))}=O(1)\), while \(\sqrt{\log n}/n^{1/4} = o(1)\).
Finally,
\[
\frac{\sqrt{k}\,\log n}{\lambda^*}
\asymp
\frac{\log n}{k^{d-\frac32}}
=
r_n^{\,d-\frac32}\,
\frac{\log n}{
p_n^{\frac{2d-3}{4(d-1)}}\,n^{\frac{2d-3}{4}}}.
\]
Using \eqref{eq:p_n_lb},
the denominator on the right-hand side becomes
\[
p_n^{\frac{2d-3}{4(d-1)}}\,n^{\frac{2d-3}{4}}
=
\Omega \left(
\log^{\frac{c_d(2d-3)}{4(d-1)}}n
\right),
\]
Hence
\[
\frac{\sqrt{k}\,\log n}{\lambda^*}
=
o \left(
\log^{\,1-\frac{c_d(2d-3)}{4(d-1)}}n
\right)
=
o(1),
\]
because \(c_d\ge \dfrac{4(d-1)}{2d-3}\).
\end{proof}

Define the event
\begin{equation}
E_1 \coloneqq \left\{\|M-M^*\|\le C_{1,d}\sqrt{n\binom{n-2}{d-2}p}\right\},
\label{eq:M_concentration}
\end{equation}
where \(C_{1,d}\) is the constant from Lemma~\ref{lem:lkc_mod} with \(\alpha = 1\).
\begin{claim}\label{clm:PSE1} For all sufficiently large \(n\),
\begin{equation}
\P_S(E_1)\ge 1-2n^{-10}.
\end{equation}
\end{claim}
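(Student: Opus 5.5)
The plan is to apply Lemma~\ref{lem:lkc_mod} (equivalently its Corollary~\ref{cor:lkc_mod}) with \(\alpha=1\) directly to the conditional law \(\P_S\). Under \(\P_S\), we have the decomposition
\[
M-M^*=A-\E_S[A]=\sum_{e\in\Eout}(H_e-p)\,B_e ,
\]
where \(B_e\) is the symmetric zero-diagonal matrix with ones on the pairs \(\{i,j\}\subset e\). The hyperedges in \(\Ein\) are deterministic and cancel in the centering. This is precisely the centered adjacency matrix of a \(d\)-uniform hypergraph whose hyperedges are independent \(\Ber(p)\) on \(\Eout\) and absent (deterministically zero fluctuation) on \(\Ein\). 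It is the setting Corollary~\ref{cor:lkc_mod} is formulated for, with a general planted set \(S\); the type I case used \(S=\emptyset\).

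The first step is to verify the density hypothesis \eqref{eq:alpha_cond} of Lemma~\ref{lem:lkc_mod}. For fixed \(p\in(0,1)\) we have \(n\binom{n-2}{d-2}p\asymp n^{d-1}\gg\log n\). In the sparse extension, \(p_n\gtrsim n^{-(d-1)}\log^{c_d}n\) gives \(n\binom{n-2}{d-2}p_n\gtrsim \log^{c_d} n\), which dominates \(\log n\) because \(c_d\ge 4(d-1)/(2d-3)>1\) for \(d\ge3\). So the condition holds for all sufficiently large \(n\).

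The second step is to read off the tail bound. The lemma gives \(\|M-M^*\|\le C_{1,d}\sqrt{n\binom{n-2}{d-2}p}\) with failure probability at most \(2n^{-10}\) at \(\alpha=1\); this matches the normalization in which \(\alpha\) tunes the exponent, consistent with the \(4n^{-11}\) bound obtained at the constant \(C_{\alpha,d}\) in Lemma~\ref{lem:type_I_error}. The only point to check is that the variance proxy used in the lemma is not enlarged by conditioning on \(S\). Removing the \(\Ein\) hyperedges only deletes independent summands, so every row-variance and matrix-variance quantity \(\sum_e \Var(H_e)B_e^2\) is dominated entrywise (in the PSD order) by its null counterpart \(p(1-p)\sum_{e\in\cE}B_e^2\). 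The uniform bound \(\|B_e\|\le d-1\) is unchanged.

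The main obstacle, if Lemma~\ref{lem:lkc_mod} is stated only for the null model, is exactly this monotonicity. If needed, I would rerun its proof, presumably a matrix Bernstein or moment/net argument, with the summation restricted to \(\Eout\), noting that each step uses only independence, centering, and upper bounds on variances. All of these are inherited by a subfamily of the summands.
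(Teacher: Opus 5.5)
Your approach matches the paper's proof: under $\P_S$ you write $M-M^*=A-\E_S[A]$ and apply Corollary~\ref{cor:lkc_mod} with $\alpha=1$, and you correctly check that \eqref{eq:alpha_cond} holds for large $n$. One correction: in Lemma~\ref{lem:lkc_mod} the parameter $\alpha$ only affects $C_{\alpha,d}$ and the density condition, not the exponent. The failure probability is $4n^{-11}$ for every $\alpha$, and the claimed $2n^{-10}$ follows from $4n^{-11}\le 2n^{-10}$ for $n\ge 2$. Also, your monotonicity worry is unnecessary, since the lemma is already stated for arbitrary deterministic index sets, which is exactly how the corollary covers $\P_S$.
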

\begin{proof}
    This is a direct consequence of Corollary~\ref{cor:lkc_mod} and the fact that \(4n^{-11} \le 2n^{-10}\) for all \(n\ge 2\).
\end{proof}

\begin{lemma}\label{lem:Delta0_vanish}
Fix \(d\ge 3\) and \(0<p<1\). Define
\[
\Delta_0 \;\coloneqq\; \frac{\|M-M^*\|}{\lambda^*},
\]
and assume \eqref{eq:recovery_thm_scaling}. Then, on \(E_1\),
\(
\Delta_0=o(1).
\)
\end{lemma}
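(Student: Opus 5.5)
On the event \(E_1\) defined in \eqref{eq:M_concentration}, the definition of \(\Delta_0\) gives directly
\[
\Delta_0 \;\le\; \frac{C_{1,d}\sqrt{n\binom{n-2}{d-2}p}}{\lambda^*}.
\]
So the proof reduces to a deterministic asymptotic comparison between the right-hand side and the first scale estimate in Lemma~\ref{lem:lamstar_scale}. No further probabilistic input is needed, since the high-probability content of \(E_1\) is already recorded in Claim~\ref{clm:PSE1}.

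The first step is to bound the binomial coefficient. For fixed \(d\ge 3\),
\[
\binom{n-2}{d-2}\le \frac{n^{d-2}}{(d-2)!}.
\]
Hence
\[
n\binom{n-2}{d-2}p\le \frac{p\,n^{d-1}}{(d-2)!},
\qquad\text{and so}\qquad
\Delta_0\le \frac{C_{1,d}}{\sqrt{(d-2)!}}\cdot\frac{\sqrt{p\,n^{d-1}}}{\lambda^*}.
\]
The second step invokes the first conclusion of \eqref{eq:lamstar_scale} in Lemma~\ref{lem:lamstar_scale}, which holds under \eqref{eq:recovery_thm_scaling}: \(\sqrt{p\,n^{d-1}}/\lambda^*=o(1)\). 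Since \(C_{1,d}\) and \((d-2)!\) depend only on \(d\), the prefactor is a constant, and it follows that \(\Delta_0=o(1)\) on \(E_1\).

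As a sanity check on that lemma (so nothing hidden is needed here), recall \(\lambda^*=(1-p)(k-1)\binom{k-2}{d-2}\asymp k^{d-1}\) because \(k\to\infty\). Then the ratio equals, up to constants, \(\bigl(p^{1/(2(d-1))}\sqrt n/k\bigr)^{d-1}\), which tends to zero by \eqref{eq:recovery_thm_scaling}.

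There is essentially no obstacle. The only points needing care are that the constant in the definition of \(E_1\) is independent of \(n\), and that the bound is deterministic on \(E_1\), so the \(o(1)\) is uniform over outcomes in \(E_1\). The same argument works verbatim with \(p=p_n\), using the \(p_n\)-version of Lemma~\ref{lem:lamstar_scale}.
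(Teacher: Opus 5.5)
Your proposal is correct and follows the paper's route: on $E_1$ you bound $\Delta_0$ by $C_{1,d}\sqrt{n\binom{n-2}{d-2}p}/\lambda^*$, absorb the binomial coefficient into a power of $n$ (your extra factor $1/(d-2)!$ is harmless), and conclude with the first scale relation of Lemma~\ref{lem:lamstar_scale}.
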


\begin{proof}
On \(E_1\), \eqref{eq:M_concentration} gives
\[
\Delta_0
\le
\frac{C_{1,d}\sqrt{p}\,\sqrt{n\binom{n-2}{d-2}}}{\lambda^*}
\le
C_{1,d}\frac{\sqrt{p\,n^{d-1}}}{\lambda^*}
=
o(1),
\]
where we used \(\binom{n-2}{d-2}\le n^{d-2}\) and
Lemma~\ref{lem:lamstar_scale}.
\end{proof}
\subsubsection{Entrywise bound via a one-step proxy}

In this subsection, we derive sufficient conditions that guarantee that Algorithm \ref{alg:spectral_recovery_sim} recovers the planted clique. Let $(\lambda,u)$ denote the leading eigenpair of $M$ and let $(\lambda^*,u^*)$ be as defined in \eqref{eq:lambda_star}.
Define
\begin{equation}
\alpha_n
\coloneqq
\left\|\frac{Mu^*}{\lambda^*}-u^*\right\|_\infty,
\qquad
\beta_n \coloneqq \left\|u-\frac{Mu^*}{\lambda^*}\right\|_\infty.
\label{eq:alpha_beta_def}
\end{equation}

\begin{lemma}
\label{lem:suff_cond1}
For \((A,S)\sim \HPC(n,d,k,p)\), suppose there exists a deterministic sequence
\((\varepsilon_n)_{n\ge1}\) with \(\varepsilon_n\to0\) such that
\[
\P_S\left(\alpha_n+\beta_n \le \frac{\varepsilon_n}{\sqrt{k}}\right)\to 1
\qquad\text{as }n\to\infty.
\]
Then Algorithm~\ref{alg:spectral_recovery_sim} exactly recovers the clique \(S\), i.e.,
\(
\P_k(\hat S=S)\to 1.
\)
\end{lemma}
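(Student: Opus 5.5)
The plan is to use the triangle inequality to turn the proxy bound into an entrywise bound on $u-u^*$, and then read off exact recovery deterministically.

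\textbf{Step 1 (triangle inequality).} For any unit eigenvector $u$,
\[
\|u-u^*\|_\infty \le \left\|u-\frac{Mu^*}{\lambda^*}\right\|_\infty+\left\|\frac{Mu^*}{\lambda^*}-u^*\right\|_\infty = \alpha_n+\beta_n .
\]
So on the event $G_n\coloneqq\{\alpha_n+\beta_n\le \varepsilon_n/\sqrt{k}\}$ we have $\|u-u^*\|_\infty\le \varepsilon_n/\sqrt k$.

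\textbf{Step 2 (the estimator is exact on $G_n$).} Since $u_i^*=k^{-1/2}$ for $i\in S$ and $u_i^*=0$ for $i\notin S$, on $G_n$ we get:
\begin{itemize}
\item[] $|u_i|\ge (1-\varepsilon_n)/\sqrt k$ for every $i\in S$;
\item[] $|u_j|\le \varepsilon_n/\sqrt k$ for every $j\notin S$.
\end{itemize}
Once $n$ is large enough that $\varepsilon_n<1/2$, every coordinate in $S$ strictly dominates every coordinate outside $S$ in absolute value. The $k$ largest entries of $(|u_i|)$ are therefore exactly the indices in $S$, regardless of tie-breaking, since there are no ties across the two groups. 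Hence $G_n\subseteq\{\hat S=S\}$.

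\textbf{Sign ambiguity.} The eigenvector $u$ is only defined up to sign. The hypothesis is stated for the $u$ returned by the algorithm; if it only held for one sign, the argument is unaffected because the algorithm uses $|u_i|$. So I would remark that $G_n$ may be read as holding for a suitable choice of sign.

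\textbf{Step 3 (averaging over $S$).} This gives $\P_S(\hat S=S)\ge \P_S(G_n)\to1$. To pass to $\P_k$, which averages over a uniformly random $S$, I need the convergence to be uniform over $S$.

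\begin{itemize}
\item[] \emph{Symmetry route.} By permutation invariance of the model, $\P_S(G_n)$ does not depend on $S$: relabeling vertices maps the law under $S$ to the law under $\pi(S)$, and $\alpha_n,\beta_n$ are permutation-equivariant sup-norms.
\item[] \emph{Fallback.} Alternatively, interpret the hypothesis as holding uniformly in $|S|=k$.
\end{itemize}
Then $\P_k(\hat S=S)=\E[\P_S(\hat S=S)]\to1$.

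The only delicate point I anticipate is this uniformity/sign bookkeeping, and the symmetry argument handles it.
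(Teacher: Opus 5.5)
Your proposal is correct and follows essentially the same route as the paper: the triangle inequality gives $\|u-u^*\|_\infty\le\alpha_n+\beta_n$, and once $\varepsilon_n<1/2$ the separation $\min_{i\in S}|u_i|\ge(1-\varepsilon_n)/\sqrt{k}>\varepsilon_n/\sqrt{k}\ge\max_{j\notin S}|u_j|$ forces $\hat S=S$, after which one averages over $S$. Your extra remark on uniformity in $S$ in the averaging step addresses a point the paper passes over silently; it is harmless there, because the bound $\P_S(G)\ge 1-10n^{-10}$ used in the application does not depend on $S$.
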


\begin{proof}
Algorithm~\ref{alg:spectral_recovery_sim} exactly recovers \(S\) if and only if
\[
\min_{i\in S}|u_i|>\max_{j\notin S}|u_j|.
\]
By the triangle inequality,
\[
\|u-u^*\|_\infty\le \alpha_n+\beta_n.
\]
Since \(u^*=\1_S/\sqrt{k}\), we have \(u_i^*=1/\sqrt{k}\) for \(i\in S\) and \(u_j^*=0\) for \(j\notin S\). Hence
\begin{align}
|u_i|
&\ge |u_i^*|-\|u-u^*\|_\infty
\ge \frac1{\sqrt{k}}-(\alpha_n+\beta_n)
\qquad \text{for } i\in S, \nonumber\\
|u_j|
&\le |u_j^*|+\|u-u^*\|_\infty
\le \alpha_n+\beta_n
\qquad \text{for } j\notin S.
\label{eq:u_in_u_out_sep}
\end{align}
Choose \(n\) large enough that \(\varepsilon_n<1/2\). On the event
\[
\left\{\alpha_n+\beta_n \le \frac{\varepsilon_n}{\sqrt{k}}\right\},
\]
it follows from \eqref{eq:u_in_u_out_sep} that
\[
\min_{i\in S}|u_i|
\ge \frac{1-\varepsilon_n}{\sqrt{k}}
>
\frac{\varepsilon_n}{\sqrt{k}}
\ge \max_{j\notin S}|u_j|.
\]
Therefore
\[
\P_S(\hat S=S)
\ge
\P_S\left(\alpha_n+\beta_n \le \frac{\varepsilon_n}{\sqrt{k}}\right)
\to 1.
\]
Averaging over the uniform choice of the clique \(S\) of size \(k\) yields
\[
\P_k(\hat S=S)
=
\E_k\left[\P(\hat S=S\mid S)\right]
=
\E_k\left[\P_S(\hat S=S)\right]
\to 1,
\]
which proves exact recovery.
\end{proof}

It remains to bound both the terms $\alpha_n$ and $\beta_n$. While $\alpha_n$ is easy to bound (see Lemma \ref{lem:pivot-Mu-star}), the term $\beta_n$ requires significant work. To begin, we introduce a few definitions. For each \(m\in[n]\), let \(M^{(-m)}\) be the \emph{leave--one--out} matrix obtained from \(M\) by removing the contribution of all hyperedges incident to \(m\), and set
\[
B^{(m)}\coloneqq M-M^{(-m)}.
\]
Since \(M^{(-m)}\) is symmetric, all its eigenvalues are real. By construction, \(M^{(-m)}_{m:}=\0\), so \(0\) is an eigenvalue of \(M^{(-m)}\). Moreover,
\(M^{(-m)}\) has zero diagonal, hence \(\tr(M^{(-m)})=0\). Therefore, if \(M^{(-m)}\neq \0\), we have \(\lambda_1(M^{(-m)})>0\), because all eigenvalues cannot be nonpositive. Since any eigenvector corresponding to a nonzero eigenvalue
of \(M^{(-m)}\) has zero \(m\)th coordinate, every leading unit eigenvector \(v\) of \(M^{(-m)}\) satisfies
\[
v_m=0.
\]
Fix once and for all a measurable tie-break rule that selects, for each \(m\in[n]\), a leading unit eigenvector \(\bar u^{(-m)}\) of \(M^{(-m)}\) satisfying
\[
\bar u_m^{(-m)}=0.
\]
If \(M^{(-m)}=0\), we use the same measurable tie-breaker among unit vectors satisfying
\(\bar u_m^{(-m)}=0\). We then define \(u^{(-m)}\) by imposing the sign convention from Remark~\ref{rem:sign} on \(\bar u^{(-m)}\). Thus
\[
u^{(-m)} = s_m \bar u^{(-m)}
\qquad\text{for some } s_m\in\{\pm1\}.
\]

\begin{remark}[Sign convention]
\label{rem:sign}
Eigenvectors are defined only up to a global sign. Since Algorithm~\ref{alg:spectral_recovery_sim} depends on the leading eigenvector only through the magnitudes \((|u_i|)_{i\in[n]}\), its output is unchanged if \(u\) is replaced by \(-u\). Accordingly, throughout the proof we choose the signs of \(u\) and \(u^{(-m)}\) so that
\[
\langle u,u^*\rangle \ge 0,
\qquad
\langle u^{(-m)},u\rangle \ge 0
\quad\text{for every }m\in[n],
\]
and on the tie events \(\{\langle u,u^*\rangle=0\}\) or \(\{\langle u^{(-m)},u\rangle=0\}\) we use a measurable tie-breaker.
\end{remark}

For each \(m\in[n]\), define the hyperedge subsets
\begin{equation}
\cE_m\coloneqq \{e\in\tbinom{[n]}{d}: m\in e\},
\qquad
\cE_{m,\mathrm{in}}\coloneqq \{e\in\cE_m: e\subset S\},
\qquad
\cE_{m,\mathrm{out}}\coloneqq \cE_m\setminus \cE_{m,\mathrm{in}},
\label{eq:m-in-def}
\end{equation}
and the generated sigma algebras
\[
\cF^{(m)}\coloneqq \sigma\left(\{H_e: e\in\cE_m\}\right),
\qquad
\cF^{(-m)}\coloneqq \sigma\left(\{H_e: e\notin\cE_m\}\right).
\]
Since \(\bar u^{(-m)}\) is a deterministic function of \(M^{(-m)}\), it is \(\cF^{(-m)}\)-measurable.

\begin{proposition}\label{prop:chain-Mu-star}
Suppose $\Delta_0\le \frac12$, and define
\[
r_n
\coloneqq
\dfrac{1}{\lambda^*}\left(\max_{m\in[n]} \left|M_{m:}(u^{(-m)}-u^*)\right|
+
\|M\|_{2\to\infty}\max_{m\in[n]}\|u-u^{(-m)}\|_2\right).
\]
Then,
\[
\left\| u-\dfrac{Mu^*}{\lambda^*} \right\|_\infty
\le
2\Delta_0\dfrac{\|Mu^*\|_\infty}{\lambda^*}\,+2r_n.
\]
\end{proposition}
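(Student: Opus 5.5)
The plan is to use the eigen-equation $Mu=\lambda u$ to rewrite $u$ as $Mu/\lambda$. The difference $u-Mu^*/\lambda^*$ then splits into a part driven by $M(u-u^*)$ and a part driven by the eigenvalue mismatch $\lambda-\lambda^*$. First I control $\lambda$ relative to $\lambda^*$. By \eqref{eq:lambda_star}, the spectrum of $M^*$ consists of $\lambda^*>0$, the eigenvalue $0$, and $-(w_{\mathrm{in}}-w_{\mathrm{out}})<0$. Hence $\lambda^*=\lambda_1(M^*)$. Weyl's inequality (Appendix~\ref{sec:concentration_inequalities}) gives $|\lambda-\lambda^*|\le\|M-M^*\|=\Delta_0\lambda^*$. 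Under $\Delta_0\le\frac12$ this yields $\lambda\ge(1-\Delta_0)\lambda^*\ge\lambda^*/2>0$. In particular $\lambda\neq0$, so $u=Mu/\lambda$ is legitimate.

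Next, I write the algebraic decomposition
\[
u-\frac{Mu^*}{\lambda^*}=\frac{Mu}{\lambda}-\frac{Mu^*}{\lambda^*}
=\frac{M(u-u^*)}{\lambda}+Mu^*\,\frac{\lambda^*-\lambda}{\lambda\lambda^*}.
\]
For the second term, I bound each coordinate by
\[
\|Mu^*\|_\infty\frac{|\lambda-\lambda^*|}{\lambda\lambda^*}\le\|Mu^*\|_\infty\frac{\Delta_0\lambda^*}{(\lambda^*/2)\lambda^*}=2\Delta_0\frac{\|Mu^*\|_\infty}{\lambda^*}.
\]
This is exactly the first term of the claim.

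For the first term, I fix a coordinate $m$ and insert the leave--one--out vector:
\[
M_{m:}(u-u^*)=M_{m:}(u^{(-m)}-u^*)+M_{m:}(u-u^{(-m)}).
\]
The first piece is at most $\max_m|M_{m:}(u^{(-m)}-u^*)|$. By Cauchy--Schwarz, the second is at most $\|M_{m:}\|_2\|u-u^{(-m)}\|_2\le\|M\|_{2\to\infty}\max_m\|u-u^{(-m)}\|_2$. Summing, $|M_{m:}(u-u^*)|\le\lambda^* r_n$. Dividing by $\lambda\ge\lambda^*/2$ gives the bound $2r_n$, uniformly in $m$.

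Taking the maximum over $m$ and combining the two estimates proves the proposition. The sign conventions of Remark~\ref{rem:sign} play no role here, since the bound holds for whichever signs were fixed. The only step that needs any care is confirming that $\lambda^*$ is indeed the \emph{top} eigenvalue of $M^*$, so that Weyl's inequality compares $\lambda_1(M)$ with $\lambda^*$, and that $\lambda$ is bounded away from zero. Both follow from the explicit spectrum of $M^*$ and the assumption $\Delta_0\le\frac12$; the remainder is bookkeeping.
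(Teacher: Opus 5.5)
Your proposal is correct and follows essentially the same route as the paper: Weyl's inequality gives $\lambda\ge(1-\Delta_0)\lambda^*\ge\lambda^*/2$, you split $Mu/\lambda-Mu^*/\lambda^*$ into the eigenvalue-mismatch term $Mu^*(1/\lambda-1/\lambda^*)$ and $M(u-u^*)/\lambda$, and you bound the latter row by row by inserting $u^{(-m)}$ and applying Cauchy--Schwarz. Your explicit check that $\lambda^*=\lambda_1(M^*)$ is something the paper takes for granted; it uses $w_{\mathrm{in}}>w_{\mathrm{out}}$, which holds for $k\ge d$ and $p<1$.
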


\begin{proof}
By Weyl's inequality \eqref{ineq:weyl}, applied with $i = 1$,
\(
|\lambda-\lambda^*|\le \|M-M^*\|=\Delta_0 \lambda^*.
\)
Since $\lambda^*>0$ and $\Delta_0\le\frac12$, we obtain $\lambda\ge (1-\Delta_0)\lambda^*>0.$
Therefore,
\begin{equation}
\left|\frac1\lambda-\frac1{\lambda^*}\right|
=  \frac{|\lambda-\lambda^*|}{|\lambda| \lambda^*}
\wle  \frac{\Delta_0}{(1-\Delta_0)\lambda^*}
\wle  \frac{2\Delta_0}{\lambda^*}.
\label{eq:lambda_bound}
\end{equation}
Using $Mu=\lambda u$, the triangle inequality yields
\[
\left\|u-\frac{Mu^*}{\lambda^*}\right\|_\infty
=
\left\|\frac{Mu}{\lambda}-\frac{Mu^*}{\lambda^*}\right\|_\infty
\le
\left|\frac1\lambda-\frac1{\lambda^*}\right|\|Mu^*\|_\infty+\frac1{|\lambda|}\|M(u-u^*)\|_\infty,
\]
and \eqref{eq:lambda_bound} gives
\begin{equation}
\left\|u-\frac{Mu^*}{\lambda^*}\right\|_\infty
\le
\frac{2\Delta_0}{\lambda^*}\|Mu^*\|_\infty + \frac{2}{\lambda^*}\|M(u-u^*)\|_\infty.
\label{eq:inf_norm_u_one_step}
\end{equation}

Fix $m\in[n]$. Using the leave--one--out vector $u^{(-m)}$,
\[
|[M(u-u^*)]_m|
=|M_{m:}(u-u^*)|
\le |M_{m:}(u^{(-m)}-u^*)|
   +|M_{m:}(u-u^{(-m)})|.
\]
Using the Cauchy--Schwarz inequality \eqref{ineq:cs}, and the definition of the two-to-infinity norm,
\[
|M_{m:}(u-u^{(-m)})|
\le \|M_{m:}\|_2 \|u-u^{(-m)}\|_2
\le \|M\|_{2\to\infty} \|u-u^{(-m)}\|_2.
\]
Taking maxima over $m$ gives
\begin{equation}
\|M(u-u^*)\|_\infty
\le \max_m |M_{m:}(u^{(-m)}-u^*)|
+\|M\|_{2\to\infty} \max_m\|u-u^{(-m)}\|_2
= \lambda^* r_n.
\label{eq:RM_bound}
\end{equation}
Substituting into \eqref{eq:inf_norm_u_one_step} concludes the proof.
\end{proof}

Thus, to bound \(\beta_n\) via Proposition~\ref{prop:chain-Mu-star}, it suffices to control
\begin{equation}
\|Mu^*\|_\infty, \quad
\|M\|_{2\to\infty}, \quad
\max_{m\in[n]} \left|M_{m:}(u^{(-m)}-u^*)\right|, \quad
\max_{m\in[n]}\|u-u^{(-m)}\|_2.
\label{eq:key_quantities}
\end{equation}
The leave--one--out construction helps tackle the dependencies in the last two terms of \eqref{eq:key_quantities}. By construction, \(\bar u^{(-m)}\) is measurable with respect to hyperedges not containing \(m\), whereas \(M_{m:}\) is measurable with respect to hyperedges containing \(m\); under \(\P_S\) these families are independent. The perturbation matrix \(B^{(m)}\) and \(\max_{m\in[n]}\|u-u^{(-m)}\|_2\) are analyzed in Sections~\ref{sec:Bm_u_bound_strategy} and \ref{sec:loo_ev_bound}. These are applied in Section~\ref{sec:row-wise} to obtain a bound on the row concentration term \(\max_{m\in[n]} \left|M_{m:}(u^{(-m)}-u^*)\right|\), first for \(\bar u^{(-m)}\) and then, by the sign argument, for \(u^{(-m)}\). Section~\ref{sec:recovery_conclusion} combines the bounds on each quantity in \eqref{eq:key_quantities} to apply Proposition~\ref{prop:chain-Mu-star}.

\subsubsection{Bounds on the leave--one--out perturbations}
\label{sec:Bm_u_bound_strategy}
We proceed to obtain bounds on the operator norm of the perturbation \(B^{(m)}=M-M^{(-m)}\) and consequently analyze the leading eigenvectors of \(M\) and \(M^{(-m)}\).

\begin{lemma}[Leave--one--out eigenvector comparison]\label{lem:loo-evec-comparison-DK}
Assume that \(\lambda_1(M^{(-m)})\) has multiplicity one and that
\[
\delta_m(\lambda)\coloneqq \min_{i\ge 2}\left| \lambda_i(M^{(-m)})-\lambda \right|>0.
\]
Then
\begin{equation}\label{eq:u-um_l2}
\|u-u^{(-m)}\|_2
\le \frac{\sqrt{2}}{\delta_m(\lambda)} \|B^{(m)}u\|_2,
\end{equation}
where
\begin{equation}
\delta_m(\lambda)
\wge  \lambda^* - 2\|M-M^*\| - \|B^{(m)}\|.
\label{eq:delta_m_def}
\end{equation}
\end{lemma}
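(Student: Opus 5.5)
This is a Davis--Kahan argument in which $u$ serves as an approximate eigenvector of $M^{(-m)}$ with approximate eigenvalue $\lambda$. Write $M^{(-m)} = M - B^{(m)}$, so that
\[
M^{(-m)}u = \lambda u - B^{(m)}u,
\qquad\text{i.e.}\qquad
(M^{(-m)}-\lambda I)u = -B^{(m)}u .
\]
Let $P$ be the orthogonal projector onto the complement of the span of $u^{(-m)}$; $P$ is spanned by the eigenvectors of $M^{(-m)}$ for $\lambda_i$, $i\ge2$, because $\lambda_1(M^{(-m)})$ is simple. Since $P$ commutes with $M^{(-m)}$, applying it gives $(M^{(-m)}-\lambda I)Pu=-PB^{(m)}u$. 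On the range of $P$ the operator $M^{(-m)}-\lambda I$ has all eigenvalues of modulus at least $\delta_m(\lambda)>0$, hence
\[
\|Pu\|_2\le \frac{\|B^{(m)}u\|_2}{\delta_m(\lambda)} .
\]
Here $\|Pu\|_2=\sin\theta$, with $\theta$ the angle between $u$ and $u^{(-m)}$. The sign convention of Remark~\ref{rem:sign} gives $\langle u,u^{(-m)}\rangle\ge0$, so
\[
\|u-u^{(-m)}\|_2^2 = 2-2\cos\theta \le 2(1-\cos^2\theta)=2\sin^2\theta .
\]
This yields \eqref{eq:u-um_l2} with the factor $\sqrt2$.

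For the gap bound \eqref{eq:delta_m_def}, we lower-bound $|\lambda_i(M^{(-m)})-\lambda|$ for every $i\ge2$. By Weyl's inequality \eqref{ineq:weyl} applied to $M$ versus $M^*$, we have $\lambda\ge\lambda^*-\|M-M^*\|$. For $i\ge2$, compare $M^{(-m)}$ with $M^*$, whose second eigenvalue is $0$ (the remaining eigenvalues are $0$ and $-(w_{\mathrm{in}}-w_{\mathrm{out}})<0$). Weyl then gives
\[
\lambda_i(M^{(-m)})\le \lambda_2(M^*)+\|M^{(-m)}-M^*\| \le \|M-M^*\|+\|B^{(m)}\| .
\]
Combining the two bounds, for every $i\ge2$,
\[
\lambda-\lambda_i(M^{(-m)})\ge \lambda^*-2\|M-M^*\|-\|B^{(m)}\| .
\]
If the right-hand side is nonpositive, the bound is trivial, since $\delta_m(\lambda)\ge0$. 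Otherwise $\lambda-\lambda_i(M^{(-m)})$ is positive, so it equals $|\lambda_i(M^{(-m)})-\lambda|$, and taking the minimum over $i\ge2$ gives \eqref{eq:delta_m_def}.

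The only delicate points are bookkeeping. First, the eigenvalue ordering must be the algebraic one, so that $\lambda_i\le\lambda_2$ for $i\ge2$. Second, the convention that $u^{(-m)}$ is the leading eigenvector is needed for the projector argument; the multiplicity-one assumption covers this. Third, the sign convention is what converts $\sin\theta$ into a bound on the Euclidean distance. I expect no real obstacle beyond these checks.
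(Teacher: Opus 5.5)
Your proposal is correct and follows essentially the paper's route. The paper gets \eqref{eq:u-um_l2} by invoking the residual Davis--Kahan bound of Corollary~\ref{cor:DK_residual_rankone} with $x=u$, $v=u^{(-m)}$, $\hat\lambda=\lambda$; your projector argument proves that corollary inline, with the same sign-convention step. The paper gets \eqref{eq:delta_m_def} from Weyl's inequality via a reverse-triangle chain through $\min_{i\ge2}|\lambda_i(M^*)-\lambda^*|=\lambda^*$, and your one-sided bounds $\lambda\ge\lambda^*-\|M-M^*\|$ and $\lambda_i(M^{(-m)})\le\lambda_2(M^*)+\|M-M^*\|+\|B^{(m)}\|$ give the same result. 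One small point: $\lambda_2(M^*)=0$ requires $k<n$, but your argument only uses $\lambda_2(M^*)\le 0$, which always holds.
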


\begin{proof}
Using Corollary~\ref{cor:DK_residual_rankone} with
\(A=M^{(-m)}\), \(v=u^{(-m)}\), \(x=u\), and \(\hat{\lambda}= \lambda\), yields
\[
\|u-u^{(-m)}\|_2
\le \frac{\sqrt{2}}{\delta_m(\lambda)} \|(M^{(-m)}-\lambda I)u\|_2.
\]
Since \((M-\lambda I)u=0\) and \(M-M^{(-m)}=B^{(m)}\), we obtain
\eqref{eq:u-um_l2}. It remains to compute \(\delta_m(\lambda)\). Using the reverse triangle inequality and Weyl's inequality \eqref{ineq:weyl}, for each \(i\ge2\),
\begin{align}
\min_{i\ge2} \left| \lambda_i(M^{(-m)})-\lambda \right|
&\wge
\min_{i\ge2} |\lambda_i(M^*)-\lambda|-|\lambda_i(M^{(-m)})-\lambda_i(M^*)|\nonumber\\
&\wge
\min_{i\ge2}|\lambda_i(M^*)-\lambda|-\|M^{(-m)}-M^*\|\nonumber\\
&\wge
\min_{i\ge2}|\lambda_i(M^*)-\lambda^*|-|\lambda-\lambda^*|-\|M^{(-m)}-M^*\|.
\label{eq:delta_chain_loo}
\end{align}
Since \(\lambda^*=\lambda_1(M^*)>0\) and
\(\lambda_i(M^*)\in\{0,\,-(w_{\rm in}-w_{\rm out})\}\) for \(i\ge2\),
\begin{equation}\label{eq:delta_min_struct}
\min_{i\ge2}|\lambda_i(M^*)-\lambda^*|
=
\min\left\{|0-\lambda^*|,\;|-(w_{\rm in}-w_{\rm out})-\lambda^*|\right\}
=\lambda^*.
\end{equation}
Moreover, by Weyl's inequality \eqref{ineq:weyl}, applied with \(i = 1\),
\[
|\lambda-\lambda^*|=|\lambda_1(M)-\lambda_1(M^*)|\wle \|M-M^*\|,
\]
and using the triangle inequality
\begin{equation}\label{eq:delta_Mm_triangle}
\|M^{(-m)}-M^*\|
=\|(M-M^*)-(M-M^{(-m)})\|
\wle \|M-M^*\|+\|B^{(m)}\|.
\end{equation}
Substituting in \eqref{eq:delta_chain_loo} gives \eqref{eq:delta_m_def}.
\end{proof}

\begin{lemma}[Leave--one--out perturbation bounds]
\label{lem:fixed_point_simple_rigorous}
Assume that \(\lambda_1(M^{(-m)})\) has multiplicity one, that \(\delta_m(\lambda)>0\), and define
\[
\eta_m\coloneqq \frac{\sqrt2\,\|B^{(m)}\|}{\delta_m(\lambda)}.
\]
If \(\eta_m<1\), then:
\[
\|B^{(m)}u\|_2 \le \dfrac{1}{1-\eta_m}\,\|B^{(m)}u^{(-m)}\|_2.
\]
\end{lemma}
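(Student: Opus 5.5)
Under the hypotheses of Lemma~\ref{lem:fixed_point_simple_rigorous}, the plan is to combine the leave--one--out eigenvector comparison of Lemma~\ref{lem:loo-evec-comparison-DK} with a single triangle inequality, which yields a self-referential (fixed-point) inequality for \(\|B^{(m)}u\|_2\) that can then be solved because \(\eta_m<1\). Since the assumptions coincide with those of Lemma~\ref{lem:loo-evec-comparison-DK}, the bound \eqref{eq:u-um_l2} is available:
\[
\|u-u^{(-m)}\|_2\le \frac{\sqrt2}{\delta_m(\lambda)}\|B^{(m)}u\|_2 .
\]
Here \(u^{(-m)}\) carries the sign convention of Remark~\ref{rem:sign}. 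That convention is exactly what the Davis--Kahan-type residual bound of Corollary~\ref{cor:DK_residual_rankone} requires, since it needs \(\langle u^{(-m)},u\rangle\ge0\). I will therefore use \eqref{eq:u-um_l2} as stated, without revisiting the sign.

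Next, I split \(u=u^{(-m)}+(u-u^{(-m)})\) and use the triangle inequality together with the definition of the operator norm:
\[
\|B^{(m)}u\|_2\le \|B^{(m)}u^{(-m)}\|_2+\|B^{(m)}\|\,\|u-u^{(-m)}\|_2 .
\]
Inserting \eqref{eq:u-um_l2} into the second term gives
\[
\|B^{(m)}u\|_2\le \|B^{(m)}u^{(-m)}\|_2+\frac{\sqrt2\,\|B^{(m)}\|}{\delta_m(\lambda)}\|B^{(m)}u\|_2=\|B^{(m)}u^{(-m)}\|_2+\eta_m\|B^{(m)}u\|_2 .
\]

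Finally, \(\|B^{(m)}u\|_2\) is finite because \(B^{(m)}\) is a finite matrix and \(u\) is a unit vector. So I move \(\eta_m\|B^{(m)}u\|_2\) to the left-hand side to get \((1-\eta_m)\|B^{(m)}u\|_2\le\|B^{(m)}u^{(-m)}\|_2\). Since \(\eta_m<1\), dividing by \(1-\eta_m>0\) gives the claim. There is no real obstacle in this argument. The only point needing care is confirming that the sign normalization of \(u^{(-m)}\) matches the one under which Corollary~\ref{cor:DK_residual_rankone} was applied in Lemma~\ref{lem:loo-evec-comparison-DK}, and Remark~\ref{rem:sign} provides exactly that.
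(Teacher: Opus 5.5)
Your proposal is correct and follows the paper's proof essentially verbatim: invoke Lemma~\ref{lem:loo-evec-comparison-DK}, apply the triangle inequality to \(u=u^{(-m)}+(u-u^{(-m)})\), and absorb the \(\eta_m\|B^{(m)}u\|_2\) term using \(\eta_m<1\). Your remark that the sign convention of Remark~\ref{rem:sign} supplies the condition \(\langle u^{(-m)},u\rangle\ge0\) needed by Corollary~\ref{cor:DK_residual_rankone} is also accurate.
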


\begin{proof}
By Lemma~\ref{lem:loo-evec-comparison-DK},
\begin{equation}\label{eq:fp_step_DK}
\|u-u^{(-m)}\|_2
\wle
\frac{\sqrt2}{\delta_m(\lambda)}\,\|B^{(m)}u\|_2.
\end{equation}
Also, by the triangle inequality and \(\|Bx\|_2\le \|B\|\,\|x\|_2\),
\[
\|B^{(m)}u\|_2
\le
\|B^{(m)}u^{(-m)}\|_2+\|B^{(m)}\|\,\|u-u^{(-m)}\|_2.
\]
Substituting \eqref{eq:fp_step_DK} gives
\[
\|B^{(m)}u\|_2
\le
\|B^{(m)}u^{(-m)}\|_2+\eta_m\|B^{(m)}u\|_2.
\]
Hence
\[
(1-\eta_m)\|B^{(m)}u\|_2 \le \|B^{(m)}u^{(-m)}\|_2,
\]
which together with \(\eta_m<1\) yields the claim.
\end{proof}

Notice that Lemma~\ref{lem:loo-evec-comparison-DK} and Lemma~\ref{lem:fixed_point_simple_rigorous} are deterministic statements: they hold on any realization for which their stated assumptions are satisfied. In the following, we obtain bounds on the terms in the lemma statements by using the probabilistic structure of the model. In particular, we decompose $B^{(m)}$ into a deterministic planted-clique component and a centered fluctuation, and then apply Bernstein's inequalities to control the latter. We begin by introducing a few definitions.

For \(e\in\binom{[n]}{d}\), let \(B_e\in\R^{n\times n}\) be the co-membership matrix of the hyperedge \(e\),
\[
(B_e)_{ij}\coloneqq \1\{i\neq j,\ \{i,j\}\subset e\}.
\]

\begin{lemma}[Single--hyperedge bounds]\label{lem:single_edge_action_bounds_pub}
For any \(e\in\binom{[n]}{d}\) and \(v\in\R^n\),
\begin{equation*}
\|B_ev\|_2\le (d-1)\left(\sum_{j\in e}v_j^2\right)^{1/2},
\end{equation*}
\end{lemma}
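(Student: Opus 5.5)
The plan is to compute $B_e v$ coordinatewise and then sum the squares. By definition, $(B_e)_{ij}=1$ exactly when $i,j\in e$ and $i\neq j$. Hence $(B_ev)_i=0$ for $i\notin e$, while for $i\in e$
\[
(B_ev)_i=\sum_{j\in e,\,j\neq i} v_j .
\]
So $B_ev$ is supported on $e$, and only the $d$ coordinates indexed by $e$ need to be controlled.

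For each $i\in e$, apply the Cauchy--Schwarz inequality \eqref{ineq:cs} to the sum of $d-1$ terms:
\[
(B_ev)_i^2\le (d-1)\sum_{j\in e,\,j\neq i} v_j^2 .
\]
Sum over $i\in e$. Each $j\in e$ appears in exactly $d-1$ of the inner sums, namely once for each $i\in e\setminus\{j\}$. Therefore
\[
\|B_ev\|_2^2\le (d-1)\sum_{i\in e}\sum_{j\in e\setminus\{i\}} v_j^2=(d-1)^2\sum_{j\in e}v_j^2 .
\]
Taking square roots gives the claim.

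As an alternative route, one can write $B_e=\1_e\1_e^\top-\diag(\1_e)$. Restricted to coordinates in $e$, this matrix has eigenvalues $d-1$ (on $\1_e$) and $-1$. Its operator norm is therefore $d-1$, and since $B_ev$ depends only on $(v_j)_{j\in e}$, the bound follows as well.

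There is no real obstacle here. The only point requiring care is the double-counting step: each coordinate $v_j^2$ is counted exactly $d-1$ times, which yields the factor $(d-1)^2$ before taking the square root.
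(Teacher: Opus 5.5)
Your proposal is correct and matches the paper's proof step for step: the coordinate formula showing $B_ev$ is supported on $e$, Cauchy--Schwarz on each sum of $d-1$ terms, and the double count that produces $(d-1)^2\sum_{j\in e}v_j^2$. Your alternative route is also valid, since $B_e$ restricted to $e\times e$ is $J_d-I_d$, whose operator norm is $d-1$.
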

\begin{proof}
If \(i\notin e\), then \((B_ev)_i=0\). If \(i\in e\), then
\(
(B_ev)_i=\sum_{j\in e\setminus\{i\}}v_j.
\label{eq:single_edge_coordinate_formula}
\)
Hence \(\supp(B_ev)\subseteq e\), and therefore
\begin{equation}
\|B_ev\|_2^2
=
\sum_{i\in e}\left(\sum_{j\in e\setminus\{i\}}v_j\right)^2.
\label{eq:single_edge_l2_expand}
\end{equation}
By the Cauchy--Schwarz inequality \eqref{ineq:cs}, for each \(i\in e\),
\begin{equation}
\left(\sum_{j\in e\setminus\{i\}}v_j\right)^2
\le
(d-1)\sum_{j\in e\setminus\{i\}}v_j^2.
\label{eq:single_edge_cs}
\end{equation}
Substituting \eqref{eq:single_edge_cs} into \eqref{eq:single_edge_l2_expand} gives
\begin{equation}
\|B_ev\|_2^2
\le
(d-1)\sum_{i\in e}\sum_{j\in e\setminus\{i\}}v_j^2.
\label{eq:Bev_init_bound}
\end{equation}
For each fixed \(j\in e\), the term \(v_j^2\) appears once for every \(i\in e\setminus\{j\}\), that is, exactly \(d-1\) times. Hence
\begin{equation}
\sum_{i\in e}\sum_{j\in e\setminus\{i\}}v_j^2
=
(d-1)\sum_{j\in e}v_j^2.
\label{eq:single_edge_double_count}
\end{equation}
Substituting \eqref{eq:single_edge_double_count} into \eqref{eq:Bev_init_bound} concludes the proof.
\end{proof}

Recall that \(\bar u^{(-m)}\) is \(\cF^{(-m)}\)-measurable by construction, while \(u^{(-m)} = s_m\bar u^{(-m)}\) is not, \(s_m \in \{\pm 1\}\) being a function of \(u\) by Remark~\ref{rem:sign}. The following Bernstein arguments control \(u^{(-m)}\) through \(\cF^{(-m)}\)-measurability of \(\bar u^{(-m)}\). For sign-invariant quantities we may replace \(u^{(-m)}\) by \(\bar u^{(-m)}\) without change. For quantities that depend on the sign, we prove the same bound for both \(\bar u^{(-m)}\) and \(-\bar u^{(-m)}\), and therefore also for \(u^{(-m)}\).

\begin{observation}[In/out decomposition]
\label{obs:Bm_inout_decomp_pub}
For each \(m\in[n]\), under \(\P_S\), the matrix \(B^{(m)}\) admits the decomposition
\[
B^{(m)}=P^{(m)}+W^{(m)},
\]
where
\[
P^{(m)}\coloneqq \sum_{e\in\cE_{m,\mathrm{in}}}(1-p)B_e,
\qquad
W^{(m)}\coloneqq \sum_{e\in\cE_{m,\mathrm{out}}}(H_e-p)B_e.
\]
\end{observation}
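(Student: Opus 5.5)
The decomposition follows from writing out what \(M^{(-m)}\) is and subtracting. I will use the hyperedge expansion already used in Proposition~\ref{prop:q_decomposition}: since \(A_{ij}=\sum_{e\ni i,j}H_e\) and \(\E_0[A]_{ij}=p\binom{n-2}{d-2}=\sum_{e\ni i,j}p\) for \(i\neq j\), with both diagonals zero, we have the matrix identity
\[
M=\sum_{e\in\binom{[n]}{d}}(H_e-p)B_e .
\]

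Next I will make precise the definition of the leave--one--out matrix. It is obtained by ``removing the contribution of all hyperedges incident to \(m\)'', that is,
\[
M^{(-m)}=\sum_{e\notin\cE_m}(H_e-p)B_e .
\]
This is consistent with the stated facts: \(M^{(-m)}_{m:}=\0\), it has zero diagonal, and it is \(\cF^{(-m)}\)-measurable. Subtracting the two expansions gives
\[
B^{(m)}=M-M^{(-m)}=\sum_{e\in\cE_m}(H_e-p)B_e .
\]

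Finally, I split \(\cE_m=\cE_{m,\mathrm{in}}\sqcup\cE_{m,\mathrm{out}}\) as in \eqref{eq:m-in-def}. Under \(\P_S\), every \(e\in\cE_{m,\mathrm{in}}\) satisfies \(e\subset S\), so \(H_e=1\) almost surely and its term equals \((1-p)B_e\). Summing these terms gives \(P^{(m)}\), and the remaining terms give \(W^{(m)}\). As a side remark, \(W^{(m)}\) is centered with independent Bernoulli summands and \(P^{(m)}\) is deterministic given \(S\); this is what the subsequent Bernstein arguments rely on. If \(m\notin S\), or \(k<d\), then \(\cE_{m,\mathrm{in}}=\emptyset\) and \(P^{(m)}=0\), which is consistent with the formula.

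The only real obstacle is interpretive: one must fix the leave--one--out construction as the subtraction of the centered terms \((H_e-p)B_e\), not just the raw counts \(H_eB_e\). With the latter choice the difference would be \(\sum_{e\in\cE_m}H_eB_e\), which has no \(-p\) centering. I will state this convention explicitly. It is the natural one because it keeps \(M^{(-m)}_{m:}=\0\), which requires removing the centering \(p\binom{n-2}{d-2}\) from row \(m\) as well.
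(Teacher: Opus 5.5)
Your proof is correct and matches the paper's approach: the paper states this observation without proof, as an immediate consequence of $M=\sum_{e}(H_e-p)B_e$ and the definition of $M^{(-m)}$, which is exactly the argument you spell out. Your explicit convention $M^{(-m)}=\sum_{e\notin\cE_m}(H_e-p)B_e$ is the one the paper intends, and you are right that the paper's claim $M^{(-m)}_{m:}=\0$ forces it; removing only the raw counts $H_eB_e$ would leave the centering $-p\binom{n-2}{d-2}$ in row $m$.
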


We begin with the operator-norm control of \(W^{(m)}\). Let \(C_{1,d}\) be the constant in Lemma~\ref{lem:lkc_mod} with \(\alpha = 1\), and define
\begin{equation}
E_{\mathrm{op}}
\coloneqq
\left\{
\max_{m\in[n]}\|W^{(m)}\|
\le
C_{1,d}\,\sqrt{n\binom{n-2}{d-2}\,p}
\right\}.
\label{eq:Eop_def_lkc}
\end{equation}

\begin{claim}
\label{clm:Wm_op_uniform_lkc}
If \(p \ge \alpha\,\dfrac{\log n}{n\binom{n-2}{d-2}}\), then
\[
\P_S(E_{\mathrm{op}})\ge 1-4n^{-10}.
\]
\end{claim}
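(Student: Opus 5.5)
The plan is to obtain, for each fixed \(m\in[n]\), a tail bound on \(\|W^{(m)}\|\) from Lemma~\ref{lem:lkc_mod} (equivalently Corollary~\ref{cor:lkc_mod}), and then take a union bound over \(m\). The key observation is that \(W^{(m)}=\sum_{e\in\cE_{m,\mathrm{out}}}(H_e-p)B_e\) is itself a centered hypergraph adjacency-type matrix. It is built from independent \(\Ber(p)\) indicators \(\{H_e\}_{e\in\cE_{m,\mathrm{out}}}\), each weighted by the same co-membership matrix \(B_e\) that appears in \(M-M^*=\sum_{e\in\Eout}(H_e-p)B_e\). The difference is only that the sum runs over a subfamily of hyperedges.

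I would therefore check that the proof of Lemma~\ref{lem:lkc_mod} applies verbatim to a sum \(\sum_{e\in\mathcal D}(H_e-p)B_e\) over an arbitrary deterministic subfamily \(\mathcal D\subseteq\cE\) with independent \(\Ber(p)\) entries. This is the step I expect to need the most care, since the lemma is stated for the full centered adjacency matrix.

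The inputs such a concentration bound uses are:
\begin{itemize}
\item the maximal row variance \(\max_i\sum_{j}\sum_{e\in\mathcal D:\{i,j\}\subset e}\Var(H_e)\), which is at most \(p(d-1)\binom{n-2}{d-2}\) and only decreases when \(\mathcal D\) shrinks;
\item the uniform bound \(\|(H_e-p)B_e\|\le d-1\);
\item the condition \(p\ge\alpha\log n/(n\binom{n-2}{d-2})\), which is exactly the hypothesis of the claim.
\end{itemize}
All three are monotone in \(\mathcal D\). If the lemma is proved by matrix Bernstein over the independent summands \((H_e-p)B_e\), or by a moment/combinatorial argument controlling the variance proxy, restricting to \(\mathcal D=\cE_{m,\mathrm{out}}\) changes nothing except possibly decreasing constants. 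The conclusion is then that, for each \(m\),
\[
\P_S\left(\|W^{(m)}\|> C_{1,d}\sqrt{n\tbinom{n-2}{d-2}p}\right)\le 4n^{-11}.
\]
If the lemma were only usable as a black box on matrices of the form \(A-\E[A]\), I would fall back on a coupling device. I would view \(W^{(m)}\) as the centered adjacency matrix of the hypergraph \(H'\) with \(H'_e=H_e\) for \(e\in\cE_{m,\mathrm{out}}\) and \(H'_e\) deterministic elsewhere, set to \(p\)-centered zero contributions. This reduces to the same form with a subset of random edges, which is exactly the generalized setting covered by the \(\P_S\) version (Corollary~\ref{cor:lkc_mod} already handles planted \(S\), i.e. deterministic edges \(e\subset S\)).

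Finally, a union bound over the \(n\) choices of \(m\) gives
\[
\P_S(E_{\mathrm{op}}^c)\le n\cdot 4n^{-11}=4n^{-10},
\]
which is the claim.
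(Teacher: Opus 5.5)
Your proposal is correct and is the paper's argument: for fixed $m$, $W^{(m)}=A(\cE_{m,\mathrm{out}})-\E_S[A(\cE_{m,\mathrm{out}})]$ is a centered sum over a deterministic family of independent $\Ber(p)$ indicators. Lemma~\ref{lem:lkc_mod} then gives failure probability at most $4n^{-11}$, and a union bound over $m$ gives $4n^{-10}$.

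The step you flagged as delicate is already covered. Lemma~\ref{lem:lkc_mod} is stated for an arbitrary deterministic index set $\mathcal I$, and its proof handles this by masking $\tX_e=X_e\1\{e\in\mathcal I\}$. So no re-derivation and no coupling fallback is needed. That is just as well, because your side estimate of the maximal row variance is wrong: for $\mathcal D=\cE$ it equals $p(1-p)(n-1)\binom{n-2}{d-2}$, which is not bounded by $p(d-1)\binom{n-2}{d-2}$.
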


\begin{proof}
For fixed \(m\in[n]\), under \(\P_S\) the set \(\cE_{m,\mathrm{out}}\) is deterministic and
\(\{H_e\}_{e\in\cE_{m,\mathrm{out}}}\) is a collection of independent \(\Ber(p)\) random variables. Also,
\[
W^{(m)}
=\sum_{e\in\cE_{m,\mathrm{out}}}(H_e-p)\,B_e
=
A(\cE_{m,\mathrm{out}})-\E_S[A(\cE_{m,\mathrm{out}})].
\]
Thus Lemma~\ref{lem:lkc_mod} yields
\[
\|W^{(m)}\|
\le
C_{1,d}\,\sqrt{n\binom{n-2}{d-2}\,p}
\]
with probability at least \(1-4n^{-11}\). A union bound over \(m\in[n]\) gives the claim.
\end{proof}

\begin{corollary}
\label{cor:Wm_op_littleo}
Assume \eqref{eq:recovery_thm_scaling}. On \(E_{\mathrm{op}}\),
\[
\max_{m\in[n]}\|W^{(m)}\|=o(\lambda^*).
\]
\end{corollary}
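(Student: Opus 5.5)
On the event \(E_{\mathrm{op}}\), the argument is a one-line chain of deterministic comparisons. It mirrors the proof of Lemma~\ref{lem:Delta0_vanish}, with \(\|M-M^*\|\) replaced by \(\max_m\|W^{(m)}\|\).

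By the definition \eqref{eq:Eop_def_lkc} of \(E_{\mathrm{op}}\), on this event
\[
\max_{m\in[n]}\|W^{(m)}\|\le C_{1,d}\sqrt{n\binom{n-2}{d-2}p}.
\]
Next I would use the crude binomial bound \(\binom{n-2}{d-2}\le n^{d-2}\), which gives
\[
\max_{m\in[n]}\|W^{(m)}\|\le C_{1,d}\sqrt{p\,n^{d-1}}.
\]
Dividing by \(\lambda^*\) and invoking the first conclusion of Lemma~\ref{lem:lamstar_scale}, namely \(\sqrt{p\,n^{d-1}}/\lambda^*=o(1)\) under \eqref{eq:recovery_thm_scaling}, I conclude
\[
\frac{\max_{m\in[n]}\|W^{(m)}\|}{\lambda^*}\le C_{1,d}\,o(1)=o(1).
\]
This step uses that \(C_{1,d}\) depends only on \(d\), and that \(\lambda^*\asymp k^{d-1}\) is deterministic given \(|S|=k\).

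The same argument covers the sparse setting with \(p=p_n\) through the \(p_n\)-extension of Lemma~\ref{lem:lamstar_scale}. The only hypothesis that needs care is the density condition of Claim~\ref{clm:Wm_op_uniform_lkc}, which follows from \eqref{eq:p_n_lb}, or trivially from \(p\) being fixed. I do not expect a real obstacle. The one point to note is that the bound is uniform in \(m\), because \(E_{\mathrm{op}}\) already controls the maximum over \(m\) and the right-hand side does not depend on \(m\).
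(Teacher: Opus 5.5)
Your proposal is correct and is essentially the paper's proof. The paper also bounds \(\max_m\|W^{(m)}\|\) by the threshold defining \(E_{\mathrm{op}}\), applies \(\binom{n-2}{d-2}\le n^{d-2}\), and concludes with the first scale relation of Lemma~\ref{lem:lamstar_scale}; it merely inserts and cancels a \(\sqrt{\log n}\) factor along the way, which changes nothing.
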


\begin{proof}
On \(E_{\mathrm{op}}\),
\[
\max_{m\in[n]}\frac{\|W^{(m)}\|}{\lambda^*}
\le
\frac{C_{1,d}\sqrt{n\binom{n-2}{d-2}p}}{\lambda^*}
\le
\frac{C_{1,d}\sqrt{p}}{\sqrt{\log n}}\,
\frac{\sqrt{n^{d-1}\log n}}{\lambda^*}
=o(1),
\]
where we used \(\binom{n-2}{d-2}\le n^{d-2}\) and Lemma~\ref{lem:lamstar_scale}.
\end{proof}

We next control \(W^{(m)}u^{(-m)}\). For each \(m\in[n]\), define the vector Bernstein radius

\begin{definition}[Vector Bernstein radius for \(u^{(-m)}\)]
\label{def:vector_bernstein_radius}
Let
\[
C_{\mathrm{vec}}\coloneqq \frac{26}{3}(d-1),
\qquad
L_n\coloneqq \sqrt{p(1-p)\binom{n-2}{d-2}\log n}+\log n.
\]
Define
\[
\cL_n\coloneqq C_{\mathrm{vec}}\,L_n.
\]
\end{definition}

\begin{claim}[Normalized vector Bernstein radius]
\label{clm:vector_radius_normalized}
Assume
\eqref{eq:recovery_thm_scaling}. Then
\begin{equation}\label{eq:vector_radius_normalized}
\frac{\cL_n}{\lambda^*}
\le
o(1)\|u^*\|_\infty.
\end{equation}
\end{claim}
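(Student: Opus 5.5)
Since \(\|u^*\|_\infty=k^{-1/2}\), the claim is equivalent to \(\sqrt{k}\,\cL_n/\lambda^*=o(1)\). Because \(C_{\mathrm{vec}}=\frac{26}{3}(d-1)\) is a constant depending only on \(d\), it suffices to show
\[
\frac{\sqrt{k}\,L_n}{\lambda^*}
\;\le\;
\frac{\sqrt{p(1-p)\,k\binom{n-2}{d-2}\log n}}{\lambda^*}
+\frac{\sqrt{k}\,\log n}{\lambda^*}
=o(1).
\]
I will therefore split \(L_n\) into its two summands and treat each one separately.

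\emph{First summand.} Bound \(1-p\le 1\) and \(\binom{n-2}{d-2}\le n^{d-2}\). This gives
\[
\frac{\sqrt{k}\sqrt{p(1-p)\binom{n-2}{d-2}\log n}}{\lambda^*}\le \frac{\sqrt{p\,k\,n^{d-2}\log n}}{\lambda^*},
\]
which is \(o(1)\) by the second conclusion of Lemma~\ref{lem:lamstar_scale}.

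\emph{Second summand.} The term \(\sqrt{k}\log n/\lambda^*\) is exactly the third quantity in \eqref{eq:lamstar_scale}, so it is also \(o(1)\) by Lemma~\ref{lem:lamstar_scale}.

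Adding the two bounds and multiplying by \(C_{\mathrm{vec}}\) gives \(\cL_n/\lambda^*=o(1)\,k^{-1/2}=o(1)\|u^*\|_\infty\), which is \eqref{eq:vector_radius_normalized}.

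This is essentially a bookkeeping step with no real obstacle. The only point needing care is that the sparse version (with \(p=p_n\)) relies on the \(p_n\)-extension of Lemma~\ref{lem:lamstar_scale}, whose third conclusion uses \(c_d\ge 4(d-1)/(2d-3)\). In the fixed-\(p\) case that conclusion follows directly from \(k\gg\sqrt n\).
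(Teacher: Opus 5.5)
Your proposal is correct and follows essentially the same route as the paper: rewrite the claim as $\sqrt{k}\,\cL_n/\lambda^*=o(1)$, split $L_n$ into its two summands, bound the first using $1-p\le 1$ and $\binom{n-2}{d-2}\le n^{d-2}$, and conclude from the second and third relations in Lemma~\ref{lem:lamstar_scale}. Your side remark is also accurate: in the sparse regime, the third relation is exactly where the condition $c_d\ge 4(d-1)/(2d-3)$ enters.
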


\begin{proof}
Since \(\|u^*\|_\infty^{-1}=\sqrt{k}\), Definition~\ref{def:vector_bernstein_radius} gives
\[
\frac{1}{\lambda^*\,\|u^*\|_\infty}\cL_n
=
\frac{C_{\mathrm{vec}}\sqrt{k}\,L_n}{\lambda^*}.
\]
By Definition~\ref{def:vector_bernstein_radius}
and \(\binom{n-2}{d-2}\le n^{d-2}\), we obtain
\[
\frac{1}{\lambda^*\,\|u^*\|_\infty}\cL_n
\le
C_{\mathrm{vec}}
\left(
\frac{\sqrt{p\,k\,n^{d-2}\log n}}{\lambda^*}
+
\frac{\sqrt{k} \log n}{\lambda^*}
\right).
\]
Lemma~\ref{lem:lamstar_scale} now gives
\[
\frac{1}{\lambda^*\,\|u^*\|_\infty}\cL_n
=o(1),
\]
which is equivalent to \eqref{eq:vector_radius_normalized}.
\end{proof}

We proceed to show that the following event has high probability:
\begin{equation}
E_{\mathrm{vec}}\coloneqq
\left\{
\max_{m\in[n]}\|W^{(m)}u^{(-m)}\|_2
\le
\cL_n
\right\}.
\label{eq:evec_event}
\end{equation}

\begin{claim}[Uniform bound for \(W^{(m)}u^{(-m)}\)]
\label{clm:Wm_um_uniform_pub}
The event \(E_{\mathrm{vec}}\) satisfies
\[
\P_S(E_{\mathrm{vec}})\ge 1-n^{-10}.
\]
\end{claim}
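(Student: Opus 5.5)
We will prove the bound for each fixed \(m\) with failure probability at most \(n^{-11}\), and then take a union bound over \(m\in[n]\). Since \(\|W^{(m)}u^{(-m)}\|_2=\|W^{(m)}\bar u^{(-m)}\|_2\) is sign-invariant, it suffices to work with \(\bar u^{(-m)}\), which is \(\cF^{(-m)}\)-measurable. On the other hand, \(W^{(m)}=\sum_{e\in\cE_{m,\mathrm{out}}}(H_e-p)B_e\) is \(\cF^{(m)}\)-measurable, and under \(\P_S\) the two \(\sigma\)-algebras are independent. We therefore condition on \(\cF^{(-m)}\), treat \(v\coloneqq\bar u^{(-m)}\) as a fixed unit vector with \(v_m=0\), and write
\[
W^{(m)}v=\sum_{e\in\cE_{m,\mathrm{out}}}X_e,\qquad X_e\coloneqq (H_e-p)B_ev .
\]
This is a sum of independent, centered random vectors in \(\R^n\), to which we apply the vector Bernstein inequality from Appendix~\ref{sec:concentration_inequalities}.

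\textbf{Bernstein parameters.} By Lemma~\ref{lem:single_edge_action_bounds_pub},
\[
\|X_e\|_2\le (d-1)\Bigl(\sum_{j\in e}v_j^2\Bigr)^{1/2}\le d-1 ,
\]
which serves as the uniform bound. For the variance proxy, we again use Lemma~\ref{lem:single_edge_action_bounds_pub}:
\[
\sum_{e}\E\|X_e\|_2^2\le p(1-p)(d-1)^2\sum_{e\ni m}\sum_{j\in e}v_j^2 .
\]
Because \(v_m=0\), each \(j\neq m\) appears in at most \(\binom{n-2}{d-2}\) hyperedges containing \(m\). Hence the variance proxy is at most
\[
\sigma^2\coloneqq (d-1)^2p(1-p)\tbinom{n-2}{d-2}.
\]
The vector Bernstein inequality then gives, with conditional probability at least \(1-n^{-11}\),
\[
\|W^{(m)}v\|_2\lesssim \sigma\sqrt{\log n}+(d-1)\log n .
\]
The right-hand side is of the form \(c(d-1)L_n\). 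The constant \(26/3\) in \(C_{\mathrm{vec}}\) should be what results from the specific form of the vector Bernstein lemma (for instance \(\sigma+\sqrt{2\sigma^2 t}+\tfrac{4}{3}Rt\) with \(t=11\log n\)), after bounding \(\sqrt{11}\) and \(11\cdot\tfrac{2}{3}\) crudely. Integrating out the conditioning leaves the bound unchanged, and a union bound over \(m\) gives failure probability at most \(n\cdot n^{-11}=n^{-10}\).

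\textbf{Main obstacle.} The key step is the variance computation. A naive bound would use \(\|v\|_\infty\) or the total number of hyperedges through \(m\), which gives \(\binom{n-1}{d-1}\) rather than \(\binom{n-2}{d-2}\) and loses a factor of \(n\). The double-counting argument, which uses that each coordinate \(j\) lies in only \(\binom{n-2}{d-2}\) hyperedges through \(m\) together with \(\|v\|_2=1\), is what yields the sharp radius. A secondary point is to check that the constants in the appendix's vector Bernstein statement indeed fit under \(\tfrac{26}{3}(d-1)\) with exponent \(11\); if they do not, we adjust the log multiple accordingly.
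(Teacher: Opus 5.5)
Your argument follows the paper's proof step for step: sign invariance to pass to $\bar u^{(-m)}$, conditioning on $\cF^{(-m)}$, the envelope $d-1$ from Lemma~\ref{lem:single_edge_action_bounds_pub}, the double-counting variance bound $p(1-p)(d-1)^2\binom{n-2}{d-2}$ using $\bar u^{(-m)}_m=0$, and a union bound over $m$. The gap is the last step. The claim fixes the radius $\cL_n=\frac{26}{3}(d-1)L_n$ and the failure probability $n^{-10}$. A bound of the form ``$\lesssim\sigma\sqrt{\log n}+(d-1)\log n$'', followed by ``if the constants do not fit, adjust'', does not prove $\P_S(E_{\mathrm{vec}})\ge 1-n^{-10}$. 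Moreover, the instantiation you sketch does not work, for two reasons.

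\begin{itemize}
\item The only vector Bernstein bound in the appendix is Lemma~\ref{lem:tropp-bernstein} with $d_1=n$, $d_2=1$, and its tail carries the prefactor $n+1$. With $t=11\log n$ the per-$m$ failure probability is $(n+1)n^{-11}$, not $n^{-11}$. After the union bound you get $(n+1)n^{-10}>n^{-10}$.
\item Your hypothetical dimension-free form $\sigma+\sqrt{2\sigma^2t}+\frac43Rt$ fails on the constant. With $t=11\log n$ its linear term is $\frac{44}{3}(d-1)\log n>\frac{26}{3}(d-1)\log n$, which is also inconsistent with the ``$11\cdot\frac23$'' you then bound. Increasing the log multiple only makes this term larger, so your fallback cannot recover the stated constant.
\end{itemize}

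The fix is short, and it is what the paper does. First, the rectangular variance parameter in Lemma~\ref{lem:tropp-bernstein} is the maximum of $\|\sum_e\E_S[Z_eZ_e^\top\mid\cF^{(-m)}]\|$ and $\sum_e\E_S[\|Z_e\|_2^2\mid\cF^{(-m)}]$. The first is at most its trace, which equals the second, so your variance bound $\sigma^2=p(1-p)(d-1)^2\binom{n-2}{d-2}$ controls both. Next, take $\ell=13\log n$ and $t_m=\sqrt{2\sigma^2\ell}+\frac23(d-1)\ell$. By Lemma~\ref{lem:bernstein_parameter_ineq} the exponent is at least $\ell$. The conditional failure probability is then at most $(n+1)n^{-13}\le n^{-11}$ for $n\ge 2$, and the union bound gives $n(n+1)n^{-13}\le n^{-10}$. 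Finally,
\[
t_m=(d-1)\Bigl(\sqrt{26\,p(1-p)\tbinom{n-2}{d-2}\log n}+\tfrac{26}{3}\log n\Bigr)\le \cL_n,
\]
since $\sqrt{26}\le 26/3$; this is exactly where the constant $26/3$ comes from. The downstream use in Claim~\ref{clm:vector_radius_normalized} would tolerate any constant, so the gap is bookkeeping rather than structural. The claim as stated, however, needs this computation.
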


\begin{proof}
Fix \(m\in[n]\). Since \(u^{(-m)}=s_m\bar u^{(-m)}\) for some \(s_m\in\{\pm1\}\), we have
\[
\|W^{(m)}u^{(-m)}\|_2=\|W^{(m)}\bar u^{(-m)}\|_2.
\]
It therefore suffices to bound \(\|W^{(m)}\bar u^{(-m)}\|_2\).

For each \(e\in\cE_{m,\mathrm{out}}\), define
\[
Z_e\coloneqq (H_e-p)B_e\bar u^{(-m)}\in\R^n,
\qquad\text{so that}\qquad
W^{(m)}\bar u^{(-m)}=\sum_{e\in\cE_{m,\mathrm{out}}} Z_e.
\]
Since \(\bar u^{(-m)}\) is \(\cF^{(-m)}\)-measurable and, conditional on \(\cF^{(-m)}\), the variables \(\{H_e:e\in\cE_{m,\mathrm{out}}\}\) are independent \(\Ber(p)\), the vectors
\(\{Z_e\}_{e\in\cE_{m,\mathrm{out}}}\) are conditionally independent. Also,
\[
\E_S[Z_e\mid \cF^{(-m)}]
=
B_e\bar u^{(-m)}\,\E_S[H_e-p\mid \cF^{(-m)}]
=\0.
\]

We apply Lemma~\ref{lem:tropp-bernstein} conditionally on \(\cF^{(-m)}\) with \(d_1=n\) and \(d_2=1\). Set
\[
\tK\coloneqq d-1,
\qquad
\tV_n\coloneqq p(1-p)(d-1)^2\binom{n-2}{d-2},
\qquad
\ell\coloneqq 13\log n.
\]
By Lemma~\ref{lem:single_edge_action_bounds_pub}, we have
\begin{equation}\label{eq:vec_envelope_short_new}
\|Z_e\|_2
=
|H_e-p|\,\|B_e\bar u^{(-m)}\|_2
\le
|H_e-p|(d-1)\left(\sum_{j\in e}(\bar u_j^{(-m)})^2\right)^{1/2}
\le
\tK,
\end{equation}
where the last inequality uses \(|H_e-p|\le 1\) and \(\|\bar u^{(-m)}\|_2=1\).

Let \(\sigma_m^2\) denote the variance parameter in Lemma~\ref{lem:tropp-bernstein} for the sum \(\sum_{e\in\cE_{m,\mathrm{out}}}Z_e\). Since each \(Z_e\) is an \(n\times1\) matrix,
\[
\sigma_m^2
=
\max\left\{
\left\|\sum_{e\in\cE_{m,\mathrm{out}}}\E_S[Z_e^\top Z_e\mid \cF^{(-m)}]\right\|,
\left\|\sum_{e\in\cE_{m,\mathrm{out}}}\E_S[Z_eZ_e^\top\mid \cF^{(-m)}]\right\|
\right\}.
\]
Now \(Z_e^\top Z_e=\|Z_e\|_2^2\) is scalar, and \(Z_eZ_e^\top\succeq0\). Hence
\[
\left\|\sum_{e\in\cE_{m,\mathrm{out}}}\E_S[Z_e^\top Z_e\mid \cF^{(-m)}]\right\|
=
\sum_{e\in\cE_{m,\mathrm{out}}}\E_S[\|Z_e\|_2^2\mid \cF^{(-m)}],
\]
while, using \(\|A\|\le \tr(A)\) for \(A\succeq0\) together with \(\tr(zz^\top)=\|z\|_2^2\),
\[
\left\|\sum_{e\in\cE_{m,\mathrm{out}}}\E_S[Z_eZ_e^\top\mid \cF^{(-m)}]\right\|
\le
\tr\left(\sum_{e\in\cE_{m,\mathrm{out}}}\E_S[Z_eZ_e^\top\mid \cF^{(-m)}]\right)
=
\sum_{e\in\cE_{m,\mathrm{out}}}\E_S[\|Z_e\|_2^2\mid \cF^{(-m)}].
\]
Therefore
\begin{equation}
\sigma_m^2
\le
\sum_{e\in\cE_{m,\mathrm{out}}}\E_S[\|Z_e\|_2^2\mid \cF^{(-m)}].
\label{eq:init_sigma_bound}
\end{equation}
Using \eqref{eq:init_sigma_bound}, \eqref{eq:vec_envelope_short_new}, \(\E_S[(H_e-p)^2\mid \cF^{(-m)}]=p(1-p)\), and
\(\cE_{m,\mathrm{out}}\subseteq \cE_m\), we get
\begin{equation}
\label{eq:sum_um}
\sigma_m^2
\le
p(1-p)(d-1)^2
\sum_{e\in\cE_{m,\mathrm{out}}}\sum_{j\in e}(\bar u_j^{(-m)})^2
\le
p(1-p)(d-1)^2
\sum_{e\in\cE_m}\sum_{j\in e}(\bar u_j^{(-m)})^2.
\end{equation}
Since \(\bar u_m^{(-m)}=0\) and every \(e\in\cE_m\) contains \(m\),
\[
\sum_{e\in\cE_m}\sum_{j\in e}(\bar u_j^{(-m)})^2
=
\sum_{e\in\cE_m}\sum_{j\in e\setminus\{m\}}(\bar u_j^{(-m)})^2
=
\sum_{j\neq m}(\bar u_j^{(-m)})^2\,
\left|\{e\in\cE_m:\ j\in e\}\right|.
\]
For each fixed \(j\neq m\), the number of \(d\)-sets containing both \(m\) and \(j\) is
\(\binom{n-2}{d-2}\). Since \(\|\bar u^{(-m)}\|_2=1\),
\begin{equation}
\sum_{e\in\cE_m}\sum_{j\in e}(\bar u_j^{(-m)})^2
=
\binom{n-2}{d-2}\sum_{j\neq m}(\bar u_j^{(-m)})^2
\le
\binom{n-2}{d-2}.
\label{eq:sum_um_bound}
\end{equation}
Substituting \eqref{eq:sum_um_bound} into \eqref{eq:sum_um} gives
\begin{equation}
\label{eq:vec_varproxy_short_revised}
\sigma_m^2
\le
\tV_n.
\end{equation}

Lemma~\ref{lem:tropp-bernstein} therefore gives, for every \(t\ge 0\),
\[
\P_S\left(\|W^{(m)}\bar u^{(-m)}\|_2\ge t\,\middle|\,\cF^{(-m)}\right)
\le
(n+1)\exp\left(
-\frac{t^2}{2\tV_n+\frac23\tK t}
\right).
\]
Choose
\[
t_m\coloneqq \sqrt{2\tV_n\ell}+\frac{2}{3}\tK\,\ell.
\]
By Lemma~\ref{lem:bernstein_parameter_ineq}, the exponent is at least \(\ell\), and hence
\[
\P_S\left(\|W^{(m)}\bar u^{(-m)}\|_2\ge t_m\,\middle|\,\cF^{(-m)}\right)
\le
(n+1)e^{-\ell}.
\]
Taking expectations and a union bound over \(m\in[n]\) yields
\[
\P_S\left(\exists m\in[n]:\ \|W^{(m)}u^{(-m)}\|_2\ge t_m\right)
\le
n(n+1)e^{-\ell}
\le
n^{-10}.
\]

Finally,
\[
t_m
=
(d-1)\left(
\sqrt{26\,p(1-p)\binom{n-2}{d-2}\log n}
+\frac{26}{3}\log n
\right)
\]
\[
\le
\frac{26}{3}(d-1)\left(
\sqrt{p(1-p)\binom{n-2}{d-2}\log n}
+\log n
\right)
=
\cL_n,
\]
where we used \(\sqrt{26}\le 26/3\). Therefore,
\[
\P_S(E_{\mathrm{vec}})\ge 1-n^{-10},
\]
as claimed.
\end{proof}
\begin{lemma}[Deterministic bound for $P^{(m)}$]\label{lem:Pm_crude_pub}
Assume $d\ge3$ and $k\ge d$. If $m\notin S$, then $P^{(m)}=\0$. If $m\in S$, then
\begin{equation*}
\|P^{(m)}\|
\wle
2d\lambda^*\|u^*\|_\infty.
\end{equation*}
\end{lemma}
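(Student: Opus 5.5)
If $m\notin S$, then no hyperedge containing $m$ lies inside $S$. Hence $\cE_{m,\mathrm{in}}=\emptyset$ and $P^{(m)}=\0$ by Observation~\ref{obs:Bm_inout_decomp_pub}.

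For $m\in S$, I will compute $P^{(m)}$ entrywise and bound its operator norm by the maximum absolute row sum, which is valid because $P^{(m)}$ is symmetric with nonnegative entries. An entry $(i,j)$ with $i\neq j$ is nonzero only if $i,j\in S$ and $\{i,j,m\}\subset e\subset S$ for some $e\in\cE_{m,\mathrm{in}}$. Counting the $d$-subsets of $S$ that contain $\{i,j,m\}$ gives
\[
P^{(m)}_{ij}=(1-p)\times
\begin{cases}
\binom{k-2}{d-2}, & m\in\{i,j\},\\
\binom{k-3}{d-3}, & m\notin\{i,j\},\ i,j\in S.
\end{cases}
\]

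Next I bound the row sums. Row $m$ has $k-1$ nonzero entries, so its sum is $(1-p)(k-1)\binom{k-2}{d-2}=\lambda^*$ by \eqref{eq:lambda_star}. A row $i\in S\setminus\{m\}$ contains the entry $(1-p)\binom{k-2}{d-2}$ at column $m$, plus $k-2$ entries equal to $(1-p)\binom{k-3}{d-3}$. Using $\binom{k-3}{d-3}=\frac{d-2}{k-2}\binom{k-2}{d-2}$, its sum is
\[
(1-p)\binom{k-2}{d-2}\bigl(1+(d-2)\bigr)=(d-1)\frac{\lambda^*}{k-1}.
\]
Rows outside $S$ vanish. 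So $\|P^{(m)}\|\le\max\{\lambda^*,(d-1)\lambda^*/(k-1)\}$.

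This row-sum estimate is too crude, and this is the main obstacle. The target $2d\lambda^*/\sqrt k$ is much smaller than $\lambda^*$, while row $m$ alone already has sum $\lambda^*$. I will therefore split
\[
P^{(m)}=(1-p)\binom{k-2}{d-2}\bigl(e_m\1_{S\setminus\{m\}}^\top+\1_{S\setminus\{m\}}e_m^\top\bigr)+R,
\]
where $R$ is supported on $(S\setminus\{m\})^2$. The first term is a symmetric rank-two matrix built from the orthogonal vectors $e_m$ and $\1_{S\setminus\{m\}}$, so its norm equals $\|e_m\|_2\,\|\1_{S\setminus\{m\}}\|_2=\sqrt{k-1}$ times the coefficient. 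That gives $(1-p)\binom{k-2}{d-2}\sqrt{k-1}=\lambda^*/\sqrt{k-1}$.

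The remainder is $R=(1-p)\binom{k-3}{d-3}(\1\1^\top-I)$ restricted to $S\setminus\{m\}$, so $\|R\|\le(1-p)\binom{k-3}{d-3}(k-2)=(d-2)\lambda^*/(k-1)$.

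Adding the two pieces and using $\|u^*\|_\infty=1/\sqrt k$ gives
\[
\|P^{(m)}\|\le \frac{\lambda^*}{\sqrt{k-1}}+\frac{(d-2)\lambda^*}{k-1}\le \frac{\lambda^*}{\sqrt k}\Bigl(\sqrt{2}+\frac{2(d-2)}{\sqrt{k}}\Bigr).
\]
This uses $k/(k-1)\le2$ and $\sqrt{k}/(k-1)\le 2/\sqrt k$, both valid for $k\ge2$ (in particular for $k\ge d\ge3$). The right-hand side is at most $(\sqrt2+2(d-2))\lambda^*/\sqrt k\le 2d\lambda^*\|u^*\|_\infty$, which proves the lemma.
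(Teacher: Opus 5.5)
Your proof is correct and follows essentially the same route as the paper. Both split $P^{(m)}$ into the rank-two star part, with norm $(1-p)\binom{k-2}{d-2}\sqrt{k-1}$, and the block $(1-p)\binom{k-3}{d-3}(J_{k-1}-I_{k-1})$, with norm $(d-2)(1-p)\binom{k-2}{d-2}$; you differ only in the final arithmetic leading to the constant $2d$, and the row-sum detour at the start can simply be dropped.
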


\begin{proof}
If $m\notin S$, then $\cE_{m,\mathrm{in}}=\emptyset$, hence $P^{(m)}=\0$.

Suppose that $m\in S$, and write $S_0\coloneqq S\setminus\{m\}$. Only planted hyperedges $e\subset S$ contribute to $P^{(m)}$, so $P^{(m)}$ is supported on $S\times S$ and therefore $\|P^{(m)}\|=\|P^{(m)}_{S,S}\|$.

Set
\begin{equation}
a_k\coloneqq (1-p)\binom{k-2}{d-2},
\qquad
b_k\coloneqq (1-p)\binom{k-3}{d-3}.
\label{eq:ak_bk_def}
\end{equation}
Then every entry of $P^{(m)}_{S,S}$ connecting $m$ to $S_0$ is equal to $a_k$, while the $S_0\times S_0$ block has diagonal entries zero and off-diagonal entries $b_k$. Thus, after reordering indices so that $m$ comes first,
\[
P^{(m)}_{S,S}
=
\begin{bmatrix}
0 & a_k \1_{k-1}^\top\\
a_k \1_{k-1} & b_k(J_{k-1}-I_{k-1})
\end{bmatrix} 
=
\underbrace{
\begin{bmatrix}
0 & a_k \1_{k-1}^\top\\
a_k \1_{k-1} & \0
\end{bmatrix}}_{\eqqcolon U^{(m)}}
+
\underbrace{
\begin{bmatrix}
0 & 0\\
0 & b_k(J_{k-1}-I_{k-1})
\end{bmatrix}}_{\eqqcolon V^{(m)}},
\]
where $\1_{k-1}\in\R^{k-1}$ is the all--ones vector and $J_{k-1},I_{k-1}\in\R^{(k-1)\times(k-1)}$ are the all--ones
and identity matrices. Hence $\|P^{(m)}\|\le \|U^{(m)}\|+\|V^{(m)}\|$.

The matrix $U^{(m)}$ has two nonzero singular values, both equal to $a_k\|\1_{k-1}\|_2=a_k\sqrt{k-1}$, so $\|U^{(m)}\|=a_k\sqrt{k-1}$. Also, $J_{k-1}-I_{k-1}$ is symmetric with eigenvalues $k-2$ and $-1$, hence $\|V^{(m)}\|=b_k(k-2)$. Using $(k-2)\binom{k-3}{d-3}=(d-2)\binom{k-2}{d-2}$ together with \eqref{eq:ak_bk_def}, we obtain
\[
\|V^{(m)}\|=b_k(k-2)=(d-2)a_k.
\]
Consequently,
\[
\|P^{(m)}\|
\le
a_k\sqrt{k-1}+(d-2)a_k
=
a_k\left(\sqrt{k-1}+d-2\right)
\le
d\,a_k\sqrt{k}.
\]
Since $\lambda^*=(k-1)a_k$, $\|u^*\|_\infty=1/\sqrt{k}$, and $\sqrt{k}/(k-1)\le 2/\sqrt{k}$ for $k\ge2$, it follows that
\begin{equation*}
\|P^{(m)}\|
\le
d\,\lambda^*\frac{\sqrt{k}}{k-1}
\le
2d\,\lambda^*\|u^*\|_\infty,
\qedhere
\end{equation*}
\end{proof}

\begin{lemma}[Uniform leave--one--out separation]\label{lem:delta_proportional_theorem_regime}
Fix $d\ge 3$ and $0<p<1$. Assume \eqref{eq:recovery_thm_scaling}. Then, on
$E_1\cap E_{\mathrm{op}}$,
\[
\min_{m\in[n]}\delta_m(\lambda)\wge (1+o(1))\lambda^*.
\]
\end{lemma}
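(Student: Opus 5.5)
The plan is to start from the deterministic lower bound \eqref{eq:delta_m_def} of Lemma~\ref{lem:loo-evec-comparison-DK},
\[
\delta_m(\lambda)\ \ge\ \lambda^*-2\|M-M^*\|-\|B^{(m)}\|,
\]
and show that both subtracted terms are $o(\lambda^*)$ uniformly in $m$ on $E_1\cap E_{\mathrm{op}}$. The derivation of \eqref{eq:delta_m_def} uses only Weyl's inequality and the spectral structure of $M^*$ in \eqref{eq:delta_min_struct}. It therefore holds for every realization, regardless of the multiplicity assumption, which is needed only for the eigenvector comparison. Since the right-hand side has no dependence on $m$ apart from $\|B^{(m)}\|$, it suffices to show
\[
\|M-M^*\|=o(\lambda^*)
\qquad\text{and}\qquad
\max_{m\in[n]}\|B^{(m)}\|=o(\lambda^*)
\]
on $E_1\cap E_{\mathrm{op}}$.

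The first term is handled directly by Lemma~\ref{lem:Delta0_vanish}: on $E_1$ we have $\Delta_0=\|M-M^*\|/\lambda^*=o(1)$. For the second term, use Observation~\ref{obs:Bm_inout_decomp_pub} to write $B^{(m)}=P^{(m)}+W^{(m)}$ and bound
\[
\|B^{(m)}\|\le\|P^{(m)}\|+\|W^{(m)}\|.
\]
Corollary~\ref{cor:Wm_op_littleo} gives $\max_m\|W^{(m)}\|=o(\lambda^*)$ on $E_{\mathrm{op}}$. The planted part is deterministic. Lemma~\ref{lem:Pm_crude_pub} gives $P^{(m)}=0$ for $m\notin S$, and for $m\in S$ it gives
\[
\|P^{(m)}\|\le 2d\lambda^*\|u^*\|_\infty=\frac{2d\lambda^*}{\sqrt k}.
\]
This requires $k\ge d$, which holds for large $n$ because $k\to\infty$ under \eqref{eq:recovery_thm_scaling}, as observed in the proof of Lemma~\ref{lem:lamstar_scale}. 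Since $d$ is fixed, $2d/\sqrt k=o(1)$, so $\max_m\|P^{(m)}\|=o(\lambda^*)$.

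Combining the three bounds gives, uniformly in $m$,
\[
\delta_m(\lambda)\ge\lambda^*\bigl(1-2\Delta_0-o(1)\bigr)=(1+o(1))\lambda^*.
\]
Taking the minimum over $m$ yields the claim, since none of the $o(1)$ terms depend on $m$.

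The only delicate point is checking the hypothesis of Claim~\ref{clm:Wm_op_uniform_lkc}, i.e.\ that $E_{\mathrm{op}}$ is the right event. That concerns the probability of the event, not the deterministic bound on it, so it is not needed here. I also check that $\lambda^*>0$, which holds because $p<1$ and $k\ge d$. I therefore expect no real obstacle: the lemma is an assembly of Lemma~\ref{lem:Delta0_vanish}, Corollary~\ref{cor:Wm_op_littleo} and Lemma~\ref{lem:Pm_crude_pub}.
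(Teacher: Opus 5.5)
Your proposal is correct and follows the paper's proof essentially step for step. You start from the deterministic bound \eqref{eq:delta_m_def}, control $\|M-M^*\|/\lambda^*$ on $E_1$ via Lemma~\ref{lem:Delta0_vanish}, and split $B^{(m)}=P^{(m)}+W^{(m)}$, handling the pieces with Lemma~\ref{lem:Pm_crude_pub} and Corollary~\ref{cor:Wm_op_littleo}; your extra remarks (that \eqref{eq:delta_m_def} does not use simplicity of $\lambda_1(M^{(-m)})$, and that $k\ge d$ and $\|u^*\|_\infty=k^{-1/2}\to0$ make the $P^{(m)}$ bound $o(\lambda^*)$) only make explicit what the paper leaves implicit.
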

\begin{proof}
Recall \(\delta_m(\lambda)=\min_{i\ge2}|\lambda_i(M^{(-m)})-\lambda|\), \(\lambda=\lambda_1(M)\), and \(\lambda^*=\lambda_1(M^*)\). The argument proving \eqref{eq:delta_m_def} yields, uniformly in \(m\),
\begin{equation}
\frac{\delta_m(\lambda)}{\lambda^*}
\wge
1-2\frac{\|M-M^*\|}{\lambda^*}-\frac{\|B^{(m)}\|}{\lambda^*}.
\label{eq:delta_bound_init}
\end{equation}
On \(E_1\), Lemma~\ref{lem:Delta0_vanish} yields \(\|M-M^*\|/\lambda^*=o(1)\).
Moreover, by Observation~\ref{obs:Bm_inout_decomp_pub} we have \(B^{(m)}=P^{(m)}+W^{(m)}\), so
\begin{equation}
\label{eq:bm_split}
\max_{m\in[n]}\frac{\|B^{(m)}\|}{\lambda^*}
\wle
\max_{m\in[n]}\frac{\|P^{(m)}\|}{\lambda^*}
+
\max_{m\in[n]}\frac{\|W^{(m)}\|}{\lambda^*}.
\end{equation}
By Lemma~\ref{lem:Pm_crude_pub},
\[
\max_m\frac{\|P^{(m)}\|}{\lambda^*}=o(1),
\]
and by Corollary~\ref{cor:Wm_op_littleo},
\[
\max_{m\in[n]}\frac{\|W^{(m)}\|}{\lambda^*}=o(1).
\]
Substituting these bounds into \eqref{eq:bm_split} and then into \eqref{eq:delta_bound_init} concludes the proof.
\end{proof}

Recall the definition of $\eta_m$ from the statement of Lemma \ref{lem:fixed_point_simple_rigorous}
\[
\eta_m= \frac{\sqrt2\,\|B^{(m)}\|}{\delta_m(\lambda)}.
\]

\begin{lemma}[Uniform bounds for \(B^{(m)}\)]
\label{lem:Bm_absorb_transfer_short}
Fix \(d\ge 3\) and \(0<p<1\). Assume \eqref{eq:recovery_thm_scaling}. On \(E_1\cap E_{\mathrm{op}}\cap E_{\mathrm{vec}}\),
\begin{enumerate}[label=(\alph*)]
\item\label{eq:Bm_op_absorb_short}
\(
\max_{m\in[n]}\|B^{(m)}\|\le o(1)\lambda^*.
\)
\item\label{eq:eta_half_short}
\(
\max_{m\in[n]}\eta_m=o(1).
\)
\item\label{eq:Bm_um_target_short}
\(
\max_{m\in[n]}\|B^{(m)}u^{(-m)}\|_2
\le
(2d+o(1))\lambda^*\,\|u^*\|_\infty.
\)
\item\label{eq:Bm_u_transfer_cor}
\(
\max_{m\in[n]}\|B^{(m)}u\|_2
\le
(2d+o(1))\lambda^*\,\|u^*\|_\infty.
\)
\end{enumerate}
\end{lemma}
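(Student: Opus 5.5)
The four parts are proved in order, each feeding the next, on the event $E_1\cap E_{\mathrm{op}}\cap E_{\mathrm{vec}}$.

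For \ref{eq:Bm_op_absorb_short}, we use the decomposition $B^{(m)}=P^{(m)}+W^{(m)}$ of Observation~\ref{obs:Bm_inout_decomp_pub} and the triangle inequality. Lemma~\ref{lem:Pm_crude_pub} gives $\|P^{(m)}\|\le 2d\lambda^*\|u^*\|_\infty=2d\lambda^*/\sqrt k$, and this is $o(\lambda^*)$ because $k\to\infty$ (established in the proof of Lemma~\ref{lem:lamstar_scale}). Corollary~\ref{cor:Wm_op_littleo} gives $\max_m\|W^{(m)}\|=o(\lambda^*)$ on $E_{\mathrm{op}}$. This is essentially the computation already done in the proof of Lemma~\ref{lem:delta_proportional_theorem_regime}.

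For \ref{eq:eta_half_short}, combine \ref{eq:Bm_op_absorb_short} with Lemma~\ref{lem:delta_proportional_theorem_regime}, which gives $\min_m\delta_m(\lambda)\ge(1+o(1))\lambda^*$ on $E_1\cap E_{\mathrm{op}}$. Then
\[
\max_m\eta_m\le \frac{\sqrt2\,o(1)\lambda^*}{(1+o(1))\lambda^*}=o(1).
\]
In particular $\delta_m(\lambda)>0$ for large $n$. Multiplicity one of $\lambda_1(M^{(-m)})$ also follows: by Weyl's inequality and \eqref{eq:delta_Mm_triangle}, $\lambda_1(M^{(-m)})\ge(1-o(1))\lambda^*$, while $\lambda_i(M^{(-m)})\le o(\lambda^*)$ for $i\ge2$. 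So the hypotheses of Lemmas~\ref{lem:loo-evec-comparison-DK} and~\ref{lem:fixed_point_simple_rigorous} hold uniformly in $m$.

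For \ref{eq:Bm_um_target_short}, write
\[
\|B^{(m)}u^{(-m)}\|_2\le\|P^{(m)}\|\,\|u^{(-m)}\|_2+\|W^{(m)}u^{(-m)}\|_2 .
\]
The first term is at most $2d\lambda^*\|u^*\|_\infty$ by Lemma~\ref{lem:Pm_crude_pub}, since $u^{(-m)}$ is a unit vector; when $m\notin S$ it vanishes. The second term is at most $\cL_n$ on $E_{\mathrm{vec}}$, and Claim~\ref{clm:vector_radius_normalized} gives $\cL_n\le o(1)\lambda^*\|u^*\|_\infty$. Adding the two yields $(2d+o(1))\lambda^*\|u^*\|_\infty$, uniformly in $m$.

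For \ref{eq:Bm_u_transfer_cor}, apply Lemma~\ref{lem:fixed_point_simple_rigorous}, valid by \ref{eq:eta_half_short} since $\eta_m<1$ for large $n$:
\[
\|B^{(m)}u\|_2\le(1-\eta_m)^{-1}\|B^{(m)}u^{(-m)}\|_2=(1+o(1))(2d+o(1))\lambda^*\|u^*\|_\infty .
\]

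The only delicate point is the bookkeeping of the $o(1)$ terms. They must be deterministic sequences that are uniform in $m$ and depend only on the scaling in Lemma~\ref{lem:lamstar_scale}, on $k\to\infty$, and on the event definitions. They must not depend on the realization beyond membership in the events. I expect no genuine obstacle, since every estimate needed was already established earlier in the paper.
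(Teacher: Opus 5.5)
Your proposal is correct and follows the paper's proof essentially step for step. For (a) you use the split $B^{(m)}=P^{(m)}+W^{(m)}$ together with Lemma~\ref{lem:Pm_crude_pub} and Corollary~\ref{cor:Wm_op_littleo}; for (b) you use Lemma~\ref{lem:delta_proportional_theorem_regime}, plus the Weyl argument for simplicity of $\lambda_1(M^{(-m)})$; for (c) you use the event $E_{\mathrm{vec}}$ with Claim~\ref{clm:vector_radius_normalized}; and for (d) you use Lemma~\ref{lem:fixed_point_simple_rigorous}, exactly as the paper does.
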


\begin{proof}
Work on \(E_1\cap E_{\mathrm{op}}\cap E_{\mathrm{vec}}\). By Lemma~\ref{lem:delta_proportional_theorem_regime},
\[
\min_{m\in[n]}\delta_m(\lambda)\ge (1+o(1))\lambda^*.
\]
Moreover, since \(B^{(m)}=P^{(m)}+W^{(m)}\),
\[
\max_{m\in[n]}\frac{\|B^{(m)}\|}{\lambda^*}
\le
\max_{m\in[n]}\frac{\|P^{(m)}\|}{\lambda^*}
+
\max_{m\in[n]}\frac{\|W^{(m)}\|}{\lambda^*}
=
o(1),
\]
by Lemma~\ref{lem:Pm_crude_pub} and Corollary~\ref{cor:Wm_op_littleo}.
This proves \ref{eq:Bm_op_absorb_short}, and \ref{eq:eta_half_short} follows immediately from the definition of \(\eta_m\).

Also,
\[
\frac{\|M^{(-m)}-M^*\|}{\lambda^*}
\le
\frac{\|M-M^*\|}{\lambda^*}+\frac{\|B^{(m)}\|}{\lambda^*}
=
o(1),
\]
uniformly in \(m\). Since \(\lambda_1(M^*)=\lambda^*\) and \(\lambda_2(M^*)=0\), Weyl's inequality yields
\[
\lambda_1(M^{(-m)})\ge (1-o(1))\lambda^*,
\qquad
\lambda_2(M^{(-m)})\le o(1)\lambda^*,
\]
so \(\lambda_1(M^{(-m)})\) is simple for all sufficiently large \(n\). Thus Lemma~\ref{lem:fixed_point_simple_rigorous} applies.

Next, by Definition~\ref{eq:evec_event},
\[
\max_{m\in[n]}\|W^{(m)}u^{(-m)}\|_2
\le
\cL_n.
\]
Hence, using \(B^{(m)}=P^{(m)}+W^{(m)}\),
\[
\max_{m\in[n]}\|B^{(m)}u^{(-m)}\|_2
\le
\max_{m\in[n]}\|P^{(m)}\|+\cL_n.
\]
By Lemma~\ref{lem:Pm_crude_pub},
\[
\max_{m\in[n]}\|P^{(m)}\|
\le
2d\,\lambda^*\,\|u^*\|_\infty,
\]
and by Claim~\ref{clm:vector_radius_normalized},
\[
\cL_n
\le
o(1)\lambda^*\,\|u^*\|_\infty.
\]
This proves \ref{eq:Bm_um_target_short}.

Finally, Lemma~\ref{lem:fixed_point_simple_rigorous} together with \ref{eq:eta_half_short} yields
\[
\max_{m\in[n]}\|B^{(m)}u\|_2
\le
\frac{1}{1-\max_{m\in[n]}\eta_m}\,
\max_{m\in[n]}\|B^{(m)}u^{(-m)}\|_2
=
(1+o(1))\max_{m\in[n]}\|B^{(m)}u^{(-m)}\|_2,
\]
and \ref{eq:Bm_u_transfer_cor} follows from \ref{eq:Bm_um_target_short}.
\end{proof}

\subsubsection{Leave--one--out eigenvector comparison}
\label{sec:loo_ev_bound}

\begin{lemma}[Uniform leave--one--out sign alignment]
\label{lem:loo_sign_alignment_uniform}
Assume \eqref{eq:recovery_thm_scaling}. Then, for all sufficiently large $n$,
on $E_1\cap E_{\mathrm{op}}\cap E_{\mathrm{vec}}$, simultaneously for all $m\in[n]$,
\[
\langle u^{(-m)},u^*\rangle \ge 0.
\]
\end{lemma}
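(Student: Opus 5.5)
The plan is to compare both vectors to \(u\). The sign conventions in Remark~\ref{rem:sign} already give \(\langle u,u^*\rangle\ge 0\) and \(\langle u^{(-m)},u\rangle\ge 0\), so it is enough to show two things on \(E_1\cap E_{\mathrm{op}}\cap E_{\mathrm{vec}}\):
\[
\|u-u^*\|_2=o(1)
\qquad\text{and}\qquad
\max_{m\in[n]}\|u-u^{(-m)}\|_2=o(1).
\]
Given both, the Cauchy--Schwarz inequality and \(\|u^*\|_2=1\) yield
\[
\langle u^{(-m)},u^*\rangle\;\ge\;\langle u,u^*\rangle-\|u-u^{(-m)}\|_2
\;=\;1-\tfrac12\|u-u^*\|_2^2-\|u-u^{(-m)}\|_2\;=\;1-o(1).
\]
This is strictly positive for all large \(n\), uniformly in \(m\). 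The identity \(\langle u,u^*\rangle=1-\tfrac12\|u-u^*\|_2^2\) holds because both vectors are unit vectors.

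\textbf{Step 1: \(u\) is close to \(u^*\).} I will apply Corollary~\ref{cor:DK_residual_rankone} with \(A=M^*\), \(v=u^*\), \(x=u\) and \(\hat\lambda=\lambda\). This is the same device as in Lemma~\ref{lem:loo-evec-comparison-DK}, with \(M^*\) in place of \(M^{(-m)}\).
\begin{itemize}
\item The top eigenvalue \(\lambda^*\) of \(M^*\) is simple, and the other eigenvalues lie in \(\{0,-(w_{\rm in}-w_{\rm out})\}\). By \eqref{eq:delta_min_struct} and Weyl's inequality, the gap is \(\min_{i\ge2}|\lambda_i(M^*)-\lambda|\ge\lambda^*-|\lambda-\lambda^*|\ge(1-\Delta_0)\lambda^*\).
\item The residual satisfies \((M^*-\lambda I)u=(M^*-M)u\), so its norm is at most \(\Delta_0\lambda^*\).
\end{itemize}
Together these give \(\|u-u^*\|_2\le\sqrt2\,\Delta_0/(1-\Delta_0)\). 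Lemma~\ref{lem:Delta0_vanish} gives \(\Delta_0=o(1)\) on \(E_1\), so this bound is \(o(1)\). One point must be checked against the exact form of the corollary in the appendix. If it bounds only \(\min_{s=\pm1}\|u-su^*\|_2\), then \(\langle u,u^*\rangle\ge0\) shows the minimum is attained at \(s=+1\), since \(\|u-su^*\|_2^2=2-2s\langle u,u^*\rangle\).

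\textbf{Step 2: \(u\) is close to every \(u^{(-m)}\).} The proof of Lemma~\ref{lem:Bm_absorb_transfer_short} shows that \(\lambda_1(M^{(-m)})\) is simple for large \(n\), so Lemma~\ref{lem:loo-evec-comparison-DK} applies for every \(m\) and gives
\[
\|u-u^{(-m)}\|_2\le\frac{\sqrt2\,\|B^{(m)}u\|_2}{\delta_m(\lambda)}.
\]
I bound the two pieces as follows.
\begin{itemize}
\item Lemma~\ref{lem:delta_proportional_theorem_regime} gives \(\delta_m(\lambda)\ge(1+o(1))\lambda^*\).
\item Lemma~\ref{lem:Bm_absorb_transfer_short}\ref{eq:Bm_u_transfer_cor} gives \(\|B^{(m)}u\|_2\le(2d+o(1))\lambda^*\|u^*\|_\infty\).
\end{itemize}
Hence \(\max_m\|u-u^{(-m)}\|_2\le(2\sqrt2\,d+o(1))/\sqrt k=o(1)\), since \(k\to\infty\) (shown in Lemma~\ref{lem:lamstar_scale}). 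This bound is insensitive to the choice of sign of \(u^{(-m)}\) in the corollary: the convention \(\langle u^{(-m)},u\rangle\ge0\) means \(u^{(-m)}\) is already the closer of \(\pm\bar u^{(-m)}\) to \(u\).

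\textbf{Main obstacle.} The only delicate point is the sign bookkeeping. Corollary~\ref{cor:DK_residual_rankone} is a \(\sin\theta\)-type bound, so both Step 1 and Step 2 need the observation that the prescribed signs coincide with the optimal alignments. Everything else reduces to estimates already established. In particular, the step \(\|B^{(m)}u\|_2\to\|B^{(m)}u^{(-m)}\|_2\) in Lemma~\ref{lem:Bm_absorb_transfer_short} relies only on \(\|B^{(m)}u^{(-m)}\|_2=\|B^{(m)}\bar u^{(-m)}\|_2\), which does not depend on signs, so there is no circularity.
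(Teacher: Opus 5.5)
Your proof is correct, but it reaches the conclusion by a different route than the paper.

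Your Step 1 is the same comparison of $u$ with $u^*$ that the paper makes; you use the residual form, while the paper uses Lemma~\ref{lem:DK_sign_aligned}. The difference is in how $u^{(-m)}$ is tied to $u$.

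The paper never bounds $\|u-u^{(-m)}\|_2$ directly at this stage. It applies Lemma~\ref{lem:DK_sign_aligned} to $(M^{(-m)},M^*)$ and gets $\|s_mu^{(-m)}-u^*\|_2\le 2^{3/2}\Delta_m$ for some unknown sign $s_m$. It then deduces $\|u-s_mu^{(-m)}\|_2<\sqrt2$, hence $\langle u,s_mu^{(-m)}\rangle>0$. Finally, the convention $\langle u^{(-m)},u\rangle\ge0$ excludes $s_m=-1$. That argument needs only $\Delta_0+\max_m\|B^{(m)}\|/\lambda^*=o(1)$. In effect this uses only $E_1\cap E_{\mathrm{op}}$ and the deterministic bound on $P^{(m)}$.

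You instead invoke Lemma~\ref{lem:loo-evec-comparison-DK} together with Lemma~\ref{lem:Bm_absorb_transfer_short}\ref{eq:Bm_u_transfer_cor}. This chain already builds in the convention $\langle u^{(-m)},u\rangle\ge0$, so no sign-contradiction step is needed. You correctly check that the chain never uses $\langle u^{(-m)},u^*\rangle\ge0$: its only random input, $E_{\mathrm{vec}}$, is sign-invariant. That check is what rules out circularity.

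Each route has a cost and a benefit. Yours needs heavier machinery: $E_{\mathrm{vec}}$, the fixed-point absorption of Lemma~\ref{lem:fixed_point_simple_rigorous}, and $k\to\infty$ so that the $O(k^{-1/2})$ bound becomes small. In return, you obtain $\max_m\|u-u^{(-m)}\|_2\le(2\sqrt2\,d+o(1))\|u^*\|_\infty$ on the spot. That is Lemma~\ref{lem:loo-evec-comparison-simplified}\ref{eq:full-vs-loo-L2-simplified}, and your argument shows it could be proved before the sign alignment rather than after. The paper's route is more elementary and self-contained, relying only on operator-norm perturbation.
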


\begin{proof}
Work on $E_1\cap E_{\mathrm{op}}\cap E_{\mathrm{vec}}$ and write
\[
\Delta_0 = \frac{\|M-M^*\|}{\lambda^*},
\qquad
\Delta_m = \frac{\|M^{(-m)}-M^*\|}{\lambda^*}.
\]

Recall \(B^{(m)} = M-M^{(-m)}\).
Then, for every \(m\in[n]\),
\[
M^{(-m)}-M^*
\weq
(M-B^{(m)})-M^*
\weq
(M-M^*)-B^{(m)},
\]
and hence
\(
\|M^{(-m)}-M^*\|
\le
\|M-M^*\|+\|B^{(m)}\|,
\)
Then, for every $m\in[n]$,
\[
\Delta_m \le \Delta_0+\frac{\|B^{(m)}\|}{\lambda^*}.
\]
On $E_1$, Lemma~\ref{lem:Delta0_vanish} yields $\Delta_0=o(1)$, and
Lemma~\ref{lem:Bm_absorb_transfer_short}\ref{eq:Bm_op_absorb_short} yields
\[
\max_{m\in[n]}\frac{\|B^{(m)}\|}{\lambda^*}=o(1).
\]
Therefore,
\[
\max_{m\in[n]}(\Delta_0+\Delta_m)
\le
2\Delta_0+\max_{m\in[n]}\frac{\|B^{(m)}\|}{\lambda^*}
=o(1).
\]
In particular, for all sufficiently large $n$,
\[
\max_{m\in[n]} 2^{3/2}\,(\Delta_0+\Delta_m) < \sqrt2.
\]

Fix \(m\in[n]\). Since \(\langle u,u^*\rangle\ge 0\) by Remark~\ref{rem:sign},
Lemma~\ref{lem:DK_sign_aligned} gives
\[
\|u-u^*\|_2 \le 2^{3/2}\Delta_0.
\]
Applying Lemma~\ref{lem:DK_sign_aligned} to \((M^{(-m)},M^*)\), there exists \(s_m\in\{\pm1\}\) such that
\[
\langle s_m u^{(-m)},u^*\rangle\ge 0
\qquad\text{and}\qquad
\|s_m u^{(-m)}-u^*\|_2 \le 2^{3/2}\Delta_m.
\]
Hence
\begin{equation}
\|u-s_m u^{(-m)}\|_2
\le
\|u-u^*\|_2+\|s_m u^{(-m)}-u^*\|_2
\le
2^{3/2}(\Delta_0+\Delta_m)
<\sqrt2.
\label{eq:sign_alignment_cause}
\end{equation}

For unit vectors \(x,y\), since
\[
\|x-y\|_2^2=2-2\langle x,y\rangle,
\]
the inequality \(\|x-y\|_2<\sqrt2\) implies \(\langle x,y\rangle>0\). Applying this with
\(x=u\) and \(y=s_m u^{(-m)}\), \eqref{eq:sign_alignment_cause} gives
\[
\langle u,s_m u^{(-m)}\rangle>0.
\]
If \(s_m=-1\), then \(\langle u,u^{(-m)}\rangle<0\), contradicting
\(\langle u^{(-m)},u\rangle\ge 0\) from Remark~\ref{rem:sign}. Hence \(s_m=+1\), and therefore
\[
\langle u^{(-m)},u^*\rangle
=
\langle s_m u^{(-m)},u^*\rangle
\ge 0.
\]
Since the preceding bounds are uniform in \(m\), the claim follows for all \(m\in[n]\).
\end{proof}

\begin{lemma}[Leave--one--out eigenvector comparison]
\label{lem:loo-evec-comparison-simplified}
On the event \(E_1\cap E_{\mathrm{op}}\cap E_{\mathrm{vec}}\), and with \(k\) chosen according to \eqref{eq:recovery_thm_scaling},
\begin{enumerate}[label=(\alph*)]
\item \(\max_{m\in[n]}\|u^{(-m)}-u^*\|_2 \le o(1)\), \label{eq:loo-L2-simplified}
\item \(\max_{m\in[n]}\|u-u^{(-m)}\|_2
\le
(2\sqrt2d+o(1))\|u^*\|_\infty,\)
\label{eq:full-vs-loo-L2-simplified}
\item \(\max_{m\in[n]}\|u^{(-m)}-u^*\|_\infty \le 2.\)
\label{eq:loo-Linf-simplified}
\end{enumerate}
\end{lemma}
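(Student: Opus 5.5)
All three parts will be derived on $E_1\cap E_{\mathrm{op}}\cap E_{\mathrm{vec}}$. The main inputs are the Davis--Kahan bound of Lemma~\ref{lem:DK_sign_aligned}, the sign alignment of Lemma~\ref{lem:loo_sign_alignment_uniform}, and the perturbation bounds of Lemma~\ref{lem:Bm_absorb_transfer_short}.

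\emph{Part \ref{eq:loo-L2-simplified}.} As in the proof of Lemma~\ref{lem:loo_sign_alignment_uniform}, set $\Delta_m=\|M^{(-m)}-M^*\|/\lambda^*$. Then
\[
\Delta_m\le \Delta_0+\|B^{(m)}\|/\lambda^*,
\]
and this is $o(1)$ uniformly in $m$ by Lemma~\ref{lem:Delta0_vanish} and Lemma~\ref{lem:Bm_absorb_transfer_short}\ref{eq:Bm_op_absorb_short}. Lemma~\ref{lem:loo_sign_alignment_uniform} gives $\langle u^{(-m)},u^*\rangle\ge0$ for every $m$ with no sign flip, i.e.\ $s_m=+1$ in the notation of that proof. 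So Lemma~\ref{lem:DK_sign_aligned}, applied to the pair $(M^{(-m)},M^*)$, yields directly
\[
\|u^{(-m)}-u^*\|_2\le 2^{3/2}\Delta_m=o(1),
\]
uniformly in $m$.

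\emph{Part \ref{eq:full-vs-loo-L2-simplified}.} Combine Lemma~\ref{lem:loo-evec-comparison-DK} with Lemma~\ref{lem:Bm_absorb_transfer_short}\ref{eq:Bm_u_transfer_cor} and Lemma~\ref{lem:delta_proportional_theorem_regime}. Before invoking Lemma~\ref{lem:loo-evec-comparison-DK}, its hypotheses must hold. Simplicity of $\lambda_1(M^{(-m)})$ was shown in the proof of Lemma~\ref{lem:Bm_absorb_transfer_short}. Positivity of $\delta_m(\lambda)$ follows from $\delta_m(\lambda)\ge(1+o(1))\lambda^*>0$. The lemma gives
\[
\|u-u^{(-m)}\|_2\le \frac{\sqrt2}{\delta_m(\lambda)}\|B^{(m)}u\|_2
\le \frac{\sqrt2\,(2d+o(1))\lambda^*\|u^*\|_\infty}{(1+o(1))\lambda^*}
=(2\sqrt2 d+o(1))\|u^*\|_\infty .
\]
One subtlety needs checking here. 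Lemma~\ref{lem:loo-evec-comparison-DK} is stated for the signed vector $u^{(-m)}$, whose sign is fixed by $\langle u^{(-m)},u\rangle\ge0$. The underlying residual bound (Corollary~\ref{cor:DK_residual_rankone}) controls the distance to the nearer of $\pm\bar u^{(-m)}$. Since $\langle u^{(-m)},u\rangle\ge 0$, the vector $u^{(-m)}$ is the closer of the two, so the sign convention is consistent with the bound. All bounds are uniform in $m$, so taking the maximum is harmless.

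\emph{Part \ref{eq:loo-Linf-simplified}.} This is trivial. Both vectors are unit, so
\[
\|u^{(-m)}-u^*\|_\infty\le\|u^{(-m)}-u^*\|_2\le\|u^{(-m)}\|_2+\|u^*\|_2=2.
\]
(Part \ref{eq:loo-L2-simplified} even gives $o(1)$.)

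The main obstacle is the bookkeeping in part \ref{eq:full-vs-loo-L2-simplified}: ensuring the $o(1)$ terms are uniform in $m$, and that the sign conventions of Remark~\ref{rem:sign} match the hypotheses of the residual Davis--Kahan corollary. Everything else is a direct assembly of the earlier lemmas.
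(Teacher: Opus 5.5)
Your proposal is correct and follows the paper's proof essentially step for step. Part (a) applies Lemma~\ref{lem:DK_sign_aligned} to $(M^{(-m)},M^*)$ after the sign alignment of Lemma~\ref{lem:loo_sign_alignment_uniform}, with $\|M^{(-m)}-M^*\|/\lambda^*\le\Delta_0+\|B^{(m)}\|/\lambda^*=o(1)$. Part (b) combines Lemma~\ref{lem:loo-evec-comparison-DK} with Lemma~\ref{lem:Bm_absorb_transfer_short}\ref{eq:Bm_u_transfer_cor} and Lemma~\ref{lem:delta_proportional_theorem_regime}, and part (c) is the trivial unit-norm bound. Your extra check that the convention $\langle u^{(-m)},u\rangle\ge 0$ matches the sign hypothesis of Corollary~\ref{cor:DK_residual_rankone} is correct and is left implicit in the paper.
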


\begin{proof}
Fix \(m\in[n]\) and work on \(E_1\cap E_{\mathrm{op}}\cap E_{\mathrm{vec}}\).

By Lemma~\ref{lem:loo_sign_alignment_uniform}, for all sufficiently large \(n\) we have
\(\langle u^{(-m)},u^*\rangle\ge 0\), so Lemma~\ref{lem:DK_sign_aligned} yields
\[
\|u^{(-m)}-u^*\|_2 \le 2^{3/2}\frac{\|M^{(-m)}-M^*\|}{\lambda^*}.
\]
Using \(M^{(-m)}=M-B^{(m)}\) and the triangle inequality,
\[
\frac{\|M^{(-m)}-M^*\|}{\lambda^*}
\le
\frac{\|M-M^*\|}{\lambda^*}+\frac{\|B^{(m)}\|}{\lambda^*}
=
\Delta_0+\frac{\|B^{(m)}\|}{\lambda^*}.
\]
On \(E_1\), Lemma~\ref{lem:Delta0_vanish} gives \(\Delta_0=o(1)\), and on
\(E_1\cap E_{\mathrm{op}}\cap E_{\mathrm{vec}}\), Lemma~\ref{lem:Bm_absorb_transfer_short}\ref{eq:Bm_op_absorb_short}
gives \(\max_{m\in[n]}\|B^{(m)}\|/\lambda^*=o(1)\). Hence \ref{eq:loo-L2-simplified} holds.

By the proof of Lemma~\ref{lem:Bm_absorb_transfer_short}, \(\lambda_1(M^{(-m)})\) is simple on \(E_1\cap E_{\mathrm{op}}\cap E_{\mathrm{vec}}\). Also, Lemma~\ref{lem:delta_proportional_theorem_regime} gives \(\delta_m(\lambda)\ge (1+o(1))\lambda^*>0\). Therefore Lemma~\ref{lem:loo-evec-comparison-DK} applies and gives
\[
\|u-u^{(-m)}\|_2 \le \frac{\sqrt2}{\delta_m(\lambda)}\|B^{(m)}u\|_2.
\]
Using Lemma~\ref{lem:Bm_absorb_transfer_short}\ref{eq:Bm_u_transfer_cor}, we obtain
\[
\|u-u^{(-m)}\|_2
\le
\frac{\sqrt2}{(1+o(1))\lambda^*}(2d+o(1))\lambda^*\|u^*\|_\infty
=
(2\sqrt2d+o(1))\|u^*\|_\infty,
\]
proving \ref{eq:full-vs-loo-L2-simplified}.

Finally,
\[
\|u^{(-m)}-u^*\|_\infty
\le \|u^{(-m)}\|_\infty+\|u^*\|_\infty
\le \|u^{(-m)}\|_2+\|u^*\|_2
=2,
\]
which proves \ref{eq:loo-Linf-simplified}.
\end{proof}

\subsubsection{Row-wise concentration}
\label{sec:row-wise}

In this section, we provide a bound for the row concentration term 
\[
\max_{m\in[n]} \left|M_{m:}(u^{(-m)}-u^*)\right|
\]
in \eqref{eq:key_quantities}. We begin by introducing a few definitions and notations.
\begin{definition}
\label{def:scalar_bernstein_notation}
Denote
\[
V_n \coloneqq \sqrt{p(1-p)(d-1)\binom{n-2}{d-2}\log n},
\qquad
K_n\coloneqq (d-1)\log n.
\]
For \(v\in\R^n\), define the row Bernstein radius
\[
\mathcal R_n(v)\coloneqq V_n\|v\|_2+K_n\|v\|_\infty.
\]
\end{definition}

\begin{definition}[Event \(E_2\)]\label{def:event_E2}
Let \(\{v^{(-i)}\}_{i=1}^n\subset\R^n\). Define \(E_2(\{v^{(-i)}\}_{i=1}^n)\) as the event that
\begin{equation}\label{eq:E2_def}
\left|(M-M^*)_{i:}v^{(-i)}\right|
\;\le\;
8\,\mathcal R_n(v^{(-i)})
\qquad\text{for all }i\in[n].
\end{equation}
\end{definition}

\begin{remark}
If \(v^{(-i)} = u^*\) for all \(i\), we will simply denote the event by \(E_2(u^*)\).
\end{remark}

\begin{claim}[Row-Bernstein for \(u^*\)]\label{clm:row_bernstein_u_star}
\begin{equation}\label{eq:E2_prob_u_star}
\P_S\left(E_2(u^*)\right)\ge 1-n^{-10}.
\end{equation}
\end{claim}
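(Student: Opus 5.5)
We fix $i\in[n]$, show that $|(M-M^*)_{i:}u^*|\le 8\mathcal R_n(u^*)$ fails with probability at most $n^{-11}$, and then take a union bound over the $n$ rows. Because $u^*$ is deterministic, no leave--one--out conditioning is needed here; only the independence of the hyperedge indicators under $\P_S$ is used.

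\textbf{Step 1: write the row as a sum over hyperedges.} For $j\neq i$,
\[
(M-M^*)_{ij}=\sum_{e\ni i,j}(H_e-\E_S H_e)=\sum_{e\in\cE_{i,\mathrm{out}}\,:\,j\in e}(H_e-p),
\]
since the planted hyperedges are deterministic and cancel. Exchanging the order of summation gives
\[
(M-M^*)_{i:}u^*=\sum_{e\in\cE_{i,\mathrm{out}}}(H_e-p)\,a_e,
\qquad
a_e\coloneqq\sum_{j\in e\setminus\{i\}}u^*_j .
\]
The summands $Y_e=(H_e-p)a_e$ are independent and centered under $\P_S$.

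\textbf{Step 2: check the Bernstein parameters.} Each coefficient satisfies $|a_e|\le (d-1)\|u^*\|_\infty$, so $|Y_e|\le (d-1)\|u^*\|_\infty$. By Cauchy--Schwarz, $a_e^2\le (d-1)\sum_{j\in e\setminus\{i\}}(u^*_j)^2$. We then use the same double count as in \eqref{eq:sum_um_bound}: each $j\neq i$ lies in exactly $\binom{n-2}{d-2}$ hyperedges of $\cE_i$. This gives
\[
\sum_e \Var_S(Y_e)\le p(1-p)(d-1)\binom{n-2}{d-2}\|u^*\|_2^2 .
\]

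\textbf{Step 3: apply Bernstein.} We apply the scalar Bernstein inequality, Lemma~\ref{lem:scalar-bernstein}, together with Lemma~\ref{lem:bernstein_parameter_ineq}, with $\ell=12\log n$. The resulting deviation level is
\[
\sqrt{2\cdot 12}\,V_n\|u^*\|_2+\tfrac{2}{3}\cdot 12\,K_n\|u^*\|_\infty ,
\]
which is at most $8\mathcal R_n(u^*)$ because $\sqrt{24}\le 8$ and $8\le 8$. The failure probability for a single row is at most $2e^{-12\log n}=2n^{-12}$.

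\textbf{Step 4: union bound.} Summing over the $n$ rows gives $2n^{-11}\le n^{-10}$ for $n\ge2$, which proves \eqref{eq:E2_prob_u_star}.

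The only delicate point is matching the constant $8$ in Definition~\ref{def:event_E2} while keeping the tail small enough to survive the union bound; the choice $\ell=12\log n$ does both. The rest is routine.
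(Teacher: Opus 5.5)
Your proof is correct and matches the paper's argument. The paper applies Lemma~\ref{lem:row-bernstein-general} with $c=11$ and $C=8$, and that lemma's proof is exactly your Steps 1--3: the hyperedge decomposition of the row, the envelope $(d-1)\|u^*\|_\infty$, the variance proxy from the $\binom{n-2}{d-2}$ double count, and scalar Bernstein with $\ell=12\log n$. It then finishes with the same union bound $2n^{-11}\le n^{-10}$.
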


\begin{proof}
For each \(i\in[n]\), define
\[
A_i
\coloneqq
\left\{
\left|(M-M^*)_{i:}u^*\right|
\le
8\,\mathcal R_n(u^*)
\right\}.
\]
Since \(u^*\) is deterministic under \(\P_S\), Lemma~\ref{lem:row-bernstein-general} applies directly. Now \(C=8\) satisfies
\[
8\ge \max\left\{\sqrt{2(11+1)},\frac23(11+1)\right\}
=\max\{\sqrt{24},8\},
\]
so Lemma~\ref{lem:row-bernstein-general}, Lemma~\ref{lem:row-bernstein-general}, applied with \(c=11\) and \(C=8\), gives
\[
\P_S(A_i^c)\le 2n^{-12}
\qquad\text{for each }i\in[n].
\]
Since
\[
E_2(u^*)=\bigcap_{i=1}^n A_i,
\]
a union bound gives, for all \(n\ge 2\),
\begin{equation*}
\P_S\left(E_2(u^*)^c\right)
\le
\sum_{i=1}^n \P_S(A_i^c)
\le
2n^{-11}
\le
n^{-10}. \qedhere
\end{equation*}
\end{proof}

\begin{proposition}[Normalized row Bernstein radius]\label{prop:row_radius_normalized}
Assume
\eqref{eq:recovery_thm_scaling}. Then, for every \(v\in\R^n\),
\begin{equation}\label{eq:Rn_over_lam_general}
\frac{\mathcal R_n(v)}{\lambda^*}
\wle
o(1)\|u^*\|_\infty\,\|v\|_2
+
o(1)\|u^*\|_\infty\,\|v\|_\infty,
\end{equation}
where the \(o(1)\) terms are deterministic and independent of \(v\). In particular,
\begin{equation}\label{eq:Rn_u_star_over_lam}
\frac{\mathcal R_n(u^*)}{\lambda^*}=o(\|u^*\|_\infty).
\end{equation}
\end{proposition}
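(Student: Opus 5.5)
The bound is a direct scale comparison. It follows the same template as the proof of Claim~\ref{clm:vector_radius_normalized}, with Lemma~\ref{lem:lamstar_scale} supplying the needed asymptotics.

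By Definition~\ref{def:scalar_bernstein_notation},
\[
\frac{\mathcal R_n(v)}{\lambda^*}=\frac{V_n}{\lambda^*}\|v\|_2+\frac{K_n}{\lambda^*}\|v\|_\infty ,
\]
so it suffices to show that the two deterministic coefficients satisfy
\[
\frac{V_n}{\lambda^*}=o\!\left(\|u^*\|_\infty\right),
\qquad
\frac{K_n}{\lambda^*}=o\!\left(\|u^*\|_\infty\right).
\]
These coefficients depend only on $(n,k,d,p)$, so the resulting $o(1)$ terms are automatically independent of $v$.

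Since $\|u^*\|_\infty^{-1}=\sqrt k$, we need $\sqrt k\,V_n/\lambda^*\to0$ and $\sqrt k\,K_n/\lambda^*\to0$.

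For the first coefficient, use $\binom{n-2}{d-2}\le n^{d-2}$ and $1-p\le1$ to get
\[
\sqrt k\,V_n\le\sqrt{d-1}\,\sqrt{p\,k\,n^{d-2}\log n}.
\]
The second conclusion in \eqref{eq:lamstar_scale} then gives $\sqrt k\,V_n/\lambda^*=o(1)$.

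For the second coefficient, $\sqrt k\,K_n=(d-1)\sqrt k\log n$. The third conclusion in \eqref{eq:lamstar_scale} gives $\sqrt k\,K_n/\lambda^*=o(1)$. This proves \eqref{eq:Rn_over_lam_general}.

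For \eqref{eq:Rn_u_star_over_lam}, substitute $v=u^*$, for which $\|u^*\|_2=1$ and $\|u^*\|_\infty=k^{-1/2}\le1$. The right-hand side of \eqref{eq:Rn_over_lam_general} becomes
\[
o(1)\|u^*\|_\infty+o(1)\|u^*\|_\infty^2\le o(1)\|u^*\|_\infty .
\]

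There is no real obstacle; the one point to check is that the factor $\sqrt{d-1}$ in $V_n$ and the factor $d-1$ in $K_n$ are constants in $d$. They are, because $d$ is fixed, so the $o(1)$ rates coming from Lemma~\ref{lem:lamstar_scale} are unaffected. The same argument covers the sparse regime $p=p_n$, since Lemma~\ref{lem:lamstar_scale} already includes that extension.
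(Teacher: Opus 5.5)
Your proposal is correct and follows the paper's proof essentially line for line: you split $\mathcal R_n(v)/\lambda^*$ into its two deterministic coefficients, bound $\sqrt{k}\,V_n/\lambda^*$ and $\sqrt{k}\,K_n/\lambda^*$ using the second and third relations of Lemma~\ref{lem:lamstar_scale}, and then specialize to $v=u^*$. Your explicit remark that $\|u^*\|_\infty\le 1$ absorbs the $o(1)\|u^*\|_\infty^2$ term is a small clarification that the paper leaves implicit.
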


\begin{proof}
By Definition~\ref{def:scalar_bernstein_notation},
\[
\mathcal R_n(v)=V_n\|v\|_2+K_n\|v\|_\infty,
\]
where
\[
V_n=\sqrt{p(1-p)(d-1)\binom{n-2}{d-2}\log n},
\qquad
K_n=(d-1)\log n.
\]
Since \(\|u^*\|_\infty^{-1}=\sqrt{k}\), we have
\[
\frac{V_n}{\lambda^*\,\|u^*\|_\infty}
=
\frac{\sqrt{k}\,V_n}{\lambda^*}
\le
\sqrt{d-1}\,
\frac{\sqrt{p\,k\,n^{d-2}\log n}}{\lambda^*}
=
o(1),
\]
where we used \(\binom{n-2}{d-2}\le n^{d-2}\) and Lemma~\ref{lem:lamstar_scale}. Also,
\[
\frac{K_n}{\lambda^*\,\|u^*\|_\infty}
=
(d-1)\frac{\sqrt{k}\,\log n}{\lambda^*}
=
o(1),
\]
again by Lemma~\ref{lem:lamstar_scale}.

Now for any \(v\in\R^n\),
\[
\frac{\mathcal R_n(v)}{\lambda^*}
=
\frac{V_n}{\lambda^*\,\|u^*\|_\infty}\,\|u^*\|_\infty\|v\|_2
+
\frac{K_n}{\lambda^*\,\|u^*\|_\infty}\,\|u^*\|_\infty\|v\|_\infty.
\]
Substituting the preceding two bounds yields \eqref{eq:Rn_over_lam_general}. Finally, taking
\(v=u^*\) and using \(\|u^*\|_2=1\) gives \eqref{eq:Rn_u_star_over_lam}.
\end{proof}

\begin{lemma}\label{lem:row-action-vstar_normalized_rates}
Fix \(d\ge 3\) and \(0<p<1\), and assume \eqref{eq:recovery_thm_scaling}. On the event \(E_2\left(u^*\right)\),
\[
\frac{\|Mu^*\|_\infty}{\lambda^*}\wle (1+o(1))\|u^*\|_\infty .
\]
\end{lemma}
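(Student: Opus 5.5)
The plan is to split $Mu^*$ into its mean part and its fluctuation, then bound each entrywise. Write
\[
Mu^* = M^*u^* + (M-M^*)u^*,
\]
and note that $M^*u^*=\lambda^*u^*$ because $(\lambda^*,u^*)$ is the leading eigenpair of $M^*$ from \eqref{eq:lambda_star}. The triangle inequality in $\|\cdot\|_\infty$ then gives
\[
\frac{\|Mu^*\|_\infty}{\lambda^*}
\le
\|u^*\|_\infty + \frac{\|(M-M^*)u^*\|_\infty}{\lambda^*}.
\]
It therefore suffices to show that the second term is $o(\|u^*\|_\infty)$ on $E_2(u^*)$.

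For the fluctuation term, I use Definition~\ref{def:event_E2} with $v^{(-i)}=u^*$ for every $i$. On $E_2(u^*)$,
\[
|(M-M^*)_{i:}u^*|\le 8\,\mathcal R_n(u^*)
\qquad\text{for all } i\in[n],
\]
so taking the maximum over $i$ gives $\|(M-M^*)u^*\|_\infty\le 8\,\mathcal R_n(u^*)$. Dividing by $\lambda^*$ and applying \eqref{eq:Rn_u_star_over_lam} from Proposition~\ref{prop:row_radius_normalized}, I get
\[
\frac{8\,\mathcal R_n(u^*)}{\lambda^*}=o(\|u^*\|_\infty),
\]
where the $o(1)$ factor is deterministic. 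Combining the two displays yields $\|Mu^*\|_\infty/\lambda^*\le (1+o(1))\|u^*\|_\infty$, which is the claim.

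I do not expect a genuine obstacle here, since the probabilistic content is already in Claim~\ref{clm:row_bernstein_u_star} and the scaling content is in Lemma~\ref{lem:lamstar_scale}. The one point to check is the eigen-relation $M^*u^*=\lambda^*u^*$ despite the $-I_S$ diagonal correction. For $i\in S$ we have $(\1_S\1_S^\top - I_S)\1_S=(k-1)\1_S$, and for $i\notin S$ the corresponding entry is $0$. Hence $M^*u^*=(w_{\mathrm{in}}-w_{\mathrm{out}})(k-1)u^*=\lambda^*u^*$ exactly, with no remainder term.
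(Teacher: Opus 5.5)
Your proposal is correct and follows the paper's proof essentially verbatim: split off $M^*u^*=\lambda^*u^*$, bound $\|(M-M^*)u^*\|_\infty\le 8\,\mathcal R_n(u^*)$ on $E_2(u^*)$, and conclude with \eqref{eq:Rn_u_star_over_lam} from Proposition~\ref{prop:row_radius_normalized}. Your explicit verification that $(\1_S\1_S^\top-I_S)\1_S=(k-1)\1_S$ is a harmless extra check that the paper takes for granted from \eqref{eq:lambda_star}.
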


\begin{proof}
Since \(M^*u^*=\lambda^*u^*\),
\[
\frac{\|Mu^*\|_\infty}{\lambda^*}
\wle
\frac{\|(M-M^*)u^*\|_\infty}{\lambda^*}+\|u^*\|_\infty
\wle
\|u^*\|_\infty+\frac{8}{\lambda^*}\mathcal R_n(u^*).
\]
Using Proposition~\ref{prop:row_radius_normalized} 
concludes the proof.
\end{proof}

\begin{lemma}[Two-to-infinity bound]\label{lem:2toinf-M}
Assume \eqref{eq:recovery_thm_scaling}. Then:
\begin{enumerate}[label=(\alph*)]
\item \(\|M^*\|_{2\to\infty}=(1+o(1))\lambda^*\,\|u^*\|_\infty\).
\label{eq:pop_2_to_inf}
\item On the event \(E_1\), \(\|M\|_{2\to\infty}=o(\lambda^*)\),
\label{eq:2toinf-M-new}
\end{enumerate}
\end{lemma}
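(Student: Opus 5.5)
For part (a), I will compute the rows of \(M^*\) explicitly. Since \(M^*=(w_{\mathrm{in}}-w_{\mathrm{out}})(\1_S\1_S^\top-I_S)\), every row indexed by \(m\notin S\) vanishes. A row indexed by \(m\in S\) has exactly \(k-1\) nonzero entries, each equal to \(w_{\mathrm{in}}-w_{\mathrm{out}}\), so
\[
\|M^*\|_{2\to\infty}=(w_{\mathrm{in}}-w_{\mathrm{out}})\sqrt{k-1}.
\]
By \eqref{eq:lambda_star}, \(\lambda^*=(w_{\mathrm{in}}-w_{\mathrm{out}})(k-1)\). Hence
\[
\|M^*\|_{2\to\infty}=\frac{\lambda^*}{\sqrt{k-1}}=\sqrt{\frac{k}{k-1}}\,\lambda^*\|u^*\|_\infty .
\]
The proof of Lemma~\ref{lem:lamstar_scale} shows that \(k\to\infty\), so \(\sqrt{k/(k-1)}=1+o(1)\), and this gives (a).

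For part (b), I will use the triangle inequality for the \(2\to\infty\) norm together with the fact that it is dominated by the spectral norm: \(\max_i\|X_{i:}\|_2\le\|X\|\), since \(\|X_{i:}\|_2=\|X^\top e_i\|_2\). This yields
\[
\|M\|_{2\to\infty}\le \|M^*\|_{2\to\infty}+\|M-M^*\|.
\]
For the first term, part (a) gives \(\|M^*\|_{2\to\infty}=(1+o(1))\lambda^*/\sqrt{k}=o(\lambda^*)\), again because \(k\to\infty\). For the second term, on \(E_1\) Lemma~\ref{lem:Delta0_vanish} gives \(\|M-M^*\|=\Delta_0\lambda^*=o(\lambda^*)\). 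Adding the two bounds proves (b).

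Neither part has a real obstacle; the argument is bookkeeping. The one point that needs care is \(k\to\infty\). It follows from \eqref{eq:recovery_thm_scaling}: for fixed \(p\), that condition forces \(k\gg\sqrt n\), and in the sparse case it is handled in the proof of Lemma~\ref{lem:lamstar_scale}. I will cite that argument rather than redo it.
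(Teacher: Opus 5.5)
Your proposal is correct and follows the paper's proof essentially line for line. For (a) you compute the rows of \(M^*\) to get \(\lambda^*/\sqrt{k-1}\) and use \(k\to\infty\). For (b) you bound \(\|M\|_{2\to\infty}\le\|M-M^*\|+\|M^*\|_{2\to\infty}\), then apply Lemma~\ref{lem:Delta0_vanish} on \(E_1\) to the first term and part (a) to the second.
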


\begin{proof}
Using \(M^*=(w_{\mathrm{in}}-w_{\mathrm{out}})(\1_S\1_S^\top-I_S)\), rows with \(i\notin S\) vanish, while for \(i\in S\) the row has \((k-1)\) nonzero entries equal to \(w_{\mathrm{in}}-w_{\mathrm{out}}\). Hence
\[
\|M^*\|_{2\to\infty}
=\sqrt{k-1}\,(w_{\mathrm{in}}-w_{\mathrm{out}})
=\frac{\lambda^*}{\sqrt{k-1}}
=(1+o(1))\lambda^*\,\|u^*\|_\infty,
\]
which proves \ref{eq:pop_2_to_inf}. Also, for any matrix \(A\), \(\|A\|_{2\to\infty}=\max_i\|A_{i:}\|_2\le \|A\|\), so on \(E_1\),
\[
\frac{\|M\|_{2\to\infty}}{\lambda^*}
\wle
\frac{\|M-M^*\|}{\lambda^*}+\frac{\|M^*\|_{2\to\infty}}{\lambda^*}.
\]
By Lemma~\ref{lem:Delta0_vanish}, the first term is \(o(1)\), and by \ref{eq:pop_2_to_inf} the second term is \(1/\sqrt{k-1}=o(1)\), since \(k\to\infty\) under \eqref{eq:recovery_thm_scaling}. This proves \ref{eq:2toinf-M-new}.
\end{proof}

\begin{lemma}[Normalized row transform]\label{lem:row_transform_normalized}
Assume \eqref{eq:recovery_thm_scaling}. Let \(\{v^{(-m)}\}_{m=1}^n\subset\R^n\). Then, on \(E_2\left(\{v^{(-m)}\}_{m=1}^n\right)\),
\[
\max_{m\in[n]}\frac{|M_{m:}v^{(-m)}|}{\lambda^*}
\wle
(1+o(1))\,\|u^*\|_\infty\max_{m\in[n]}\|v^{(-m)}\|_2
+
o(1)\|u^*\|_\infty\max_{m\in[n]}\|v^{(-m)}\|_\infty.
\]
\end{lemma}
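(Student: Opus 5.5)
The plan is to split each row product into its population part and its fluctuation part, $M_{m:}v^{(-m)} = M^*_{m:}v^{(-m)} + (M-M^*)_{m:}v^{(-m)}$. The two pieces are then bounded separately, the first deterministically and the second on the event $E_2$. Fix $m\in[n]$.

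For the population part, use Cauchy--Schwarz: $|M^*_{m:}v^{(-m)}|\le \|M^*_{m:}\|_2\|v^{(-m)}\|_2\le \|M^*\|_{2\to\infty}\|v^{(-m)}\|_2$. By Lemma~\ref{lem:2toinf-M}\ref{eq:pop_2_to_inf}, $\|M^*\|_{2\to\infty}=(1+o(1))\lambda^*\|u^*\|_\infty$. Since this is an exact computation ($\lambda^*/\sqrt{k-1}$), the $(1+o(1))$ factor is deterministic and independent of $m$ and of $v^{(-m)}$. Dividing by $\lambda^*$ therefore gives the first term $(1+o(1))\|u^*\|_\infty\|v^{(-m)}\|_2$.

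For the fluctuation part, the definition of $E_2(\{v^{(-m)}\})$ in \eqref{eq:E2_def} gives $|(M-M^*)_{m:}v^{(-m)}|\le 8\mathcal R_n(v^{(-m)})$ for every $m$ simultaneously. Proposition~\ref{prop:row_radius_normalized} then yields
\[
\frac{8\mathcal R_n(v^{(-m)})}{\lambda^*}\le o(1)\|u^*\|_\infty\|v^{(-m)}\|_2+o(1)\|u^*\|_\infty\|v^{(-m)}\|_\infty .
\]
The $o(1)$ terms here are deterministic and independent of $v$, so the constant $8$ is absorbed without affecting uniformity in $m$.

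Adding the two bounds, the $o(1)\|u^*\|_\infty\|v^{(-m)}\|_2$ term from the fluctuation merges with the $(1+o(1))$ coefficient from the population part. Taking the maximum over $m$ and bounding $\|v^{(-m)}\|_2$ and $\|v^{(-m)}\|_\infty$ by their maxima gives the claim. The only point needing care is this uniformity: every $o(1)$ used must be a deterministic sequence, free of $m$ and of the vectors, so that the maximum over $m$ does not disturb it. This holds because both Lemma~\ref{lem:2toinf-M}\ref{eq:pop_2_to_inf} and Proposition~\ref{prop:row_radius_normalized} provide deterministic rates. Nothing else poses an obstacle, since all probabilistic content sits inside the event $E_2$.
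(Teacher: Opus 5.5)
Your proposal is correct and follows essentially the same route as the paper: split $M_{m:}v^{(-m)}$ into its $M^*$ and $M-M^*$ parts, bound the population part by Cauchy--Schwarz together with Lemma~\ref{lem:2toinf-M}\ref{eq:pop_2_to_inf}, and bound the fluctuation part on $E_2$ via Proposition~\ref{prop:row_radius_normalized}. The only cosmetic difference is that you use $\|M^*_{m:}\|_2\le\|M^*\|_{2\to\infty}$ directly, whereas the paper passes through the column $M^*_{:m}$ by symmetry. Your remark that all the $o(1)$ rates are deterministic and free of $m$ and $v$ is exactly what justifies taking the maximum over $m$.
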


\begin{proof}
Fix \(m\in[n]\). By the triangle inequality,
\[
|M_{m:}v^{(-m)}|
\le
|(M-M^*)_{m:}v^{(-m)}|+|M^*_{m:}v^{(-m)}|.
\]
On \(E_2\left(\{v^{(-m)}\}_{m=1}^n\right)\),
\[
|(M-M^*)_{m:}v^{(-m)}|\le 8\,\mathcal R_n(v^{(-m)}).
\]
Dividing by \(\lambda^*\) and using Proposition~\ref{prop:row_radius_normalized} gives
\[
\frac{|(M-M^*)_{m:}v^{(-m)}|}{\lambda^*}
\wle
o(1)\|u^*\|_\infty\,\|v^{(-m)}\|_2
+
o(1)\|u^*\|_\infty\,\|v^{(-m)}\|_\infty.
\]
Also, since \(M^*\) is symmetric, \(M^*_{m:}=(M^*_{:m})^\top\), so by the Cauchy--Schwarz inequality \eqref{ineq:cs},
\[
|M^*_{m:}v^{(-m)}|
=
|\langle M^*_{:m},v^{(-m)}\rangle|
\wle
\|M^*_{:m}\|_2\,\|v^{(-m)}\|_2
\wle
\|M^*\|_{2\to\infty}\,\|v^{(-m)}\|_2.
\]
Using Lemma~\ref{lem:2toinf-M}\ref{eq:pop_2_to_inf}, we obtain
\[
\frac{|M^*_{m:}v^{(-m)}|}{\lambda^*}
\wle
(1+o(1))\,\|u^*\|_\infty\,\|v^{(-m)}\|_2.
\]
Combining the last two displays and taking the maximum over \(m\) proves the claim.
\end{proof}

\begin{claim}
\label{clm:PS_E2_u_m}
\[
\P_S\left(E_2\left(\{u^{(-m)}-u^*\}_{m=1}^n\right)\right)\ge 1-2n^{-10}.
\]
\end{claim}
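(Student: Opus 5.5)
The idea is to reduce the claim to two applications of the row Bernstein lemma (Lemma~\ref{lem:row-bernstein-general}). Each application is made conditionally on \(\cF^{(-m)}\) and uses the \(\cF^{(-m)}\)-measurable vector \(\bar u^{(-m)}\) in place of \(u^{(-m)}\). The sign \(s_m\) depends on \(u\), so it is not \(\cF^{(-m)}\)-measurable. We therefore bound the two candidate vectors \(\bar u^{(-m)}-u^*\) and \(-\bar u^{(-m)}-u^*\) separately. Since \(u^{(-m)}-u^*\) always equals one of them, controlling both controls the actual vector. For each \(m\) and each \(s\in\{\pm1\}\), let
\[
A_m^{s}\coloneqq\left\{\left|(M-M^*)_{m:}(s\bar u^{(-m)}-u^*)\right|\le 8\,\mathcal R_n(s\bar u^{(-m)}-u^*)\right\}.
\]
Then \(\bigcap_{m}\bigcap_{s}A_m^{s}\subseteq E_2(\{u^{(-m)}-u^*\}_{m=1}^n)\).

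The first step is the measurability split. The row \((M-M^*)_{m:}\) equals \(\sum_{e\in\cE_{m,\mathrm{out}}}(H_e-p)(B_e)_{m:}\). The planted hyperedges are deterministic and cancel against \(M^*\), and entry \(j\) involves exactly the hyperedges containing both \(m\) and \(j\). Hence the row is \(\cF^{(m)}\)-measurable, with independent centered summands over \(\cE_{m,\mathrm{out}}\). On the other side, \(v\coloneqq s\bar u^{(-m)}-u^*\) is \(\cF^{(-m)}\)-measurable, because \(u^*\) is deterministic under \(\P_S\). Under \(\P_S\) the two \(\sigma\)-algebras are independent. So, conditional on \(\cF^{(-m)}\), \(v\) is a fixed vector and the row still has its unconditional law.

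The second step applies Lemma~\ref{lem:row-bernstein-general} with \(c=11\) and \(C=8\) to this fixed \(v\), exactly as in Claim~\ref{clm:row_bernstein_u_star}. This gives \(\P_S((A_m^{s})^c\mid\cF^{(-m)})\le 2n^{-12}\). The lemma only needs \(v\) to be deterministic, so we then take expectations to remove the conditioning. A union bound over \(m\in[n]\) and \(s\in\{\pm1\}\) gives total failure probability at most \(4n^{-11}\le 2n^{-10}\) for \(n\ge2\), which is the claimed bound.

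The main thing to check is that Lemma~\ref{lem:row-bernstein-general} really applies to an arbitrary fixed \(v\). In particular, its variance proxy \(V_n\|v\|_2\) and envelope \(K_n\|v\|_\infty\) must not use \(v_m=0\) or any structure specific to \(u^*\). The variance satisfies \(\sum_{e\ni m}p(1-p)\big(\sum_{j\in e\setminus\{m\}}v_j\big)^2\le p(1-p)(d-1)\binom{n-2}{d-2}\|v\|_2^2\), and each summand is bounded by \((d-1)\|v\|_\infty\). Both bounds hold for a general \(v\), which is consistent with how the lemma was used before. With that confirmed, the only delicate point remaining is the sign issue, and it is handled by taking the union over the two signs.
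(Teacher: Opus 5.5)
Your proposal is correct and follows the paper's proof essentially step for step: it defines the two events $A_m^{\pm}$ for $\pm\bar u^{(-m)}-u^*$, applies Lemma~\ref{lem:row-bernstein-general} conditionally on $\cF^{(-m)}$ with $c=11$ and $C=8$, removes the conditioning by taking expectations, and concludes via the two-sign containment and a union bound giving $4n^{-11}\le 2n^{-10}$. Your extra check that the lemma's variance and envelope bounds hold for an arbitrary fixed $v$ is correct, and it is exactly what Lemma~\ref{lem:sum-ae2} provides.
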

\begin{proof}
For each \(m\in[n]\), define
\[
A_m^{\pm}
\coloneqq
\left\{
\left|(M-M^*)_{m:}(\pm \bar u^{(-m)}-u^*)\right|
\le
8\,\mathcal R_n(\pm \bar u^{(-m)}-u^*)
\right\}.
\]
Since \(\bar u^{(-m)}\) is \(\cF^{(-m)}\)-measurable, both
\(\pm \bar u^{(-m)}-u^*\) are \(\cF^{(-m)}\)-measurable. Therefore
Lemma~\ref{lem:row-bernstein-general}, applied conditionally on \(\cF^{(-m)}\) with \(c=11\) and \(C=8\), gives
\[
\P_S\left((A_m^{\pm})^c\,\middle|\,\cF^{(-m)}\right)\le 2n^{-12}
\qquad\text{a.s.}
\]
Taking expectations,
\[
\P_S\left((A_m^{\pm})^c\right)\le 2n^{-12}.
\]

Since \(u^{(-m)}=s_m\bar u^{(-m)}\) for some \(s_m\in\{\pm1\}\), we have, for every realization,
\[
\left\{
\left|(M-M^*)_{m:}(u^{(-m)}-u^*)\right|
>
8\,\mathcal R_n(u^{(-m)}-u^*)
\right\}
\subseteq
(A_m^+)^c\cup (A_m^-)^c.
\]
Hence
\[
\P_S\left(
\left|(M-M^*)_{m:}(u^{(-m)}-u^*)\right|
>
8\,\mathcal R_n(u^{(-m)}-u^*)
\right)
\le 4n^{-12}.
\]
A union bound over \(m\in[n]\) yields
\[
\P_S\left(E_2\left(\{u^{(-m)}-u^*\}_{m=1}^n\right)^c\right)
\le 4n^{-11}\le 2n^{-10}
\]
for all \(n\ge 2\), proving the claim.
\end{proof}

\begin{lemma}[Uniform leave--one--out row transform]\label{lem:loo-row-action}
Assume \eqref{eq:recovery_thm_scaling}. On the event
\[
E_1\cap E_{\mathrm{op}}\cap E_{\mathrm{vec}}\cap E_2\left(\{u^{(-m)}-u^*\}_{m=1}^n\right),
\]
for all sufficiently large \(n\),
\[
\max_{m\in[n]}\frac{|M_{m:}(u^{(-m)}-u^*)|}{\lambda^*}
=o(\|u^*\|_\infty).
\]
\end{lemma}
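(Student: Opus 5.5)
The plan is to apply Lemma~\ref{lem:row_transform_normalized} with the family $v^{(-m)}\coloneqq u^{(-m)}-u^*$. This is legitimate because we work on $E_2(\{u^{(-m)}-u^*\}_{m=1}^n)$, which is exactly the hypothesis of that lemma for this choice. It yields
\[
\max_{m\in[n]}\frac{|M_{m:}(u^{(-m)}-u^*)|}{\lambda^*}
\le
(1+o(1))\|u^*\|_\infty\max_{m\in[n]}\|u^{(-m)}-u^*\|_2
+
o(1)\|u^*\|_\infty\max_{m\in[n]}\|u^{(-m)}-u^*\|_\infty .
\]
The two $o(1)$ factors are deterministic, coming from Proposition~\ref{prop:row_radius_normalized} and Lemma~\ref{lem:2toinf-M}\ref{eq:pop_2_to_inf}. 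So it only remains to check that the two maxima on the right are $o(1)$ and $O(1)$, respectively.

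For the $\ell_2$ term, work on $E_1\cap E_{\mathrm{op}}\cap E_{\mathrm{vec}}$. Lemma~\ref{lem:loo-evec-comparison-simplified}\ref{eq:loo-L2-simplified} gives $\max_m\|u^{(-m)}-u^*\|_2=o(1)$, uniformly in $m$. This uses the sign alignment $\langle u^{(-m)},u^*\rangle\ge0$ from Lemma~\ref{lem:loo_sign_alignment_uniform}, which holds for all sufficiently large $n$; that is where the phrase ``for all sufficiently large $n$'' in the statement comes from. The first term is therefore $(1+o(1))\cdot o(1)\,\|u^*\|_\infty=o(\|u^*\|_\infty)$.

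For the $\ell_\infty$ term, the crude bound from Lemma~\ref{lem:loo-evec-comparison-simplified}\ref{eq:loo-Linf-simplified} gives $\max_m\|u^{(-m)}-u^*\|_\infty\le 2$. The second term is then at most $2\cdot o(1)\|u^*\|_\infty=o(\|u^*\|_\infty)$. Adding the two contributions gives the claim.

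The only delicate point is to make sure the $o(1)$ coefficient multiplying the $\ell_\infty$ norm is genuinely independent of the random vectors $v^{(-m)}$. It is: Proposition~\ref{prop:row_radius_normalized} states that these terms depend only on $n,k,p,d$ through the rates in Lemma~\ref{lem:lamstar_scale}. The crude constant $2$ is therefore harmless, and no refined $\ell_\infty$ control of $u^{(-m)}$ is needed. Consequently I do not expect a real obstacle; the argument is a direct combination of the cited lemmas on the stated intersection of events.
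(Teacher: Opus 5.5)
Your proposal is correct and matches the paper's proof essentially step for step: apply Lemma~\ref{lem:row_transform_normalized} with $v^{(-m)}=u^{(-m)}-u^*$, bound the $\ell_2$ term by Lemma~\ref{lem:loo-evec-comparison-simplified}\ref{eq:loo-L2-simplified}, and bound the $\ell_\infty$ term by the crude constant $2$ from Lemma~\ref{lem:loo-evec-comparison-simplified}\ref{eq:loo-Linf-simplified}. The crude constant is harmless precisely because, as you note, the $o(1)$ coefficient from Proposition~\ref{prop:row_radius_normalized} is deterministic and independent of $v^{(-m)}$.
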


\begin{proof}
Work on \(E_1\cap E_{\mathrm{op}}\cap E_{\mathrm{vec}}\cap E_2\left(\{u^{(-m)}-u^*\}_{m=1}^n\right)\). Applying Lemma~\ref{lem:row_transform_normalized} with \(v^{(-m)}=u^{(-m)}-u^*\) yields
\begin{align}
\max_{m\in[n]}\frac{|M_{m:}(u^{(-m)}-u^*)|}{\lambda^*}
\le~
(1+o(1))\,&\|u^*\|_\infty\max_{m\in[n]}\|u^{(-m)}-u^*\|_2\nonumber\\
+~
o(1)&\|u^*\|_\infty\max_{m\in[n]}\|u^{(-m)}-u^*\|_\infty.
\label{eq:loo-row-action}
\end{align}
By Lemma~\ref{lem:loo-evec-comparison-simplified}\ref{eq:loo-L2-simplified},
\begin{equation}
\max_{m\in[n]}\|u^{(-m)}-u^*\|_2=o(1),
\label{eq:nom_bound_l2}
\end{equation}
and by Lemma~\ref{lem:loo-evec-comparison-simplified}\ref{eq:loo-Linf-simplified},
\begin{equation}
\max_{m\in[n]}\|u^{(-m)}-u^*\|_\infty\le 2.
\label{eq:nom_bound_sup}
\end{equation}
Substituting \eqref{eq:nom_bound_l2} and \eqref{eq:nom_bound_sup} into \eqref{eq:loo-row-action} concludes the proof.
\end{proof}

\subsubsection{Conclusion of the proof}
In this section, we prove Theorem \ref{thm:recovery_sim} using the results from the previous sections.
\begin{lemma}\label{lem:one-shot}
Assume \eqref{eq:recovery_thm_scaling}. On the event
\[
E_1 \cap E_2\left(u^*\right)\cap E_2\left(\{u^{(-m)}-u^*\}_{m=1}^n\right)\cap E_{\mathrm{op}}\cap E_{\mathrm{vec}},
\]
for all sufficiently large \(n\),
\[
\left\|u-\frac{Mu^*}{\lambda^*}\right\|_\infty
= o(1)\|u^*\|_\infty.
\]
\end{lemma}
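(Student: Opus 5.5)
The plan is to apply Proposition~\ref{prop:chain-Mu-star} directly, and then show that each of the four quantities in \eqref{eq:key_quantities} is controlled on the stated intersection of events. First, on \(E_1\), Lemma~\ref{lem:Delta0_vanish} gives \(\Delta_0=o(1)\). In particular \(\Delta_0\le\frac12\) for all sufficiently large \(n\), so the hypothesis of Proposition~\ref{prop:chain-Mu-star} is met and
\[
\left\|u-\frac{Mu^*}{\lambda^*}\right\|_\infty\le 2\Delta_0\frac{\|Mu^*\|_\infty}{\lambda^*}+2r_n .
\]

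For the first term we work on \(E_2(u^*)\). There Lemma~\ref{lem:row-action-vstar_normalized_rates} gives \(\|Mu^*\|_\infty/\lambda^*\le(1+o(1))\|u^*\|_\infty\). Multiplying by \(2\Delta_0=o(1)\) shows the first term is \(o(1)\|u^*\|_\infty\).

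For \(r_n\) we treat its two pieces separately. The row piece \(\max_m|M_{m:}(u^{(-m)}-u^*)|/\lambda^*\) is handled on \(E_1\cap E_{\mathrm{op}}\cap E_{\mathrm{vec}}\cap E_2(\{u^{(-m)}-u^*\})\). There Lemma~\ref{lem:loo-row-action} shows it is \(o(\|u^*\|_\infty)\) directly.

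The product piece is \(\|M\|_{2\to\infty}\max_m\|u-u^{(-m)}\|_2/\lambda^*\). We combine Lemma~\ref{lem:2toinf-M}\ref{eq:2toinf-M-new}, which gives \(\|M\|_{2\to\infty}/\lambda^*=o(1)\) on \(E_1\), with Lemma~\ref{lem:loo-evec-comparison-simplified}\ref{eq:full-vs-loo-L2-simplified}, which gives \(\max_m\|u-u^{(-m)}\|_2\le(2\sqrt2 d+o(1))\|u^*\|_\infty\) on \(E_1\cap E_{\mathrm{op}}\cap E_{\mathrm{vec}}\). Since \(d\) is fixed, the product is \(o(1)\cdot O(d)\|u^*\|_\infty=o(1)\|u^*\|_\infty\). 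Summing, \(2r_n=o(1)\|u^*\|_\infty\), and the lemma follows.

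The point requiring the most care is bookkeeping, not estimation. All cited lemmas must hold simultaneously on the single intersection event, and each \(o(1)\) must be deterministic and uniform in \(m\). The \(o(1)\) terms from Lemma~\ref{lem:lamstar_scale}, Proposition~\ref{prop:row_radius_normalized} and Corollary~\ref{cor:Wm_op_littleo} depend only on \(n\), so this holds. The sign convention of Remark~\ref{rem:sign} also matters: the \(u^{(-m)}\) appearing in \(r_n\) must be the same sign-fixed vectors used in Lemma~\ref{lem:loo-row-action} and Lemma~\ref{lem:loo-evec-comparison-simplified}. It is, by construction, so no further work is needed there.
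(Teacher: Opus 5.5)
Your proposal is correct and follows the paper's proof essentially step for step. You invoke Proposition~\ref{prop:chain-Mu-star} once $\Delta_0\le\frac12$ on $E_1$, control $2\Delta_0\|Mu^*\|_\infty/\lambda^*$ via Lemma~\ref{lem:row-action-vstar_normalized_rates}, and bound $r_n$ by Lemma~\ref{lem:loo-row-action} for the row piece and by Lemma~\ref{lem:2toinf-M}\ref{eq:2toinf-M-new} combined with Lemma~\ref{lem:loo-evec-comparison-simplified}\ref{eq:full-vs-loo-L2-simplified} for the product piece; your remarks on the uniformity of the $o(1)$ terms and on the sign convention of Remark~\ref{rem:sign} are consistent with how the paper sets these up.
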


\begin{proof}
Since \(\Delta_0=o(1)\) on \(E_1\) by Lemma~\ref{lem:Delta0_vanish}, we have \(\Delta_0\le 1/2\) for all sufficiently large \(n\). Hence Proposition~\ref{prop:chain-Mu-star} applies. Let \(r_n\) be as in Proposition~\ref{prop:chain-Mu-star}. Under \(E_1\), Proposition~\ref{prop:chain-Mu-star} gives
\begin{equation}
\left\|u-\frac{Mu^*}{\lambda^*}\right\|_\infty
\le
2\Delta_0\,\frac{\|Mu^*\|_\infty}{\lambda^*}+2r_n.
\label{eq:prop:chain-Mu-star_rep}
\end{equation}
By Lemma~\ref{lem:loo-row-action},
\[
\frac{1}{\lambda^*}\max_{m\in[n]}|M_{m:}(u^{(-m)}-u^*)|
=o(\|u^*\|_\infty),
\]
and by Lemma~\ref{lem:2toinf-M}\ref{eq:2toinf-M-new} together with Lemma~\ref{lem:loo-evec-comparison-simplified}\ref{eq:full-vs-loo-L2-simplified},
\[
\frac{\|M\|_{2\to\infty}}{\lambda^*}\max_{m\in[n]}\|u-u^{(-m)}\|_2
=
o(1)\cdot O(\|u^*\|_\infty)
=
o(\|u^*\|_\infty).
\]
Hence
\begin{equation}
r_n=o(\|u^*\|_\infty).
\label{eq:r_bound}
\end{equation}
Since, by Lemma~\ref{lem:row-action-vstar_normalized_rates},
\begin{equation}
\frac{\|Mu^*\|_\infty}{\lambda^*}\wle (1+o(1))\|u^*\|_\infty,
\label{eq:Mu_bound}
\end{equation}
and \(\Delta_0=o(1)\) on \(E_1\), substituting \eqref{eq:r_bound} and \eqref{eq:Mu_bound} into \eqref{eq:prop:chain-Mu-star_rep} gives
\[
\left\|u-\frac{Mu^*}{\lambda^*}\right\|_\infty=o(\|u^*\|_\infty),
\]
which proves the claim.
\end{proof}

\begin{lemma}\label{lem:pivot-Mu-star}
Assume \eqref{eq:recovery_thm_scaling}. Then, on \(E_2\left(u^*\right)\), for all sufficiently large \(n\),
\[
\left\|\frac{Mu^*}{\lambda^*}-u^*\right\|_\infty
=o(1)\|u^*\|_\infty.
\]
\end{lemma}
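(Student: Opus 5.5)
The plan is to use the eigen-relation $M^*u^*=\lambda^*u^*$ to isolate the pure fluctuation term and then apply the row Bernstein bound already encoded in $E_2(u^*)$. Since $M^*u^*=\lambda^*u^*$, we have the exact identity
\[
\frac{Mu^*}{\lambda^*}-u^*=\frac{(M-M^*)u^*}{\lambda^*},
\]
and therefore
\[
\left\|\frac{Mu^*}{\lambda^*}-u^*\right\|_\infty=\frac{1}{\lambda^*}\max_{i\in[n]}\left|(M-M^*)_{i:}u^*\right|.
\]

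Next we work on the event $E_2(u^*)$ from Definition~\ref{def:event_E2} with $v^{(-i)}=u^*$ for every $i$. On this event each row satisfies $|(M-M^*)_{i:}u^*|\le 8\,\mathcal R_n(u^*)$. Taking the maximum over $i$ and dividing by $\lambda^*$ gives
\[
\left\|\frac{Mu^*}{\lambda^*}-u^*\right\|_\infty\le \frac{8\,\mathcal R_n(u^*)}{\lambda^*}.
\]

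Finally, we invoke \eqref{eq:Rn_u_star_over_lam} from Proposition~\ref{prop:row_radius_normalized}. It gives $\mathcal R_n(u^*)/\lambda^*=o(\|u^*\|_\infty)$ with a deterministic $o(1)$, because $\|u^*\|_2=1$ and $\|u^*\|_\infty=k^{-1/2}\le 1$. Multiplying by the constant $8$ preserves this, so the claim holds for all sufficiently large $n$.

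No step is a real obstacle: the substantive inputs are Lemma~\ref{lem:lamstar_scale} and the Bernstein-type bound behind $E_2(u^*)$, and both are already established. The only point to check is that the $o(1)$ in Proposition~\ref{prop:row_radius_normalized} is deterministic and does not depend on the realization. It depends only on $n,k,p,d$, so the bound holds uniformly on $E_2(u^*)$.
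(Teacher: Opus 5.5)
Your proposal is correct and is essentially the paper's own proof. Like the paper, you use $M^*u^*=\lambda^*u^*$ to reduce to $(M-M^*)u^*/\lambda^*$, bound each row by $8\,\mathcal R_n(u^*)$ on $E_2(u^*)$, and then apply Proposition~\ref{prop:row_radius_normalized}; your remark that $\|u^*\|_\infty\le 1$ makes explicit how the $\|v\|_\infty$ term is absorbed, which the paper leaves implicit.
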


\begin{proof}
Since \(M^*u^*=\lambda^*u^*\),
\[
\frac{Mu^*}{\lambda^*}-u^*
=
\frac{(M-M^*)u^*}{\lambda^*}.
\]
On \(E_2\left(u^*\right)\), by Definition~\ref{def:event_E2},
\[
\left\|\frac{Mu^*}{\lambda^*}-u^*\right\|_\infty
\wle
\frac{8}{\lambda^*}\mathcal R_n(u^*).
\]
Using Proposition~\ref{prop:row_radius_normalized} concludes the proof.
\end{proof}
\label{sec:recovery_conclusion}

Using Lemma \ref{lem:one-shot} and \ref{lem:pivot-Mu-star} we now prove Theorem \ref{thm:recovery_sim}.
\begin{proof}[Proof of Theorem \ref{thm:recovery_sim}]
    Define the good event
\[
G \coloneqq
E_1
\cap E_2\left(u^*\right)
\cap E_2\left(\{u^{(-m)}-u^*\}_{m=1}^n\right)
\cap E_{\mathrm{op}}
\cap E_{\mathrm{vec}}.
\]
By a union bound together with Claims~\ref{clm:PSE1},~\ref{clm:Wm_op_uniform_lkc},~\ref{clm:Wm_um_uniform_pub}, \ref{clm:row_bernstein_u_star},~\ref{clm:PS_E2_u_m}, we obtain
\begin{equation}
\P_S(G)\ge 1-10n^{-10}.
\label{eq:PSG_union_bound}
\end{equation}
By Lemma~\ref{lem:one-shot} and Lemma~\ref{lem:pivot-Mu-star}, there exists a deterministic sequence
\(\varepsilon_n = o(1)\) such that on \(G\),
\[
\left\|\frac{Mu^*}{\lambda^*}-u^*\right\|_\infty 
+ \left\|u-\frac{Mu^*}{\lambda^*}\right\|_\infty
\le \frac{\varepsilon_n}{\sqrt{k}}.
\]
Therefore
\[
\P_S\left(\left\|\frac{Mu^*}{\lambda^*}-u^*\right\|_\infty 
+ \left\|u-\frac{Mu^*}{\lambda^*}\right\|_\infty \le \frac{\varepsilon_n}{\sqrt{k}}\right)\to1.
\]
Lemma~\ref{lem:suff_cond1} now yields exact recovery and completes the proof of Theorem~\ref{thm:recovery_sim}.
\end{proof}

\begin{proof}[Proof of Corollary~\ref{cor:pn_extensions_recovery}]
Interpret \eqref{eq:recovery_thm_scaling} with \(p\) replaced by \(p_n\), i.e.
\[
k\gg
\left(\frac{p_n}{1-p_n}\right)^{\frac{1}{2(d-1)}}\sqrt n.
\]
Since \(\sup_n p_n<1\) and
\[
p_n=\Omega\left(n^{-(d-1)}\log^{c_d} n\right)
\]
with \(c_d\ge \dfrac{4(d-1)}{2d-3}\), we have \(k\to\infty\). The assumption \eqref{eq:alpha_cond} is satisfied, so Corollary~\ref{cor:lkc_mod} holds and the probability bounds for \(E_1\), \(E_2\), \(E_{\mathrm{op}}\), and \(E_{\mathrm{vec}}\) remain valid verbatim with \(p_n\) in place of \(p\).

Moreover, by the \(p_n\)-extension in Lemma~\ref{lem:lamstar_scale}, the scale relations in \eqref{eq:lamstar_scale} continue to hold with \(p_n\) in place of \(p\). Every subsequent estimate in the proof of Theorem~\ref{thm:recovery_sim} uses \(p\) only through those scale relations, and therefore goes through without further modification. Hence the conclusion of Theorem~\ref{thm:recovery_sim} remains valid with \(p=p_n\).
\end{proof}
\addcontentsline{toc}{section}{References}

\bibliography{refs_hidden_clique.bib}
\bibliographystyle{abbrv}

\newpage
\appendix

\section{Perturbation and concentration}
\label{sec:concentration_inequalities}

This appendix collects standard inequalities and auxiliary perturbation/concentration results used throughout the paper.

\subsection{Basic inequalities}

\paragraph{Cauchy--Schwarz inequality.}
For $x,y\in\R^n$,
\begin{equation}
|\langle x,y\rangle|
\;\le\;
\|x\|_2\,\|y\|_2 .
\label{ineq:cs}
\end{equation}

\paragraph{Weyl's inequality.}
For a symmetric matrix \(C\in\R^{n\times n}\), let
\[
\lambda_1(C)\ge \cdots \ge \lambda_n(C)
\]
denote its eigenvalues listed in nonincreasing order. Then, for any two symmetric matrices
\(A,B\in\R^{n\times n}\),
\begin{equation}
\max_{i\in[n]} \left|\lambda_i(A)-\lambda_i(B)\right|
\le
\|A-B\|.
\label{ineq:weyl}
\end{equation}
\subsection{Bernstein inequalities}

Next, we record scalar and matrix versions of Bernstein's inequality, together with a simple parameter inequality used repeatedly to instantiate Bernstein's bound.

\begin{lemma}[Scalar Bernstein inequality {\cite[Thm.~2.8.4]{vershynin2018high}}]
\label{lem:scalar-bernstein}
Let $X_1,\dots,X_N$ be independent real-valued random variables with $\E[X_i]=0$ and
$|X_i|\le K$ almost surely for all $i$. Define the variance proxy
\[
\sigma^2 \;\coloneqq\; \sum_{i=1}^N \E[X_i^2].
\]
Then, for every $t\ge 0$,
\[
\P\left(\left|\sum_{i=1}^N X_i\right|\ge t\right)
\wle
2\exp\left(
-\frac{t^2}{2\sigma^2+\frac{2}{3}Kt}
\right).
\]
\end{lemma}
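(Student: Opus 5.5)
The plan is the standard Chernoff/moment-generating-function argument. Write $S_N=\sum_{i=1}^N X_i$. By symmetry it suffices to prove the one-sided bound
\[
\P(S_N\ge t)\le \exp\left(-\frac{t^2}{2\sigma^2+\frac23 Kt}\right),
\]
since applying it to $-X_1,\dots,-X_N$ handles the lower tail, and a union bound then gives the factor $2$. For the one-sided bound, for any $\theta>0$ Markov's inequality and independence give
\[
\P(S_N\ge t)\le e^{-\theta t}\prod_{i=1}^N \E\left[e^{\theta X_i}\right].
\]

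The core step is a bound on each moment generating function using only $\E[X_i]=0$ and $|X_i|\le K$. Expanding the exponential, for every $q\ge2$ we have $\E[X_i^q]\le K^{q-2}\E[X_i^2]$. Combining this with $q!\ge 2\cdot 3^{q-2}$ yields, for $0<\theta<3/K$,
\[
\E\left[e^{\theta X_i}\right]\le 1+\frac{\theta^2\E[X_i^2]}{2}\sum_{q\ge2}\left(\frac{\theta K}{3}\right)^{q-2}
=1+\frac{\theta^2\E[X_i^2]/2}{1-\theta K/3}.
\]
Using $1+x\le e^x$ and multiplying over $i$, this becomes
\[
\P(S_N\ge t)\le \exp\left(-\theta t+\frac{\theta^2\sigma^2/2}{1-\theta K/3}\right).
\]

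The final step, and the only one needing care, is choosing $\theta$. I will take
\[
\theta=\frac{t}{\sigma^2+Kt/3},
\]
which automatically satisfies $\theta K/3<1$. With this choice, $1-\theta K/3=\sigma^2/(\sigma^2+Kt/3)$, so the quadratic term equals
\[
\frac{t^2}{2(\sigma^2+Kt/3)},
\]
and the exponent is
\[
-\frac{t^2}{\sigma^2+Kt/3}+\frac{t^2}{2(\sigma^2+Kt/3)}=-\frac{t^2}{2\sigma^2+\frac23Kt}.
\]
This is exactly the claimed one-sided bound, so the lemma follows.

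The case $t=0$ is trivial, since then the right-hand side equals $2\ge1$. The case $\sigma^2=0$ is also trivial, since then every $X_i=0$ almost surely and the left-hand side vanishes for $t>0$. The only real obstacle is the moment-series inequality $q!\ge2\cdot3^{q-2}$, which follows by a one-line induction.
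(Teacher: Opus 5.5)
Your proof is correct. The paper gives no argument of its own for this lemma and simply cites Vershynin's Theorem~2.8.4, whose standard proof is exactly your Chernoff bound with the moment-generating-function estimate $\E[e^{\theta X_i}]\le \exp\bigl(\frac{\theta^2\E[X_i^2]/2}{1-\theta K/3}\bigr)$ and the choice $\theta=t/(\sigma^2+Kt/3)$. The only caveat is that $\theta K/3<1$ requires $\sigma^2>0$, and you already handle the degenerate case $\sigma^2=0$ separately.
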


\begin{lemma}[Matrix Bernstein, rectangular case {\cite[Theorem~1.6]{tropp2012user}}]
\label{lem:tropp-bernstein}
Let $\{Z_k\}$ be a finite collection of independent random matrices in
$\R^{d_1\times d_2}$ such that $\E[Z_k]=0$ for all $k$ and
\[
\|Z_k\|\le R \qquad \text{almost surely.}
\]
Define the (rectangular) variance parameter
\[
\sigma^2
\;\coloneqq\;
\max\left\{
\left\|\sum_k \E[Z_k Z_k^\top]\right\|,
\;\;
\left\|\sum_k \E[Z_k^\top Z_k]\right\|
\right\}.
\]
Then, for every $t\ge 0$,
\[
\P\left(\left\|\sum_k Z_k\right\|\ge t\right)
\wle
(d_1+d_2) 
\exp\left(
-\frac{t^2}{2\sigma^2+\frac{2}{3}Rt}
\right).
\]
\end{lemma}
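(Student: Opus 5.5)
The statement is the standard rectangular matrix Bernstein inequality of \cite[Theorem~1.6]{tropp2012user}. The plan is to reproduce Tropp's argument in three stages: reduce to the symmetric case by Hermitian dilation, prove a symmetric Bernstein bound via the matrix Laplace transform, then optimize the exponent.

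\emph{Dilation.} For $Z\in\R^{d_1\times d_2}$ set
\[
\mathcal D(Z)\coloneqq\begin{bmatrix}0 & Z\\ Z^\top & 0\end{bmatrix}\in\R^{(d_1+d_2)\times(d_1+d_2)}.
\]
This map is linear and $\mathcal D(Z)$ is symmetric. Its spectrum consists of $\pm$ the singular values of $Z$ together with zeros. Hence $\|\sum_k Z_k\|=\lambda_{\max}\bigl(\sum_k\mathcal D(Z_k)\bigr)$ and $\|\mathcal D(Z_k)\|\le R$. Moreover $\mathcal D(Z)^2=\mathrm{diag}(ZZ^\top,Z^\top Z)$, so
\[
\Bigl\|\sum_k\E[\mathcal D(Z_k)^2]\Bigr\|=\sigma^2 .
\]
It therefore suffices to show the following for independent, centered, symmetric $X_k$ of dimension $D=d_1+d_2$ with $\lambda_{\max}(X_k)\le R$ and $\|\sum_k\E X_k^2\|\le\sigma^2$:
\[
\P\Bigl(\lambda_{\max}\Bigl(\sum_k X_k\Bigr)\ge t\Bigr)\le D\exp\Bigl(-\tfrac{t^2}{2\sigma^2+\frac23Rt}\Bigr).
\]

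\emph{Laplace transform and cumulant subadditivity.} For $\theta>0$, Markov's inequality combined with $e^{\theta\lambda_{\max}(Y)}\le\tr e^{\theta Y}$ gives
\[
\P(\lambda_{\max}(Y)\ge t)\le e^{-\theta t}\,\E\tr e^{\theta Y},\qquad Y=\sum_k X_k .
\]
The key step, and the main obstacle, is to bound the trace exponential of a sum of non-commuting independent matrices. I would use Lieb's concavity theorem: the map $A\mapsto\tr\exp(H+\log A)$ is concave on positive definite matrices. Applying it with Jensen's inequality one summand at a time, conditioning on the others, yields
\[
\E\tr e^{\theta Y}\le\tr\exp\Bigl(\sum_k\log\E e^{\theta X_k}\Bigr).
\]
This is the only genuinely non-elementary input; everything after it is a scalar-to-matrix transfer.

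\emph{Per-summand mgf bound and optimization.} Set $g(\theta)=\dfrac{\theta^2/2}{1-R\theta/3}$ for $0<\theta<3/R$. The scalar inequality $e^{x}\le 1+x+\dfrac{x^2/2}{1-x/3}$ for $x<3$, applied at $x=\theta\lambda$ with $\lambda\le R$, transfers through the spectral calculus to
\[
e^{\theta X}\preceq I+\theta X+g(\theta)X^2 .
\]
Taking expectations and using $\E X=0$ gives $\E e^{\theta X}\preceq I+g(\theta)\E X^2\preceq\exp(g(\theta)\E X^2)$. The logarithm is operator monotone and $\tr\exp$ is monotone, so
\[
\E\tr e^{\theta Y}\le\tr\exp\Bigl(g(\theta)\sum_k\E X_k^2\Bigr)\le D\,e^{g(\theta)\sigma^2}.
\]
Finally take $\theta=t/(\sigma^2+Rt/3)$, which lies in $(0,3/R)$. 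Then $1-R\theta/3=\sigma^2/(\sigma^2+Rt/3)$, so $g(\theta)\sigma^2=\theta t/2$ and
\[
-\theta t+g(\theta)\sigma^2=-\frac{\theta t}{2}=-\frac{t^2/2}{\sigma^2+Rt/3}=-\frac{t^2}{2\sigma^2+\frac23Rt}.
\]
This gives the stated bound with the prefactor $d_1+d_2$.
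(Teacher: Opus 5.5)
Your proposal is correct and is essentially Tropp's own proof of \cite[Theorem~1.6]{tropp2012user}: Hermitian dilation, Lieb-based subadditivity of matrix cumulant generating functions, the Bernstein mgf bound, and the choice $\theta=t/(\sigma^2+Rt/3)$; the paper itself quotes this result without proof. Two small points should be made explicit: the scalar inequality $e^x\le 1+x+\frac{x^2/2}{1-x/3}$ for negative $x$ needs a line of justification (e.g.\ $(3-x)(1+x)+\frac{3}{2}x^2-(3-x)e^x$ is convex with a double zero at $0$), and when $\sigma^2=0$ your $\theta$ equals $3/R$ rather than lying in $(0,3/R)$, though then every $Z_k=0$ almost surely and the bound is trivial.
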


\begin{lemma}[Bernstein parameter inequality]\label{lem:bernstein_parameter_ineq}
Fix $\ell>0$ and let $V\ge 0$ and $U\ge 0$. Define
\[
t \;\coloneqq\; \sqrt{2V\ell}+\frac{2}{3}U\ell.
\]
Then
\begin{equation}\label{eq:bernstein_parameter_ineq_general}
t^2 \;\ge\; \ell\left(2V+\frac{2}{3}Ut\right).
\end{equation}
\end{lemma}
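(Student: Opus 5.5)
We need to prove: with $t=\sqrt{2V\ell}+\tfrac23 U\ell$, we have $t^2\ge \ell(2V+\tfrac23 Ut)$. The plan is to expand both sides and compare term by term; no earlier result is needed.

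Write $a\coloneqq\sqrt{2V\ell}\ge0$ and $b\coloneqq \tfrac23 U\ell\ge 0$, so that $t=a+b$ and $a^2=2V\ell$. On the left,
\[
t^2=a^2+2ab+b^2 .
\]
On the right,
\[
\ell\left(2V+\tfrac23 Ut\right)=a^2+\tfrac23 U\ell\,(a+b)=a^2+ab+b^2 .
\]
Here the second equality uses $\tfrac23 U\ell=b$.

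Subtracting gives
\[
t^2-\ell\left(2V+\tfrac23 Ut\right)=ab .
\]
This is nonnegative because $V,U\ge0$ and $\ell>0$ make both $a$ and $b$ nonnegative. That is exactly \eqref{eq:bernstein_parameter_ineq_general}.

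The only point needing care is the identification $\tfrac23 U\ell\,t=b(a+b)$, which keeps the cross terms aligned. There is no real obstacle.

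Equivalently, the inequality says that $t$ is at least the positive root of the quadratic $x^2-\tfrac23 U\ell\,x-2V\ell=0$. The direct expansion above avoids solving that quadratic.
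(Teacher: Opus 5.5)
Your proof is correct and is essentially the paper's argument. Both set $a=\sqrt{2V\ell}$ and $b=\frac{2}{3}U\ell$, expand $t^2=(a+b)^2$ and $\ell(2V+\frac{2}{3}Ut)$, and find that the difference equals $ab=\frac{2}{3}U\ell\sqrt{2V\ell}\ge 0$.
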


\begin{proof}
Write $t=a+b$ with $a\coloneqq \sqrt{2V\ell}$ and $b\coloneqq \frac{2}{3}U\ell$. Then
\[
t^2\weq (a+b)^2
\weq a^2+2ab+b^2
\weq 2V\ell+\frac{4}{3}U\ell\,\sqrt{2V\ell}+\frac{4}{9}U^2\ell^2.
\]
On the other hand,
\[
\ell\left(2V+\frac{2}{3}Ut\right)
\weq 2V\ell+\frac{2}{3}U\ell(a+b)
\weq 2V\ell+\frac{2}{3}U\ell\,\sqrt{2V\ell}+\frac{4}{9}U^2\ell^2.
\]
Subtracting gives
\[
t^2-\ell\left(2V+\frac{2}{3}Ut\right)
=
\left(\frac{4}{3}-\frac{2}{3}\right)U\ell\,\sqrt{2V\ell}
=
\frac{2}{3}U\ell\,\sqrt{2V\ell}
\;\ge\;0,
\]
which proves \eqref{eq:bernstein_parameter_ineq_general}.
\end{proof}

\subsection{Concentration of the adjacency matrix}

For \(e\in\binom{[n]}{d}\), let \(B_e\in\R^{n\times n}\) consider the co-membership matrix
\[
(B_e)_{ij} = \1\{i\neq j,\ \{i,j\}\subset e\}.
\]
Fix \(d\ge 2\). Let \(\{X_e\}_{e\in\binom{[n]}{d}}\) be independent random variables satisfying \(0\le X_e\le 1\) and \(\E[X_e]\le p_n\) for all \(e\in\binom{[n]}{d}\). Define
\[
A(\mathcal I)\coloneqq \sum_{e\in\mathcal I} X_e\,B_e,
\]
for any deterministic index set \(\mathcal I\subseteq \binom{[n]}{d}\), and write \(\E\) for expectation under the joint law of \(\{X_e\}_{e\in\binom{[n]}{d}}\). \cite[Theorem~4]{lee_kim_chung_2020} controls
\[
\|A(\cE)-\E[A(\cE)]\|,
\qquad
\cE = \binom{[n]}{d},
\]
and its proof uses only independence, boundedness, the uniform mean bound
\(\E[X_e]\le p_n\), and the overlap count
\(|\{e\in\cE:\{i,j\}\subset e\}|=\binom{n-2}{d-2}\).
For later applications, we need the same conclusion for deterministic
subsets \(\mathcal I\subseteq\cE\). This is achieved by applying the theorem to the masked family
\(\tX_e\coloneqq X_e\,\1\{e\in\mathcal I\}\) on the full index set \(\cE\).

\begin{lemma}[Adapted from \cite{lee_kim_chung_2020}, Theorem~4]\label{lem:lkc_mod}
Fix \(d\ge 2\) and \(\alpha>0\), and assume
\begin{equation}\label{eq:alpha_cond}
p_n \;\ge\; \alpha\,\frac{\log n}{n\binom{n-2}{d-2}}.
\end{equation}
Then there exists a constant \(C_{\alpha,d}>0\), depending only on \((\alpha,d)\), such that for every
deterministic index set \(\mathcal I\subseteq \binom{[n]}{d}\),
\begin{equation}\label{eq:lkc_general_index}
\left\|A(\mathcal I)-\E[A(\mathcal I)]\right\|
\;\le\;
C_{\alpha,d}\,\sqrt{n\binom{n-2}{d-2}\,p_n}
\end{equation}
with probability at least \(1-4n^{-11}\).
\end{lemma}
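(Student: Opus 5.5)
The plan is to reduce Lemma~\ref{lem:lkc_mod} to the original spectral bound of \cite[Theorem~4]{lee_kim_chung_2020} by a masking argument, so that no new random matrix analysis is needed. Fix a deterministic $\mathcal I\subseteq\cE=\binom{[n]}{d}$ and define $\tX_e\coloneqq X_e\,\1\{e\in\mathcal I\}$ for every $e\in\cE$. Then $\sum_{e\in\cE}\tX_e B_e=A(\mathcal I)$ identically, and hence $\E[\sum_e \tX_e B_e]=\E[A(\mathcal I)]$. The whole task is therefore to check that the family $\{\tX_e\}_{e\in\cE}$ satisfies every hypothesis used in the proof of \cite[Theorem~4]{lee_kim_chung_2020}.

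The checks are short. Independence holds because each $\tX_e$ is a deterministic function of $X_e$ alone. Boundedness $0\le \tX_e\le 1$ is inherited from $X_e$. The mean bound $\E[\tX_e]\le \E[X_e]\le p_n$ holds since the mask is at most one. The combinatorial input, that the number of $d$-sets containing a fixed pair $\{i,j\}$ is $\binom{n-2}{d-2}$, refers only to the index set $\cE$, which is unchanged by masking. The sparsity assumption \eqref{eq:alpha_cond} is also the same. So the theorem applies verbatim to $\{\tX_e\}$ and yields \eqref{eq:lkc_general_index} with failure probability at most $4n^{-11}$ (after choosing the theorem's tail parameter to give exponent $11$). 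The constant $C_{\alpha,d}$ depends only on $(\alpha,d)$ and not on $\mathcal I$.

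The main obstacle is that \cite[Theorem~4]{lee_kim_chung_2020} may be stated with the exact mean equal to $p_n$, or with i.i.d.\ entries, rather than with an upper bound on the means. I would handle this by re-reading their proof and confirming that $p_n$ enters only as an upper bound. Concretely, it enters through the variance proxy $\sum_{e\ni i,j}\Var(\tX_e)\le \binom{n-2}{d-2}p_n$ in the light-pair and Bernstein/Chernoff steps, and through the degree bound $\E[\text{row sum}]\le (d-1)\binom{n-1}{d-1}p_n\lesssim n\binom{n-2}{d-2}p_n$ in the heavy-pair (discrepancy) part of the Feige--Ofek style argument. Both quantities are monotone in the means, so the bounds persist when some means are reduced to $0$.

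If the theorem turns out to require identically distributed entries, I would instead use a symmetrization-free fallback. Write $A(\mathcal I)-\E A(\mathcal I)$ as a sum of independent centered matrices $(\tX_e-\E\tX_e)B_e$ and rerun the standard Feige--Ofek decomposition (light pairs via an $\varepsilon$-net plus Bernstein, heavy pairs via a bounded-degree discrepancy property), tracking only the upper bounds above. The failure probability $4n^{-11}$ would come from the net and discrepancy steps with constants tuned by $\alpha$.
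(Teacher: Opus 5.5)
Your proposal is correct and follows the paper's proof essentially verbatim: you mask with $\tX_e = X_e\1\{e\in\mathcal I\}$, check independence, $0\le\tX_e\le 1$, $\E[\tX_e]\le p_n$ and the pair-overlap count $\binom{n-2}{d-2}$, note that $\sum_{e}\tX_e B_e = A(\mathcal I)$, and invoke \cite[Theorem~4]{lee_kim_chung_2020}. Your caveat that the reference should use the mean only as an upper bound is the same assumption the paper makes (it states that the reference's proof uses only independence, boundedness, the uniform mean bound and the overlap count), so the fallback rerun of the Feige--Ofek argument is not needed.
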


\begin{proof}
Fix deterministic \(\mathcal I\subseteq \binom{[n]}{d}\) and consider
\[
\tX_e = X_e\,\1\{e\in\mathcal I\},
\qquad
\tA(\mathcal I) = \sum_{e\in\cE}\tX_e\,B_e .
\]
Since \(\1\{e\in\mathcal I\}\) is deterministic and \(\{X_e\}_{e\in\cE}\) are independent,
the family \(\{\tX_e\}_{e\in\cE}\) is independent. Moreover \(0\le \tX_e\le 1\) and
\[
\E[\tX_e]=\E[X_e]\1\{e\in\mathcal I\}\le p_n
\qquad(e\in\cE).
\]
Thus the boundedness and mean assumptions used in \cite[Theorem~4]{lee_kim_chung_2020}
remain valid under masking. For each \(i\neq j\),
\[
\E\left[\sum_{e\in\cE:\,\{i,j\}\subset e}\tX_e\right]
=
\sum_{e\in\cE:\,\{i,j\}\subset e}\E[X_e]\1\{e\in\mathcal I\}
\le
p_n |\{e\in\cE:\{i,j\}\subset e\}|
=
p_n\binom{n-2}{d-2},
\]
so the same pair-overlap bound used in the reference also holds. Finally,
\[
\tA(\mathcal I)=\sum_{e\in\mathcal I}X_eB_e=A(\mathcal I),
\qquad
\E[\tA(\mathcal I)]=\E[A(\mathcal I)].
\]
Hence the masked family \(\{\tX_e\}_{e\in\cE}\) satisfies the hypotheses used in
\cite[Theorem~4]{lee_kim_chung_2020} on the full index set \(\cE\), and applying that theorem yields
\eqref{eq:lkc_general_index}.
\end{proof}

\paragraph{Planted model and adjacency matrix.}
Fix \(S\subset[n]\) and work under \(\P_S(\cdot)=\P(\cdot\mid S)\).
Let \(H\) be a \(d\)-uniform hypergraph with \(H_e=1\) for \(e\subset S\) and independent \(H_e\sim\Ber(p_n)\) for \(e\not\subset S\).
Define the adjacency matrix \(A\in\R^{n\times n}\) as in \eqref{eq:def_adjacency_matrix}
and write \(\E_S\) for expectation under \(\P_S\). Then
\[
A=\sum_{e\in\binom{[n]}{d}} H_e B_e.
\]
Split hyperedges into
\[
\Ein = \{e:e\subset S\},
\qquad
\Eout = \binom{[n]}{d}\setminus \Ein,
\]
so
\[
\E_S[A]=\sum_{e\in\Ein} B_e+\sum_{e\in\Eout} p_n B_e,
\qquad
A-\E_S[A]=\sum_{e\in\Eout}(H_e-p_n)\,B_e.
\]

\begin{corollary}\label{cor:lkc_mod}
Assume \eqref{eq:alpha_cond}. With \(\P_S\)-probability at least \(1-4n^{-11}\),
\[
\|A-\E_S[A]\|
\;\le\;
C_{\alpha,d}\,\sqrt{n\binom{n-2}{d-2}\,p_n}.
\]
\end{corollary}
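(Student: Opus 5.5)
This follows from Lemma~\ref{lem:lkc_mod}, applied to the hyperedge family that actually fluctuates under \(\P_S\). The planted part \(\sum_{e\in\Ein}B_e\) is deterministic, so
\[
A-\E_S[A]=\sum_{e\in\Eout}(H_e-p_n)B_e = A(\Eout)-\E_S[A(\Eout)],
\]
where \(A(\mathcal I)=\sum_{e\in\mathcal I}X_eB_e\) is as in the appendix.

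The plan is to take \(X_e\coloneqq H_e\) for \(e\in\Eout\). To obtain an independent family indexed by all of \(\binom{[n]}{d}\), set \(X_e\) equal to an auxiliary independent \(\Ber(p_n)\) variable, or simply \(0\), for \(e\in\Ein\); these coordinates never enter \(A(\Eout)\).

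Next I check the hypotheses of Lemma~\ref{lem:lkc_mod}. Under \(\P_S\), once \(S\) is fixed, \(\Eout\) is a deterministic index set. The variables \(\{X_e\}\) are independent, satisfy \(0\le X_e\le 1\), and have \(\E[X_e]\le p_n\). Condition \eqref{eq:alpha_cond} is assumed. The lemma then gives
\[
\|A(\Eout)-\E[A(\Eout)]\|\le C_{\alpha,d}\sqrt{n\tbinom{n-2}{d-2}p_n}
\]
with probability at least \(1-4n^{-11}\). Since \(\E[A(\Eout)]=\E_S[A(\Eout)]\), this is exactly the claimed bound.

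The only point needing care is that Lemma~\ref{lem:lkc_mod} requires a deterministic index set. Working conditionally on \(S\) makes \(\Eout\) deterministic, and the masking argument in the lemma's proof handles the restriction to \(\Eout\). The cases \(S=\emptyset\) (so \(\Eout=\binom{[n]}{d}\), the null model used in Lemma~\ref{lem:type_I_error}) and general \(|S|=k\) are covered by the same argument. I expect no real obstacle here.
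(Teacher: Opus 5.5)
Your proposal is correct and follows the paper's proof. Under $\P_S$ the planted part is deterministic, so $A-\E_S[A]=A(\Eout)-\E_S[A(\Eout)]$, and Lemma~\ref{lem:lkc_mod} is applied with the deterministic index set $\mathcal I=\Eout$. You also set $X_e=0$ on the planted hyperedges $e\subset S$, which is needed because $H_e=1$ there would violate $\E[X_e]\le p_n$. The paper leaves this step implicit in its masking argument, so your version is slightly more careful.
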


\begin{proof}
Under \(\P_S\), the set \(\Eout\) is deterministic, the family \(\{H_e\}_{e\in\Eout}\) is independent \(\Ber(p_n)\), and
\[
A-\E_S[A]=\sum_{e\in\Eout}(H_e-p_n)\,B_e
= A(\Eout)-\E_S[A(\Eout)].
\]
Applying Lemma~\ref{lem:lkc_mod} with \(\mathcal I=\Eout\) gives the claim.
\end{proof}


\subsection{Davis--Kahan-type perturbation bounds}
\begin{lemma}[Residual-form Davis--Kahan $\sin\theta$ bound; cf. {\cite[Theorem~3]{deng2021strong}}]
\label{lem:DK_residual_subspace}
Let $M\in\R^{n\times n}$ be symmetric and admit an orthogonal eigendecomposition
\[
M = X\Lambda X^\top,\qquad X=[X_1\ \ X_2],\qquad \Lambda=\diag(\Lambda_1,\Lambda_2).
\]
Fix $\hat\lambda\in\R$ and a unit vector $\hat u\in\R^n$, and define
\[
\delta(\hat\lambda)\;\coloneqq\;\min_{\mu\in\spec(\Lambda_2)}|\mu-\hat\lambda|,
\qquad\text{assume }\delta(\hat\lambda)>0,
\]
where $\spec(\Lambda_2)$ denotes the set of diagonal entries of $\Lambda_2$.
Let $\theta\in[0,\pi/2]$ be defined by
\(
\sin\theta \coloneqq \|X_2^\top \hat u\|_2.
\)
Then
\begin{equation}
\label{eq:DK_residual_subspace}
\sin\theta
\wle 
\frac{\|(M-\hat\lambda I)\hat u\|_2}{\delta(\hat\lambda)}.
\end{equation}
\end{lemma}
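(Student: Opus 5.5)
The plan is to prove the residual bound by expanding $\hat u$ in the eigenbasis of $M$ and exploiting the spectral gap on the block $\Lambda_2$. Since $X$ is orthogonal, write
\[
(M-\hat\lambda I)\hat u = X(\Lambda-\hat\lambda I)X^\top \hat u,
\]
so that
\[
\|(M-\hat\lambda I)\hat u\|_2 = \|(\Lambda-\hat\lambda I)X^\top\hat u\|_2.
\]
Because $\Lambda=\diag(\Lambda_1,\Lambda_2)$ is block diagonal, the vector $(\Lambda-\hat\lambda I)X^\top\hat u$ splits into the two components $(\Lambda_1-\hat\lambda I)X_1^\top\hat u$ and $(\Lambda_2-\hat\lambda I)X_2^\top\hat u$. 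Its squared norm is the sum of the squared norms of these two blocks, and dropping the first block gives
\[
\|(M-\hat\lambda I)\hat u\|_2 \ge \|(\Lambda_2-\hat\lambda I)X_2^\top \hat u\|_2.
\]

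The key step is a lower bound for this remaining block. The matrix $\Lambda_2-\hat\lambda I$ is diagonal with entries $\mu-\hat\lambda$ for $\mu\in\spec(\Lambda_2)$, and each such entry has absolute value at least $\delta(\hat\lambda)$. Hence, coordinatewise,
\[
\|(\Lambda_2-\hat\lambda I)y\|_2^2=\sum_\mu(\mu-\hat\lambda)^2y_\mu^2\ge \delta(\hat\lambda)^2\|y\|_2^2
\]
for every vector $y$. Taking $y=X_2^\top\hat u$ and using the definition $\sin\theta=\|X_2^\top\hat u\|_2$, we get
\[
\|(M-\hat\lambda I)\hat u\|_2\ge \delta(\hat\lambda)\sin\theta.
\]
Dividing by $\delta(\hat\lambda)>0$ then yields \eqref{eq:DK_residual_subspace}.

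No step here is a genuine obstacle; the argument is a direct computation. The only points that need care are bookkeeping ones. First, the spectrum of $\Lambda_2$ should be taken with multiplicity, as its diagonal entries. Second, the eigendecomposition must be orthogonal, so that $X^\top$ preserves norms. Third, the statement requires neither that $\hat\lambda$ be an eigenvalue nor that $\hat u$ be an eigenvector, so the proof should not assume either. Note also that $\sin\theta\le1$ holds automatically, since $\hat u$ is a unit vector and $X_2$ has orthonormal columns.
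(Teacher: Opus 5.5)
Your proof is correct and complete. The paper itself gives no proof of this lemma. It quotes it (citing Theorem~3 of Deng et al.) as an instance of the classical residual-form Davis--Kahan bound. The general version of that bound concerns an approximate invariant pair $(\hat U,\hat\Lambda)$ in which $\hat U$ has several columns, and there one must control $X_2^\top\hat U$ through the Sylvester-type identity $X_2^\top(M\hat U-\hat U\hat\Lambda)=\Lambda_2X_2^\top\hat U-X_2^\top\hat U\hat\Lambda$.

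In the single-vector case this identity collapses to $X_2^\top(M-\hat\lambda I)\hat u=(\Lambda_2-\hat\lambda I)X_2^\top\hat u$, and the diagonal lower bound you use is all that remains. Your step of dropping the $X_1$-block after the orthogonal change of basis is the same computation written slightly differently.

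Your route buys self-containment. It makes transparent that neither $\hat u$ nor $\hat\lambda$ has to be an eigenpair of $M$. That is exactly how the lemma is used in Corollary~\ref{cor:DK_residual_rankone}, where $x=u$ is an eigenvector of $M$ but not of $M^{(-m)}$. If you replace the coordinatewise sum by the smallest singular value of the symmetric matrix $X_2^\top MX_2-\hat\lambda I$, your argument also works for any orthonormal basis $X_2$ of the invariant complement of $X_1$, not only an eigenbasis. This quietly covers the way $X_2$ is chosen in that corollary.

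The citation buys generality to higher-dimensional subspaces, which the paper never uses.
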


\begin{corollary}\label{cor:DK_residual_rankone}
Let $A\in\R^{n\times n}$ be symmetric with leading eigenvalue $\lambda_1(A)$ of multiplicity one and corresponding unit eigenvector $v$.
Fix $\hat\lambda\in\R$ and a unit vector $x\in\R^n$, and set
\[
\delta(\hat\lambda) = \min_{i\ge 2}|\lambda_i(A)-\hat\lambda|,
\qquad\text{assume }\delta(\hat\lambda)>0.
\]
Choose the sign of $v$ so that $\langle x,v\rangle\ge 0$. Then
\begin{equation}\label{eq:DK_residual_rankone}
\|x-v\|_2
\wle 
\frac{\sqrt2}{\delta(\hat\lambda)}\,\|(A-\hat\lambda I)x\|_2.
\end{equation}
\end{corollary}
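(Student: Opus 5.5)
The statement is Corollary~\ref{cor:DK_residual_rankone}. It is a rank-one specialization of Lemma~\ref{lem:DK_residual_subspace}, so I would reduce it to that lemma and then convert a $\sin\theta$ bound into a Euclidean distance between unit vectors.

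Apply Lemma~\ref{lem:DK_residual_subspace} with $M=A$, $X_1=v$, and $X_2$ an orthonormal basis of $v^\perp$ made of eigenvectors for $\lambda_2(A),\dots,\lambda_n(A)$. Such a basis exists because $A$ is symmetric and $\lambda_1(A)$ is simple. Then $\spec(\Lambda_2)=\{\lambda_i(A)\}_{i\ge2}$, so the gap $\delta(\hat\lambda)$ in the lemma is exactly the one in the corollary. Taking $\hat u=x$, the lemma gives
\[
\sin\theta=\|X_2^\top x\|_2\le \frac{\|(A-\hat\lambda I)x\|_2}{\delta(\hat\lambda)}.
\]
Since $x$ is a unit vector, $\|X_2^\top x\|_2^2=1-\langle x,v\rangle^2$, so $\cos\theta=|\langle x,v\rangle|=\langle x,v\rangle$ by the sign choice.

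Next I convert this to a distance. Because $\langle x,v\rangle=\cos\theta\ge0$,
\[
\|x-v\|_2^2=2-2\cos\theta\le 2-2\cos^2\theta=2\sin^2\theta ,
\]
where the inequality uses $0\le\cos\theta\le1$. Hence $\|x-v\|_2\le\sqrt2\sin\theta$, and combining with the first display gives \eqref{eq:DK_residual_rankone}.

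I do not expect any real obstacle. The only points needing care are that $X_2$ spans exactly $v^\perp$, which is where multiplicity one is used, and that the sign convention is what allows the step $1-\cos\theta\le 1-\cos^2\theta$.
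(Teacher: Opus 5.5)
Your proposal is correct and follows the paper's proof essentially verbatim. You apply Lemma~\ref{lem:DK_residual_subspace} with $X_1=v$ and $X_2$ an orthonormal basis of $v^\perp$, identify $\cos\theta=\langle x,v\rangle$ through the sign choice, and use $2-2\cos\theta\le 2\sin^2\theta$. Requiring the columns of $X_2$ to be eigenvectors, so that $\Lambda_2$ is genuinely diagonal, is slightly more careful than the paper's wording, though such a basis exists because $v^\perp$ is $A$-invariant and does not actually need $\lambda_1(A)$ to be simple.
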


\begin{proof}
Let $X_1\coloneqq v$ and choose $X_2\in\R^{n\times(n-1)}$ with orthonormal columns spanning $v^\perp$, the orthogonal complement of $v$, so that
$X\coloneqq [X_1\ \ X_2]$ is orthogonal and
\[
A=X\diag(\lambda_1(A),\Lambda_2)X^\top,
\qquad
\spec(\Lambda_2)=\{\lambda_i(A):i\ge2\}.
\]
Define $\theta\in[0,\pi/2]$ by $\sin\theta\coloneqq\|X_2^\top x\|_2$. Since $X_2X_2^\top=I-vv^\top$ and $\|x\|_2=1$,
\[
\sin^2\theta=\|X_2^\top x\|_2^2=x^\top X_2X_2^\top x=x^\top(I-vv^\top)x=1-\langle x,v\rangle^2.
\]
Since $\theta\in[0,\pi/2]$ and $\langle x,v\rangle\ge0$, we have $\cos\theta=\langle x,v\rangle \in [0,1]$, and hence
\begin{equation}
\|x-v\|_2^2
=2-2\langle x,v\rangle
=2-2\cos\theta
\le 2(1-\cos^2\theta)
=2\sin^2\theta,
\quad\text{so}\quad
\|x-v\|_2\le \sqrt2 \sin\theta.
\label{eq:l2_norm_sqrt2_sin}
\end{equation}
Applying Lemma~\ref{lem:DK_residual_subspace} with $(M,\hat u)=(A,x)$ yields
\begin{equation}
\sin\theta \wle \frac{\|(A-\hat\lambda I)x\|_2}{\delta(\hat\lambda)}.
\label{eq:sin_delta}
\end{equation}
Combining \eqref{eq:l2_norm_sqrt2_sin} and \eqref{eq:sin_delta} gives \eqref{eq:DK_residual_rankone}.
\end{proof}

\begin{lemma}[Davis--Kahan bound with sign alignment; cf. {\cite[Corollary~1]{yu2015useful}}]
\label{lem:DK_sign_aligned}
Let $M^*\in\R^{n\times n}$ be symmetric with eigenvalues
$\lambda_1(M^*)\ge \lambda_2(M^*)\ge \cdots \ge \lambda_n(M^*)$ and unit eigenvector $u^*$
associated with $\lambda_1(M^*)$. Assume the top eigenvalue has multiplicity one and define the eigengap
\[
\gamma^* \;\coloneqq\; \lambda_1(M^*)-\lambda_2(M^*) \;>\;0.
\]
Let $M\in\R^{n\times n}$ be symmetric with unit top eigenvector $u$ and suppose that
\[
\langle u,u^*\rangle \ge 0 .
\]
Then
\[
\|u-u^*\|_2 \wle \frac{2^{3/2}\,\|M-M^*\|}{\gamma^*}.
\]
\end{lemma}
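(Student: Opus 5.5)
Lemma~\ref{lem:DK_sign_aligned} is a standard Davis--Kahan consequence, and I will derive it from the $\sin\theta$ theorem in the form of \cite{yu2015useful}. The argument splits into three steps: compare the two eigenvectors, trade the sine for a Euclidean distance using the sign alignment, and handle the case where the perturbation is large.

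First, let $\theta\in[0,\pi/2]$ be the angle between the lines spanned by $u$ and $u^*$. Then $\cos\theta=|\langle u,u^*\rangle|$ and $\sin^2\theta=1-\langle u,u^*\rangle^2$. The Yu--Wang--Samworth variant of Davis--Kahan for the top eigenvector requires a gap assumption only on the population matrix $M^*$. It states that
\[
\sin\theta\le \frac{2\|M-M^*\|}{\lambda_1(M^*)-\lambda_2(M^*)}=\frac{2\|M-M^*\|}{\gamma^*}.
\]
If I do not take that corollary as given, I would rederive it as follows. Put $E=M-M^*$ and $P_\perp=I-u^*u^{*\top}$. From $Mu=\lambda_1(M)u$ I get $P_\perp(M^*-\lambda_1(M) I)u=-P_\perp Eu$. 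On the complement of $u^*$, the eigenvalues of $M^*$ are at most $\lambda_2(M^*)$. This gives $\|P_\perp E u\|\ge(\lambda_1(M)-\lambda_2(M^*))\sin\theta$. When $\|E\|\le\gamma^*/2$, Weyl's inequality \eqref{ineq:weyl} gives $\lambda_1(M)-\lambda_2(M^*)\ge\gamma^*-\|E\|\ge\gamma^*/2$, and hence $\sin\theta\le 2\|E\|/\gamma^*$.

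Second, I convert the sine bound into a distance bound using the sign alignment $\langle u,u^*\rangle\ge0$. Because the inner product is nonnegative, $\cos\theta=\langle u,u^*\rangle$. The same computation as in \eqref{eq:l2_norm_sqrt2_sin} then gives
\[
\|u-u^*\|_2^2=2-2\cos\theta\le 2(1-\cos^2\theta)=2\sin^2\theta .
\]
It follows that $\|u-u^*\|_2\le\sqrt2\sin\theta\le 2^{3/2}\|E\|/\gamma^*$, which is the claimed bound.

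Third, I deal with the remaining case $\|E\|>\gamma^*/2$, where the eigengap argument of the first step does not apply. Here the bound is trivial. The unit vectors $u,u^*$ have nonnegative inner product, so $\|u-u^*\|_2\le\sqrt2$. On the other hand, $2^{3/2}\|E\|/\gamma^*>\sqrt2$, so the inequality holds.

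The only delicate step is the first, namely obtaining the constant $2$ with a gap that refers to $M^*$ alone rather than to $M$. Splitting into the cases $\|E\|\le\gamma^*/2$ and $\|E\|>\gamma^*/2$ is what resolves it. Everything else is elementary, so I expect no real obstacle.
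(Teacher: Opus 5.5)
Your proof is correct. The paper gives no argument of its own for this lemma. It imports the result directly from \cite[Corollary~1]{yu2015useful}, whose statement already contains the sign-aligned $2^{3/2}$ bound. So your first step (citing the $\sin\theta$ bound) plus the conversion $\|u-u^*\|_2\le\sqrt2\sin\theta$ (the same computation as \eqref{eq:l2_norm_sqrt2_sin}) is exactly what that corollary packages. Once you cite it, your third step is unnecessary, because the cited $\sin\theta$ bound holds for every size of $\|E\|$.

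Your self-contained rederivation is also valid, but it is not the Yu--Wang--Samworth argument. They shift by the population eigenvalue, writing $(M^*-\lambda_1(M^*)I)u=-Eu+(\lambda_1(M)-\lambda_1(M^*))u$. They bound the right side by $2\|E\|$ via Weyl's inequality, and bound the component of the left side orthogonal to $u^*$ from below by $\gamma^*\sin\theta$. This gives $\sin\theta\le 2\|E\|/\gamma^*$ unconditionally, and the factor $2$ is the price of the eigenvalue mismatch.

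You shift by $\lambda_1(M)$ instead, so the residual is just $-Eu$, but the effective gap $\lambda_1(M)-\lambda_2(M^*)$ is random. That is what forces your split at $\|E\|=\gamma^*/2$ and the trivial $\sqrt2$ bound above it. In exchange you get the sharper $\sin\theta\le\|E\|/(\gamma^*-\|E\|)$ for small perturbations. That is the regime $\Delta_0,\Delta_m=o(1)$ in which the paper actually uses the lemma.

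One step you leave implicit: $P_\perp$ commutes with $M^*$, so $P_\perp(M^*-\lambda_1(M)I)u=(M^*-\lambda_1(M)I)P_\perp u$, with $P_\perp u$ lying in the invariant subspace $u^{*\perp}$ and $\|P_\perp u\|_2=\sin\theta$. This is immediate because $u^*$ is an eigenvector of the symmetric matrix $M^*$.
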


\subsection{Row-wise concentration tools}
Fix \(S\subset[n]\) and work under \(\P_S\). Recall that
\[
M = A-\E_0[A],
\qquad
M^* = \E_S[M].
\]
Let
\[
\cE= \binom{[n]}{d},
\qquad
\Ein= \{e\in\cE:e\subset S\},
\qquad
\Eout= \cE\setminus\Ein,
\]
and, for each \(i\in[n]\),
\[
\cE_i= \{e\in\cE:i\in e\},
\qquad
\Eout_i= \Eout\cap \cE_i.
\]

\begin{observation}\label{obs:row_decomp_direction}
Fix \(i\in[n]\) and \(v\in\R^n\). For any \(d\)-set \(e\) with \(i\in e\), define
\begin{equation}
a_e \coloneqq \sum_{j\in e\setminus\{i\}} v_j.
\label{eq:def_ae}
\end{equation}
For each \(e\in\Eout_i\), set
\[
Y_e \coloneqq (H_e-p)a_e,
\qquad
(M-M^*)_{i:}v=\sum_{e\in\Eout_i}Y_e .
\]
Under \(\P_S\), the variables \(\{Y_e\}_{e\in\Eout_i}\) are independent and centered. Moreover, for \(e\in\Eout_i\),
\begin{equation}\label{eq:row_envelope_short}
|Y_e|
=|H_e-p|\,|a_e|
\le |a_e|
\le \sum_{j\in e\setminus\{i\}}|v_j|
\le (d-1)\|v\|_\infty.
\end{equation}
Finally, with
\begin{equation}\label{eq:row_varproxy_short}
\sigma_i^2(v)= \sum_{e\in\Eout_i}\Var_S(Y_e),
\end{equation}
we have, since \(\Var_S(H_e)=p(1-p)\) for \(e\in\Eout\),
\[
\sigma_i^2(v)
=\sum_{e\in\Eout_i} a_e^2\Var_S(H_e)
=p(1-p)\sum_{e\in\Eout_i}a_e^2.
\]
\end{observation}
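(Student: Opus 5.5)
The plan is to start from the hyperedge representation of the centered matrix recorded just before Corollary~\ref{cor:lkc_mod}. Since \(M=A-\E_0[A]\) and \(M^*=\E_S[M]\), the matrix \(\E_0[A]\) cancels in the difference, and \(M-M^*=A-\E_S[A]\). Under \(\P_S\) this equals
\[
M-M^*=\sum_{e\in\Eout}(H_e-p)\,B_e,
\]
because the planted hyperedges \(e\in\Ein\) have \(H_e=1=\E_S[H_e]\) and therefore contribute nothing. Taking the \(i\)th row and applying it to a fixed \(v\) gives \((M-M^*)_{i:}v=\sum_{e\in\Eout}(H_e-p)(B_ev)_i\).

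Next I will reuse the coordinate formula from the proof of Lemma~\ref{lem:single_edge_action_bounds_pub}: \((B_ev)_i=0\) if \(i\notin e\), and \((B_ev)_i=\sum_{j\in e\setminus\{i\}}v_j=a_e\) if \(i\in e\). The sum therefore restricts to \(\Eout\cap\cE_i=\Eout_i\), which yields the identity \((M-M^*)_{i:}v=\sum_{e\in\Eout_i}Y_e\).

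Independence and centering follow because \(v\) (hence each \(a_e\)) is deterministic under \(\P_S\). The \(\{H_e\}_{e\in\Eout_i}\) are independent \(\Ber(p)\), so the \(Y_e=(H_e-p)a_e\) are independent with \(\E_S[Y_e]=a_e\,\E_S[H_e-p]=0\). For later applications where \(v\) is only \(\cF^{(-i)}\)-measurable, I would note that the same statement holds conditionally on \(\cF^{(-i)}\), since \(\Eout_i\subseteq\cE_i\).

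The envelope bound \eqref{eq:row_envelope_short} follows from three facts: \(|H_e-p|\le\max(p,1-p)\le1\), the triangle inequality on \(a_e\), and \(|e\setminus\{i\}|=d-1\). For the variance, \(\Var_S(Y_e)=a_e^2\Var_S(H_e)=a_e^2p(1-p)\), and summing over \(\Eout_i\) gives the formula for \(\sigma_i^2(v)\).

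There is no real obstacle here. The only point needing care is the cancellation of the planted hyperedges in \(M-M^*\), which is what makes the sum range over \(\Eout_i\) rather than all of \(\cE_i\).
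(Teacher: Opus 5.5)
Your proposal is correct and follows the same reasoning the paper relies on: the paper gives no separate proof, but the observation rests on the representation \(M-M^*=A-\E_S[A]=\sum_{e\in\Eout}(H_e-p)B_e\) and the coordinate formula \((B_ev)_i=\sum_{j\in e\setminus\{i\}}v_j\) for \(i\in e\) (and \(0\) otherwise), exactly as in your argument. Your remark that the conclusions also hold conditionally on \(\cF^{(-i)}\) when \(v\) is only \(\cF^{(-i)}\)-measurable matches how the paper later applies this observation in Claim~\ref{clm:PS_E2_u_m}.
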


\begin{lemma}\label{lem:sum-ae2}
Fix \(i\in[n]\) and \(v\in\R^n\). For each \(d\)-set \(e\) with \(i\in e\), let \(a_e\) be as in \eqref{eq:def_ae}. Then
\begin{equation}\label{eq:l2-proxy}
\sum_{e\in\Eout_i} a_e^2 \wle \sum_{e \in \cE_i} a_e^2
\wle
(d-1)\binom{n-2}{d-2} \|v\|_2^2 .
\end{equation}
\end{lemma}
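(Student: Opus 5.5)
The first inequality is immediate: \(\Eout_i\subseteq\cE_i\) and every summand \(a_e^2\) is nonnegative, so enlarging the index set can only increase the sum. The substantive work is the second inequality, and the plan is to bound each \(a_e^2\) by Cauchy--Schwarz and then double count, in the same way as in Lemma~\ref{lem:single_edge_action_bounds_pub} and the variance computation \eqref{eq:sum_um_bound}.

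First, for each \(e\in\cE_i\), the set \(e\setminus\{i\}\) has exactly \(d-1\) elements. By Cauchy--Schwarz \eqref{ineq:cs},
\[
a_e^2=\Bigl(\sum_{j\in e\setminus\{i\}}v_j\Bigr)^2\le (d-1)\sum_{j\in e\setminus\{i\}}v_j^2 .
\]
Second, I sum over \(e\in\cE_i\) and interchange the order of summation:
\[
\sum_{e\in\cE_i}\sum_{j\in e\setminus\{i\}}v_j^2=\sum_{j\neq i}v_j^2\,\bigl|\{e\in\cE:\{i,j\}\subset e\}\bigr| .
\]
Third, for each fixed \(j\neq i\), the number of \(d\)-sets containing both \(i\) and \(j\) is exactly \(\binom{n-2}{d-2}\). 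Substituting this count gives
\[
\sum_{e\in\cE_i}a_e^2\le (d-1)\binom{n-2}{d-2}\sum_{j\neq i}v_j^2\le (d-1)\binom{n-2}{d-2}\|v\|_2^2,
\]
which is \eqref{eq:l2-proxy}.

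There is no real obstacle. The only point requiring care is the double-counting step: one must check that the index \(i\) itself never appears in the inner sum (it is excluded by the definition of \(a_e\) in \eqref{eq:def_ae}), so that every \(j\) counted satisfies \(j\neq i\) and the pair-overlap count \(\binom{n-2}{d-2}\) applies.
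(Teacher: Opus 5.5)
Your proof is correct and matches the paper's argument: Cauchy--Schwarz on each $a_e$ over the $d-1$ indices of $e\setminus\{i\}$, then a double count showing that each $j\neq i$ lies in exactly $\binom{n-2}{d-2}$ of the $d$-sets containing $i$. The paper counts $(d-1)$-subsets $T_e\subset[n]\setminus\{i\}$ rather than pair overlaps, which is the same count.
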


\begin{proof}
Since \(\Eout_i\subset \cE_i\), it suffices to bound \(\sum_{e\in\cE_i}a_e^2\).
For each \(e\in \cE_i\), let \(T_e\coloneqq e\setminus\{i\}\), so \(|T_e|=d-1\) and \(a_e=\sum_{j\in T_e}v_j\).
By Cauchy--Schwarz \eqref{ineq:cs},
\[
\left(\sum_{j\in T_e} v_j\right)^2
\wle
\left(\sum_{j\in T_e} 1^2\right)\left(\sum_{j\in T_e} v_j^2\right)
\weq
(d-1)\sum_{j\in T_e} v_j^2 .
\]
Summing over all \((d-1)\)-subsets \(T_e\subset[n]\setminus\{i\}\) and interchanging the order of summation,
\[
\sum_{e\in\cE_i}a_e^2
\wle
(d-1)\sum_{\substack{T_e\subset[n]\setminus\{i\}\\ |T_e|=d-1}}\ \sum_{j\in T_e}v_j^2
=
(d-1)\binom{n-2}{d-2}\sum_{j\neq i}v_j^2
\wle
(d-1)\binom{n-2}{d-2}\|v\|_2^2,
\]
since each \(j\neq i\) belongs to exactly \(\binom{n-2}{d-2}\) such sets \(T_e\).
\end{proof}

\begin{lemma}[Row concentration in a fixed direction]\label{lem:row-bernstein-general}
Fix \(i\in[n]\), \(v\in\R^n\), and \(c>0\). Let
\[
C\ge \max\left\{ \sqrt{2(c+1)}, \frac{2}{3}(c+1)\right\}.
\]
Then, with \(\P_S\)-probability at least \(1-2n^{-(c+1)}\),
\begin{equation}\label{eq:row-one-i-hp}
|(M-M^*)_{i:} v|
\le
 C\left(V_n\|v\|_2+K_n\|v\|_\infty\right).
\end{equation}
\end{lemma}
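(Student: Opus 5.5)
The plan is to apply the scalar Bernstein inequality (Lemma~\ref{lem:scalar-bernstein}) to the decomposition in Observation~\ref{obs:row_decomp_direction}. Under $\P_S$ we have $(M-M^*)_{i:}v=\sum_{e\in\Eout_i}Y_e$, where $Y_e=(H_e-p)a_e$ are independent and centered. The envelope \eqref{eq:row_envelope_short} gives $|Y_e|\le K\coloneqq (d-1)\|v\|_\infty$. Combining the variance formula \eqref{eq:row_varproxy_short} with Lemma~\ref{lem:sum-ae2} gives
\[
\sigma_i^2(v)\le \sigma^2\coloneqq p(1-p)(d-1)\binom{n-2}{d-2}\|v\|_2^2 .
\]
(Here $M^*$ is the $\P_S$-mean of $M$, and the diagonal entry contributes nothing since both matrices have zero diagonal, so the observation's decomposition is exact.)

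Next we fix $\ell\coloneqq (c+1)\log n$ and set $t\coloneqq \sqrt{2\sigma^2\ell}+\tfrac23 K\ell$. By Lemma~\ref{lem:bernstein_parameter_ineq}, $t^2\ge \ell(2\sigma^2+\tfrac23 Kt)$, so Bernstein yields
\[
\P_S\left(\left|\sum_{e\in\Eout_i}Y_e\right|\ge t\right)\le 2e^{-\ell}=2n^{-(c+1)} .
\]
Bernstein is stated with the exact variance sum, but the bound is monotone in $\sigma^2$, so replacing the exact variance by the upper bound $\sigma^2$ is legitimate.

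It remains to compare $t$ with the claimed radius. We have
\[
\sqrt{2\sigma^2\ell}=\sqrt{2(c+1)}\,\sqrt{p(1-p)(d-1)\binom{n-2}{d-2}\log n}\;\|v\|_2=\sqrt{2(c+1)}\,V_n\|v\|_2,
\]
and
\[
\tfrac23K\ell=\tfrac23(c+1)(d-1)\log n\,\|v\|_\infty=\tfrac23(c+1)K_n\|v\|_\infty .
\]
By the assumption on $C$, both coefficients are at most $C$, so $t\le C(V_n\|v\|_2+K_n\|v\|_\infty)$ and \eqref{eq:row-one-i-hp} follows. The only mild subtlety is the degenerate case $v=0$ (or $p\in\{0,1\}$), where $t=0$. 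There the sum is identically zero, so the inequality holds trivially and need not be covered by Bernstein. Apart from that, there is no real obstacle: the work was already done in Lemma~\ref{lem:sum-ae2}.
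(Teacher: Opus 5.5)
Your proposal is correct and follows the paper's proof essentially step for step: the same row decomposition from Observation~\ref{obs:row_decomp_direction}, the envelope $(d-1)\|v\|_\infty$, the variance bound via Lemma~\ref{lem:sum-ae2}, scalar Bernstein with $\ell=(c+1)\log n$ combined with Lemma~\ref{lem:bernstein_parameter_ineq}, and the final comparison of the two coefficients with $C$. Your extra remarks make explicit two points the paper leaves implicit: Bernstein may be applied with an upper bound on the variance sum because the tail bound is monotone in it, and the degenerate case $v=0$, where the sum is identically zero, needs no separate argument.
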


\begin{proof}
By Observation~\ref{obs:row_decomp_direction}, \((M-M^*)_{i:}v=\sum_{e\in\Eout_i}Y_e\) with \(\{Y_e\}\) independent and centered.
By \eqref{eq:row_envelope_short},
\[
|Y_e|\le (d-1)\|v\|_\infty=\frac{K_n}{\log n}\,\|v\|_\infty.
\]
Moreover, by \eqref{eq:row_varproxy_short} and Lemma~\ref{lem:sum-ae2},
\[
\sigma_i^2(v)
=p(1-p)\sum_{e\in\Eout_i}a_e^2
\le p(1-p)(d-1)\binom{n-2}{d-2}\,\|v\|_2^2
=\frac{V_n^2}{\log n}\,\|v\|_2^2.
\]
Applying Lemma~\ref{lem:scalar-bernstein} gives, for all \(t>0\),
\[
\P_S\left(|(M-M^*)_{i:} v|\ge t\right)
\le
2\exp\left(
-\frac{t^2}{2\frac{V_n^2}{\log n}\|v\|_2^2+\frac{2}{3}\frac{K_n}{\log n}\|v\|_\infty\, t}
\right).
\]
Set \(\ell\coloneqq (c+1)\log n\) and
\[
t\coloneqq \sqrt{2\left(\frac{V_n^2}{\log n}\|v\|_2^2\right)\ell}
+\frac{2}{3}\left(\frac{K_n}{\log n}\|v\|_\infty\right)\ell
=
\sqrt{2(c+1)}\,V_n\|v\|_2+\frac{2}{3}(c+1)\,K_n\|v\|_\infty.
\]
By Lemma~\ref{lem:bernstein_parameter_ineq}, the Bernstein exponent is at least \(\ell\), hence
\(\P_S(|(M-M^*)_{i:} v|\ge t)\le 2e^{-\ell}=2n^{-(c+1)}\).
Finally, by the choice of \(C\),
\[
t\le C\left(V_n\|v\|_2+K_n\|v\|_\infty\right),
\]
which yields \eqref{eq:row-one-i-hp}.
\end{proof}



\end{document}